\newtheorem{theorem}{Theorem}[section]
\newtheorem{corollary}[theorem]{Corollary}
\newtheorem{lemma}[theorem]{Lemma}
\newtheorem{proposition}[theorem]{Proposition}
\theoremstyle{definition}
\newtheorem{definition}[theorem]{Definition}
\theoremstyle{remark} \theoremstyle{remark}
\numberwithin{equation}{section}
\DeclareMathOperator*{\esssup}{\mathrm{ess\,sup}}
\newcommand{\Rd}{\mathds{R}^d}
\newcommand{\intRd}{\int\limits_{\mathds{R}^d}}
\newcommand{\intGamN}{\int\limits_{\Gamma_0}}
\newcommand{\suml}[1]{\sum\limits_{#1}}
\newcommand{\prodl}[1]{\prod\limits_{#1}}
\newcommand{\intl}[1]{\int\limits_{#1}}
\newcommand{\bs}{\backslash}
\newcommand{\cOne}{\langle c_1 \rangle}
\newcommand{\cTwo}{\langle c_2 \rangle}
\newcommand{\coMax}{c_1^{\text{max}}}
\newcommand{\phinorm}{\langle \phi \rangle}
\newcommand{\bbs}{\mathcal{B}_{bs}}
\newcommand{\Usigma}{\mathcal{U}^{\sigma}}
\newcommand{\Pexp}{\mathcal{P}_{\exp}}
\title[Random jumps and coalescence in the continuum]{Random jumps and coalescence in the continuum: evolution of states of an infinite system}
\author{Yuri Kozitsky}
\address{Instytut Matematyki, Uniwersytet Marii Curie-Sk{\l}odowskiej, 20-031 Lublin, Poland}
\email{jkozi@hektor.umcs.lublin.pl}
\author{Krzysztof Pilorz}
\address{Instytut Matematyki, Uniwersytet Marii Curie-Sk{\l}odowskiej, 20-031 Lublin, Poland}
\email{krzysztof.pilorz@poczta.umcs.lublin.pl}
\begin{document}

\subjclass[2010]{60K35; 82C22} \keywords{Coalescence; Arratia's
flow; Kolmogorov equation; jump dynamics; individual-based model;
configuration.}

\begin{abstract}

The dynamics of an infinite continuum system of randomly jumping and
coalescing point particles is studied. The states of the system are
probability measures on the corresponding configuration space
$\Gamma$ the evolution of which is constructed in the following way.
The evolution of observables $F_0\to F_t$ is obtained from a
Kolmogorov-type evolution equation. Then the
evolution of states $\mu_0\to \mu_t$ is defined by the relation
$\mu_0(F_t) =\mu_t(F_0)$ for $F_0$ belonging to a measure-defining
class of functions. The main result of the paper is the proof of the
existence of the evolution of this type for a bounded time horizon.
\end{abstract}
\maketitle

\section{Introduction}

The random motion of infinite systems in the course of which the
constituents can merge attracts considerable
attention. The Arratia flow introduced in \cite{Arrat} provides an
example of this kind. In recent years, it has been being extensively
studied, see \cite{Beres,Kovn,KovnR,LeJ} and the references therein.

In Arratia's model, an infinite number of Brownian particles move in
$\mathds{R}$ independently up to their collision, then merge and
move together as single particles. Correspondingly, the description
of this motion (and its modifications) is performed in terms of
stochastic (diffusion) processes. In this work, we propose an
alternative look at this kind of motion. Similarly as in the
Kawasaki model \cite{Kawasaki2, Kawasaki}, in the model which we
propose and study here point particles perform random jumps with
repulsion in $\mathds{R}^d$, $d\geq 1$. Additionally, two particles
(located at $x$ and $y$) can merge into a particle (located at $z$)
with intensity (probability per time) $c_1(x,y;z)$. Thereafter, this
new particle participates in the motion. The phase space of such a
system is the set $\Gamma$ of all locally finite configurations
$\gamma \subset \mathds{R}^d$, see \cite{Kawasaki2, Kawasaki, FKKK,
SpatialEcologicalModel, HarmonicAnalysis}, and the states of the
system are probability measures on $\Gamma$ the set of which will be
denoted as $\mathcal{P}(\Gamma)$. The description of their evolution
$\mu_0 \to \mu_t$ is based on the relation $\mu_t (F_0) = \mu_0
(F_t)$ where $F_0:\Gamma \to \mathds{R}$ is supposed to belong to a
measure-defining class of functions, $\mu(F) := \int F d \mu$ and
the evolution $F_0\to F_t$ is obtained by solving the Kolmogorov
equation
\begin{equation}\label{KE}
    \frac{d}{dt} F_t = LF_t, \qquad F_t|_{t=0} = F_0,
\end{equation}
in which the operator $L$ specifies the model, see \eqref{OperatorL}
below. The main result of this work (Theorem \ref{Th2}) is the
construction of the evolution of this kind for $t<T$ (with some
$T<\infty$) and $\mu_0$ belonging to a certain set of probability
measures on $\Gamma$. The basic aspects of this construction can be
outlined as follows. Let $\varOmega$ stand for the set of all
compactly supported continuous functions $\omega :\mathds{R}^d\to
(-1,0]$. Set
\begin{equation}
  \label{Co1}
  F^\omega (\gamma) = \prod_{x\in \gamma} (1+\omega (x)), \qquad \omega \in \varOmega.
\end{equation}
Then the collection $\{F^\omega:\omega \in \varOmega\}$ is a
measure-defining class. The set of measures $\mathcal{P}_{\rm
exp}\subset \mathcal{P}(\Gamma)$ with which we will work is
defined by the condition that its members enjoy the following
property: the map $\varOmega \ni \omega \mapsto \mu(F^\omega)\in
\mathds{R}$ can be continued to an exponential type entire function
on $L^1(\mathds{R}^d)$. Then, for $\mu\in \mathcal{P}_{\rm exp}$, we
set $B_\mu(\omega) = \mu (F^\omega)$ and derive $\widetilde{L}$ from
$L$ according to the rule $(\widetilde{L}B_\mu)(\omega) =
\mu(LF^\omega)$. Thereafter, we construct the evolution
$B_{\mu_0}\to B_t$ by solving the corresponding evolution equation.
The next (and the hardest) part of this scheme is to prove that $B_t
= B_{\mu_t}$ for a unique $\mu_t \in \mathcal{P}_{\rm exp}$.

In Section
\ref{Section:Preliminaries}, we outline the mathematical
background of the paper. In Section \ref{Section:Results}, we
introduce the model and present the results in the form of Theorems
\ref{Th1} and \ref{Th2}. Their proof is performed in Sections
\ref{Section:ProofTh1} and \ref{Section:ProofTh2}, respectively.

\section{Preliminaries}\label{Section:Preliminaries}
As mentioned above, we work with the phase space
\[
    \Gamma = \{\gamma \subset \Rd: |\Lambda \cap \gamma| < \infty \text{ for any compact } \Lambda \subset \Rd\},
\]
where $|\cdot|$ denotes cardinality. It is
equipped with the vague topology (see e.g., \cite{FKKK,HarmonicAnalysis}) and the
corresponding Borel $\sigma$-field $\mathcal{B}(\Gamma)$.  The set of all finite
configurations is denoted by $\Gamma_0$. It is the union of the sets $\Gamma^{(n)} = \{\gamma \in \Gamma: |\gamma| = n\}$, 
$n \in \mathds{N}_0$
that allows one to endow $\Gamma_0$ with the disjoint union topology
and with the corresponding Borel $\sigma$-field
$\mathcal{B}(\Gamma_0)$. The topology of each $\Gamma^{(n)}$ is
obtained from the Euclidean topology of $(\Rd)^n$ by the
symmetrization. Note that $\Gamma_0 \in \mathcal{B}(\Gamma)$.

It can be shown, cf. \cite{FKKK}, that a function $G: \Gamma_0
\rightarrow \mathds{R}$ is measurable if and only if there exists a
collection of symmetric Borel functions $G^{(n)}:(\Rd)^n \rightarrow
\mathds{R}$ such that, for any $n \in \mathds{N}$,
\begin{equation}\label{GamNRdFun}
    G(\eta) = G^{(n)}(x_1, \ldots, x_n), \qquad \eta = \{x_1, \ldots, x_n\}.
\end{equation}
\begin{definition}
  \label{Gdf}
A function $G:\Gamma_0\to \mathds{R}$ is said to have bounded
support if there exist a compact set $\Lambda \subset \Rd$ (spatial
support) and an integer $N \in \mathds{N}$ (quantitative bound) such
that $G(\eta) = 0$ whenever $\eta \cap \Lambda^c \neq \emptyset$ or
$|\eta| > N$. By $\bbs$ we denote the set of all bounded measurable
functions of bounded support.
\end{definition}
The Lebesgue-Poisson measure $\lambda$ on $\Gamma_0$ is defined by
the integrals
\begin{equation}\label{LPDef}
\intGamN G(\eta) \lambda (d\eta) = G(\emptyset) + \suml{n=1}^\infty
\frac{1}{n!} \intl{(\Rd)^n} G^{(n)}(x_1, x_2, \ldots, x_n) dx_1 dx_2
\ldots dx_n,
\end{equation}
holding for all $G \in \bbs$. For a measurable set $\Lambda \subset
\Rd$, we define $\Gamma_{\Lambda} = \{ \gamma \in \Gamma: \gamma \subset \Lambda\}$.
Clearly, $\Gamma_{\Lambda}\in \mathcal{B}(\Gamma)$. We endow
$\Gamma_{\Lambda}$ with the topology induced from the vague topology
of $\Gamma$, so that its Borel $\sigma$-field is
$\mathcal{B}(\Gamma_{\Lambda}) = \{A \cap \Gamma_{\Lambda}: A \in
\mathcal{B}(\Gamma)\}$. For a given measure $\mu\in
\mathcal{P}(\Gamma)$, we define its projection $\mu^{\Lambda}$  by
\begin{equation}\label{Projection}
    \mu^{\Lambda} (A) = \mu(p_{\Lambda}^{-1}(A)), \qquad A \in \mathcal{B}(\Gamma_\Lambda), \qquad p_\Lambda (\gamma) := \gamma\cap \Lambda.
\end{equation}
For each $\mu\in \mathcal{P}(\Gamma)$, we can set, see (\ref{Co1}),
\begin{equation}
  \label{o1}
  B_\mu (\omega) = \mu(F^\omega):= \int_{\Gamma}F^\omega (\gamma) \mu
  (d\gamma), \qquad \omega \in
  \varOmega.
\end{equation}
The collection $\{F^\omega: \omega \in \varOmega\}$ is a
measure-defining class in the sense that
$\mu(F^\omega)=\nu(F^\omega)$ holding for all $\omega \in \varOmega$
implies $\mu=\nu$ for each $\mu,\nu \in \mathcal{P}(\Gamma)$, see
\cite[page 426]{Kawasaki2}. Then the action of $L$ can be
transferred to $B_\mu$ by means of the rule
\begin{equation}
  \label{o2}
  (\widetilde{L}B_\mu)(\omega) = \mu(LF^\omega).
\end{equation}
This allows one to pass from \eqref{KE} to the following evolution equation
\begin{equation}
\label{o3} \frac{d}{dt} B_t = \widetilde{L}B_t, \qquad B_t|_{t=0} =
B_{\mu_0}, \quad \mu_0 \in \mathcal{P}_{\rm exp}.
\end{equation}
The advantage of using $\mathcal{P}_{\rm exp}$ is that, for each of
its members, the function $B_\mu$ admits the representation
\begin{eqnarray}
  \label{o4}
B_\mu (\omega) & = & 1 + \sum_{n=1}^\infty \frac{1}{n!}
\int_{(\mathds{R}^d)^n} k^{(n)}_\mu (x_1, \dots , x_n) \omega(x_1)
\cdots \omega(x_n)d x_1 \cdots d x_n \\[.2cm] \nonumber & = &
\int_{\Gamma_0} k_\mu (\eta)e(\omega; \eta)\lambda(d\eta).
\end{eqnarray}
Here every $k^{(n)}_\mu$ is a symmetric element of $L^\infty
((\mathds{R}^d)^n)$ such that
\begin{equation}
  \label{o5}
  \|k^{(n)}_\mu\|_{L^\infty
((\mathds{R}^d)^n)} \leq C^n, \qquad n \in \mathds{N},
\end{equation}
with one and the same $C>0$ for all $n\in \mathds{N}$. In the second line of (\ref{o4}), we
use the measure $\lambda$ introduced in (\ref{LPDef}), $k_\mu :
\Gamma_0 \to \mathds{R}$ is defined by $k_\mu (\eta) =
k^{(n)}_\mu (x_1, \dots , x_n))$ for $\eta = \{x_1, \dots , x_n\}$,
cf. (\ref{GamNRdFun}), and
\begin{equation*}
e(\omega; \eta) := \prod_{x\in \eta} \omega (x), \qquad \eta \in
\Gamma_0.
\end{equation*}
The function $k_\mu$ is called the correlation function of the state
$\mu$, whereas $k^{(n)}_\mu$ is its $n$-th order correlation
function. $k_\mu$ completely characterizes $\mu\in \mathcal{P}_{\rm
exp}$. For instance, $k_{\pi_\varrho}(\eta) = e(\varrho;\eta)$ for
the Poisson measure $\pi_\varrho$ with density
$\varrho:\mathds{R}^d\to [0,+\infty)$. On the other hand, the
following is known, see \cite[Theorems 6.1, 6.2 and Remark
6.3]{HarmonicAnalysis}.
\begin{proposition}\label{CorFunProp}
A function $k: \Gamma_0 \rightarrow \mathds{R}$ is a correlation
function of a unique measure $\mu \in \Pexp$ if and only if it
satisfies the conditions: (a) $k(\emptyset) = 1$; (b) the estimate
in (\ref{o5}) holds for some $C>0$ and all $n\in \mathds{N}$; (c)
for each $G\in \bbs^*$, the following holds
\begin{equation}
  \label{o7}
  \langle\! \langle G,k \rangle\!\rangle := \int_{\Gamma_0} G(\eta)
  k(\eta) \lambda(d \eta) \geq 0.
\end{equation}
\end{proposition}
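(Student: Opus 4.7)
The proof naturally splits into the (easy) necessity direction, the (hard) sufficiency direction, and uniqueness. The organizing device is the \emph{$K$-transform}, $(KG)(\gamma) := \sum_{\eta\Subset\gamma} G(\eta)$ for $G\in\bbs$, together with the duality identity
\begin{equation*}
\int_{\Gamma} (KG)(\gamma)\,\mu(d\gamma) \;=\; \int_{\Gamma_0} G(\eta)\, k_\mu(\eta)\,\lambda(d\eta),
\end{equation*}
which is the natural bridge between states on $\Gamma$ and correlation functions on $\Gamma_0$. The set $\bbs^*$ should be interpreted as $\{G\in\bbs : KG\geq 0\text{ pointwise on }\Gamma\}$.

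For necessity, assume $k=k_\mu$ with $\mu\in\Pexp$. Then (a) is immediate from $k_\mu^{(0)}=B_\mu(0)=1$, and (b) follows directly from the exponential type of $\omega\mapsto B_\mu(\omega)$ combined with Cauchy estimates applied to the $n$-linear expansion in \eqref{o4}. Condition (c) is the duality identity above: for $G\in\bbs^*$ the left-hand side is the integral of a non-negative function, hence non-negative.

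For sufficiency, given $k$ satisfying (a)--(c), I would construct $\mu$ as a projective limit of local measures. For each compact $\Lambda\subset\Rd$, define a candidate density on $\Gamma_\Lambda$ by the formal inverse $K$-transform,
\begin{equation*}
R^\Lambda(\eta)\;=\;\sum_{n=0}^{\infty} \frac{(-1)^n}{n!}\int_{\Lambda^n} k(\eta\cup\{y_1,\dots,y_n\})\,dy_1\cdots dy_n,\qquad \eta\in\Gamma_\Lambda.
\end{equation*}
The bound \eqref{o5} gives absolute convergence with $|R^\Lambda(\eta)|\leq C^{|\eta|} e^{C|\Lambda|}$, so $R^\Lambda$ is well-defined and integrable against $\lambda$ on $\Gamma_\Lambda$. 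One then sets $\mu^\Lambda(d\eta)=R^\Lambda(\eta)\,\lambda(d\eta)$. The Lebesgue--Poisson calculus shows by a direct computation that $\int_{\Gamma_\Lambda} R^\Lambda(\eta)\lambda(d\eta)=1$ (this uses only (a) and the convergence just established), and that the family $\{\mu^\Lambda\}$ is Kolmogorov-consistent in the sense of \eqref{Projection}: for $\Lambda'\subset\Lambda$, the pushforward of $\mu^\Lambda$ under $\eta\mapsto \eta\cap\Lambda'$ coincides with $\mu^{\Lambda'}$. Consistency is an algebraic identity on the $K$-transform side: plugging the definition of $R^\Lambda$ into the projection formula collapses the sum to $R^{\Lambda'}$.

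The main obstacle is proving $R^\Lambda\geq 0$ on $\Gamma_\Lambda$, i.e.\ that $\mu^\Lambda$ is actually a probability measure rather than merely a signed normalized measure. Here condition (c) must be used, and the strategy is to encode indicator functions of measurable sets $A\subset\Gamma_\Lambda$ (with a quantitative bound on particle number) as $K$-transforms of test functions in $\bbs^*$. Concretely, for such $A$ one constructs $G_{A,\Lambda}\in\bbs$ with $(KG_{A,\Lambda})(\gamma)=\mathds{1}_A(\gamma\cap\Lambda)\geq 0$, so $G_{A,\Lambda}\in\bbs^*$; the duality identity, once it is established for this $G$, rewrites $\int_A R^\Lambda\,d\lambda = \llangle G_{A,\Lambda},k\rrangle\geq 0$ by (c), forcing $R^\Lambda\geq 0$ $\lambda$-a.e. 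Finding the right $G_{A,\Lambda}$ and checking the required Fubini-type manipulations (the series in $R^\Lambda$ must be moved through $\lambda$-integration) is the technical heart of the argument. Once positivity is in hand, the Kolmogorov extension theorem yields a unique $\mu\in\mathcal{P}(\Gamma)$ with projections $\mu^\Lambda$; then applying the duality identity to $G=e(\omega;\cdot)$ for $\omega\in\varOmega$ supported in $\Lambda$ recovers \eqref{o4}, hence $\mu\in\Pexp$ and $k_\mu=k$. Uniqueness of $\mu$ is then automatic from the measure-defining property of $\{F^\omega:\omega\in\varOmega\}$ recalled after \eqref{o1}.
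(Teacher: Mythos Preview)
The paper does not prove this proposition; it is quoted verbatim from \cite[Theorems 6.1, 6.2 and Remark 6.3]{HarmonicAnalysis}, so there is no in-paper argument to compare against. Your sketch is the standard route taken in that reference: the $K$-transform duality gives necessity, and for sufficiency one builds the local densities $R^\Lambda$ by the alternating-sum inversion, checks consistency, and extracts positivity of $R^\Lambda$ from condition (c) by realizing local indicators as $K$-images of elements of $\bbs^*$. Two small points worth tightening if you write this out in full: first, the projective-limit step on $\Gamma$ is not the ordinary Kolmogorov theorem but its configuration-space analogue (this is where the topological structure of $\Gamma$ and the standard-Borel property of $\Gamma_\Lambda$ are used; see the discussion in \cite{HarmonicAnalysis}); second, the explicit formula for $G_{A,\Lambda}$ is the Möbius-type inverse $(K^{-1}\mathds{1}_A)(\eta)=\int_{\Gamma_\Lambda}(-1)^{|\xi|}\mathds{1}_A(\eta\cup\xi)\,\lambda(d\xi)$ restricted to $\Gamma_\Lambda$, and one should verify that the quantitative bound on $A$ (particle number $\leq N$) together with the exponential bound (b) is what makes this lie in $\bbs$ and justifies the Fubini interchange. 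With those caveats, your outline is correct and matches the cited proof.
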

Here
\begin{equation} \label{o8}
\bbs^*:= \{ G\in \bbs: (K G) (\eta) \geq 0\}, \qquad (KG)(\eta) :=
\sum_{\xi\subset \eta} G(\xi).
\end{equation}
Notably, the cone $\{G\in \bbs: G(\eta ) \geq 0\}$ is a proper
subset of $\bbs^*$.
\begin{corollary}
  \label{oco}
An exponential type entire function $B:L^1(\mathds{R})\to
\mathds{R}$ satisfies (\ref{o1}) for a unique $\mu\in
\mathcal{P}_{\rm exp}$ if and only if it admits the expansion as in
(\ref{o4}) with $k$ satisfying the conditions of Proposition
\ref{CorFunProp}.
\end{corollary}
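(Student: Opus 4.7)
The plan is to reduce the corollary to Proposition \ref{CorFunProp} by exploiting the one-to-one correspondence between entire functions $B$ of exponential type on $L^1(\Rd)$ and kernel sequences $\{k^{(n)}\}$ appearing in the expansion (\ref{o4}).

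For the forward direction ($\Rightarrow$), suppose $B=B_\mu$ for some $\mu\in\Pexp$. By the very definition of $\Pexp$ given in the introduction, $B_\mu$ extends to an exponential-type entire function on $L^1(\Rd)$ and admits the representation (\ref{o4}) with $k=k_\mu$. The required properties (a)--(c) for $k_\mu$ are then immediate: (a) is the normalization $k_\mu(\emptyset)=B_\mu(0)=1$; (b) is the uniform bound (\ref{o5}) built into the definition of $\Pexp$; and (c) follows from the standard identity $\llangle G, k_\mu \rrangle = \mu(KG)$, which is nonnegative on $\bbs^*$ by the very definition of this cone in (\ref{o8}).

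For the converse ($\Leftarrow$), given $B$ with the expansion (\ref{o4}) and $k$ satisfying (a)--(c), Proposition \ref{CorFunProp} yields a unique $\mu\in\Pexp$ whose correlation function is $k$. The corresponding $B_\mu$ admits the representation (\ref{o4}) with the same kernels $k^{(n)}$, hence $B=B_\mu$ as entire functions on $L^1(\Rd)$; in particular (\ref{o1}) holds for every $\omega\in\varOmega$. Uniqueness of such $\mu$ follows from the fact (noted after (\ref{o1})) that $\{F^\omega:\omega\in\varOmega\}$ is a measure-defining class, so any two measures in $\Pexp$ inducing the same $B$ must coincide on $\varOmega$ and hence as elements of $\mathcal{P}(\Gamma)$.

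The only potentially delicate point is the claim that the kernels $k^{(n)}$ are recovered uniquely from $B$ as a function on $L^1(\Rd)$. This is a standard consequence of Taylor's theorem for entire functions of exponential type on a Banach space: the $n$-th Gateaux derivative of $B$ at the origin is a bounded symmetric $n$-linear form whose kernel in $L^\infty((\Rd)^n)$ is uniquely determined by polarization together with the $L^\infty$--$L^1$ duality, with the bound (\ref{o5}) guaranteeing everything is well-defined. Thus no serious obstacle arises, and the corollary is essentially a repackaging of Proposition \ref{CorFunProp} at the level of the functions $B_\mu$.
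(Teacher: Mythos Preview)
The paper does not give a proof of this corollary at all; it is stated immediately after Proposition~\ref{CorFunProp} and the text moves on, treating it as a direct reformulation of that proposition at the level of the Bogoliubov functionals $B_\mu$. Your proposal correctly supplies the details of this reformulation and follows exactly the route one would expect: unpack the definition of $\mathcal{P}_{\rm exp}$ for the forward direction, invoke Proposition~\ref{CorFunProp} for the converse, and use the measure-defining property of $\{F^\omega\}$ together with the uniqueness of Taylor kernels for the uniqueness claim. There is nothing to correct.
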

Having in mind the latter facts we will look for the solutions of
(\ref{o3}) in the form
\begin{equation}
\label{O} B_t (\omega) = \langle \!\langle e(\omega;\cdot) ,
k_t\rangle \!\rangle
\end{equation}
with $k_t$ satisfying
\begin{equation}
  \label{CFE}
  \frac{d}{dt} k_t = L^\Delta k_t, \qquad k_t|_{t=0}= k_{\mu_0},
\end{equation}
where $L^\Delta$ is to be obtained from $\widetilde{L}$ (and thus from $L$) according to the rule, cf. (\ref{o2}) and
(\ref{O}),
\begin{equation}
  \label{o9}
(\widetilde{L}B_t)(\omega) = \langle \!\langle e(\omega;\cdot) ,
L^\Delta k_t \rangle \!\rangle
\end{equation}
For $\mu \in \Pexp$ and a compact $\Lambda \subset \Rd$, the projection of $\mu$ defined in \eqref{Projection}
is absolutely continuous with respect to the Lebesgue-Poisson measure $\lambda$. Let $R_\mu^\Lambda$ be its Radon-Nikodym derivative.
It is related to the correlation function $k_\mu$ by
\begin{equation}\label{CorAndDens}
    k_\mu(\eta) = \int\limits_{\Gamma_\Lambda} R_\mu^\Lambda(\eta \cup \xi) \lambda(d\xi), \quad \eta \in \Gamma_\Lambda.
\end{equation}
One of our tools in this work is based on the Minlos lemma
according to which, cf. \cite[eq. (2.2)]{FKKK},
\begin{equation*}
 \intGamN \intGamN G(\eta \cup \xi) H (\eta, \xi) \lambda(d\eta) \lambda(d\xi) =
 \intGamN G(\eta) \suml{\xi \subset \eta} H(\xi, \eta \bs \xi) \lambda(d\eta),
\end{equation*}
holding for appropriate  $G, H: \Gamma_0 \rightarrow \mathds{R}$.
By taking here
\[
    H(\eta_1, \eta_2) = \left\{ \begin{array}{ll} h(x, \eta_2), & \ \eta_1 = \{x\} \\ 0, & |\eta_1| \neq 1 \end{array} \right.
\]
and then by (\ref{LPDef}) we obtain its following special case
\begin{equation}\label{Minlos1}
\intGamN \intRd G(\eta \cup x) h(x,\eta) dx \lambda(d\eta) = \intGamN \suml{x \in \eta} G(\eta) h(x, \eta \bs x) \lambda(d \eta).
\end{equation}
Analogously, for
\[
H(\eta_1, \eta_2, \eta_3) = \left\{ \begin{array}{ll} h(x, y,
\eta_3), & \ \eta_1 = \{x\}, \eta_2 = \{y\} \\[.2cm] 0, & |\eta_1| \neq 1
\text{ or } |\eta_2| \neq 1, \end{array}  \right.
\]
one gets
\begin{eqnarray}\label{Minlos2}
& &\frac{1}{2} \intGamN \intRd \intRd G(\eta \cup \{x,y\}) h(x, y, \eta) dx dy \lambda(d\eta) \nonumber \\
& & \quad = \intGamN \suml{\{x, y\} \subset \eta} G(\eta) h(x, y, \eta \bs \{x, y\}) \lambda(d\eta).
\end{eqnarray}

\section{The Results}\label{Section:Results}

Our model is specified by the operator $L$ the action of which on an
observable $F: \Gamma \rightarrow \mathds{R}$ is 
\begin{gather}\label{OperatorL}
    (LF)(\gamma) = \suml{\{x,y\} \subset \gamma} \  \intRd c_1(x,y;z) \Big(F\big(\gamma \bs \{x,y\} \cup z \big) - F(\gamma) \Big) dz
    \\[.2cm]
    + \suml{x \in \gamma} \  \intRd \tilde{c}_2(x,y;\gamma) \Big(F\big(\gamma \bs x \cup y\big) - F(\gamma) \Big) dy. \nonumber
\end{gather}
Here $c_1\geq 0$ is the intensity of the coalescence of the
particles located at $x$ and $y$ into a new particle located at $z$.
Note that $c_1$ does not depend on the elements of $\gamma$ other
than $x$ and $y$. For simplicity, we assume that $c_1(x,y;z) =
c_1(y,x;z)= c_1 (x+u, y+u; z+u)$ for all $u\in \mathds{R}^d$. For a
more general version of this model, see \cite{RepulsiveCoalescence}.
The second summand in (\ref{OperatorL}) describes jumps performed by
the particles. As in \cite{Kawasaki2,Kawasaki}, we set
\[
    \tilde{c}_2(x,y;\gamma) = c_2(x-y) \prodl{u \in \gamma \bs x} e^{-\phi(y-u)} ,
\]
with $\phi$ and $c_2$ being the repulsion potential and the jump
kernel, respectively. By these assumptions the model is translation
invariant. The functions $c_1$, $c_2$ and $\phi$ take non-negative
values and satisfy the following conditions:
\begin{gather} \label{ModelAssumptions}
    \int\limits_{(\mathds{R}^d)^2} c_1(x_1,x_2;x_3) dx_i dx_j = \cOne \ < \infty,
    \\[.2cm]
    \coMax := \sup_{x, y \in \Rd} \intRd c_1(x,y;z) dz < \infty \nonumber,
    \\[.2cm]
    \cTwo \ :=\intRd  c_2(x) dx   < \infty , \quad
    \phinorm := \intRd \phi(x) dx < \infty \nonumber,
    \\[.2cm]
    |\phi| := \sup_{x \in \Rd} \phi(x) < \infty \nonumber.
\end{gather}
Now we pass to the equation in \eqref{CFE}. The corresponding operator $L^\Delta$ is to be calculated from (\ref{OperatorL})
by (\ref{o2}) and (\ref{o9}). It thus takes the form, cf. \cite{RepulsiveCoalescence},
\begin{equation*}
    L^\Delta = L_1^\Delta + L_2^\Delta,
\end{equation*}
where $L_1^\Delta = L_{11}^\Delta + L_{12}^\Delta + L_{13}^\Delta + L_{14}^\Delta$ is the part responsible for
the coalescence whereas $L_2^\Delta = L_{21}^\Delta + L_{22}^\Delta$ describes the jumps. Their summands are:
\begin{gather}
\label{O1}
    (L^{\Delta}_{11} k) (\eta) = \ \ \frac{1}{2} \int\limits_{(\mathds{R}^d)^2} \sum\limits_{z \in \eta} c_1(x,y;z) k(\eta \backslash z \cup \{x,y\})  dx dy,
    \\[.2cm] \nonumber
    (L^{\Delta}_{12} k) (\eta) = - \frac{1}{2} \int\limits_{(\mathds{R}^d)^2} \sum\limits_{x \in \eta} c_1(x,y;z) k(\eta \cup y)  dy dz,
    \\[.2cm] \nonumber
    (L^{\Delta}_{13} k )(\eta) = - \frac{1}{2} \int\limits_{(\mathds{R}^d)^2} \sum\limits_{y \in \eta} c_1(x,y;z) k(\eta \cup x)  dx dz,
    \\[.2cm] \nonumber
    (L^{\Delta}_{14} k )(\eta) =  - \Psi (\eta) k(\eta), \qquad \Psi(\eta):= \int\limits_{\mathds{R}^d}
    \sum\limits_{\{x,y\} \subset \eta} c_1(x,y;z) dz,
\end{gather}
and
\begin{align*}
    L^{\Delta}_{21} k(\eta) &= \intRd \sum\limits_{y \in \eta}  c_2(x-y) \prodl{u \in \eta \backslash y} e^{-\phi(y-u)} \ (Q_y k)(\eta \backslash y \cup x) dx,
    \\[.2cm]
    L^{\Delta}_{22} k(\eta) &= - \intRd \sum\limits_{x \in \eta} c_2(x-y) \prodl{u \in \eta \backslash x} e^{-\phi(y-u)} \ (Q_y k)(\eta) dy,
\end{align*}
where
\begin{align} \label{QyNotion}
    (Q_y k)(\eta) = \intGamN k (\eta \cup \xi) \prodl{u \in \xi}  (e^{-\phi(y-u)} - 1) \lambda (d\xi).
\end{align}
In view of (\ref{o5}), the Banach spaces for \eqref{CFE} ought to be
of $L^\infty$ type. Thus, we set
\begin{equation}\label{KSpace}
    \mathcal{K}_\theta = \left\{k: \Gamma_0 \rightarrow \mathds{R}: \|k\|_\theta < \infty \right\}, \quad \theta \in \mathds{R},
\end{equation}
with
\[
    \|k\|_\theta = \esssup_{\eta \in \Gamma_0} \left(|k(\eta)| e^{- \theta |\eta|} \right) = \sup_{n\geq 0}\left( e^{-\theta n}
    \|k^{(n)}\|_{L^\infty((\mathds{R}^n))}\right).
\]
By this definition it follows that each $k \in \mathcal{K}_\theta$
satisfies
\begin{equation}\label{KNormEstimate}
    |k(\eta)| \leq e^{\theta |\eta |} \|k\|_\theta.
\end{equation}
With the help of this estimate and (\ref{O1}), we show that the first three summands in
$L^\Delta_1$ satisfy
\begin{equation}
 \label{O2}
 \left|(L^\Delta_{1i}k)(\eta)\right| \leq \left(\frac{1}{2} \langle c_1 \rangle e^\theta \|k\|_\theta \right) |\eta| e^{\theta |\eta|}, \quad i=1,2,3.
\end{equation}
At the same time, $\Psi(\eta) \leq c_1^{\rm max} |\eta|(|\eta|-1)/2$, which yields the following estimate
\begin{equation}
 \label{O3}
 \left|(L^\Delta_{14}k)(\eta)\right| \leq \left(\frac{1}{2}  c_1^{\rm max}  \|k\|_\theta \right) |\eta|(|\eta|-1)e^{\theta |\eta|}.
\end{equation}
Since $L^\Delta_2$ coincides with the corresponding operator of the
Kawasaki model, by \cite[eq. (3.18)]{Kawasaki2} we have
\begin{equation}
 \label{O4}
\left|(L^\Delta_{2}k)(\eta)\right| \leq \left(2 \langle c_2 \rangle \exp\left(\langle \phi \rangle e^\theta \right)\|k\|_\theta \right)|\eta| e^{\theta |\eta|}.
\end{equation}
Let us now define $L^\Delta$ in a given $\mathcal{K}_\theta$. To this end, we set
\begin{equation}
 \label{o10}
\mathcal{D}_\theta = \{ k \in \mathcal{K}_\theta: \exists C_k>0 \ |\eta|^2 |k(\eta)| \leq C_k e^{\theta |\eta|}\}, \qquad \theta \in \mathds{R}.
\end{equation}
Then, similarly as in (\ref{O2}) -- (\ref{O4}), we obtain that both
$L^\Delta_1$ and $L^\Delta_2$ map the elements of
$\mathcal{D}_\theta$ into $\mathcal{K}_\theta$. Let
$L^\Delta_\theta$ denote the operator $(L^\Delta,
\mathcal{D}_\theta)$. Then, in the Banach space
$\mathcal{K}_\theta$, the problem in (\ref{CFE}) takes the form
\begin{equation}
 \label{OCFE}
 \frac{d}{dt} k_t = L^\Delta_\theta k_t, \qquad k_{t}|_{t=0} = k_0.
\end{equation}
\begin{definition}
 \label{Odf}
A classical solution of (\ref{OCFE}) on a given time interval
$[0,T)$ is a continuous function $[0,T)\ni t \mapsto k_t \in
\mathcal{D}_\theta$ that is continuously differentiable in
$\mathcal{K}_\theta$ on $(0,T)$ and is such that both equalities in
(\ref{OCFE}) hold.
\end{definition}
As is typical for problems like in (\ref{OCFE}), in view of the
complex character of the corresponding operator it might be
unrealistic to expect the existence of classical solutions for all
possible $k_0\in \mathcal{D}_\theta$. Thus, we will restrict the
choice of $k_0$ to a proper subset of the domain (\ref{o10}). For
$\theta' > \theta$, we have that $\mathcal{K}_\theta \hookrightarrow
\mathcal{K}_{\theta'}$, i.e., $\mathcal{K}_\theta$ is continuously
embedded in $\mathcal{K}_{\theta'}$. Similarly as in
\cite{Kawasaki2,Kawasaki,FKKK} we will solve (\ref{OCFE}) in the
scale $\{ \mathcal{K}_\theta\}_{\theta\in \mathds{R}}$. By means of
the estimates in (\ref{O2}) -- (\ref{O4}) one concludes that
$L^\Delta$ can be defined as a bounded linear operator from
$\mathcal{K}_{\theta}$ to $\mathcal{K}_{\theta'}$ whenever $\theta'
> \theta$. We shall denote this operator by $L^\Delta_{\theta'\theta}$. By this estimate one also gets that
\begin{equation}
 \label{o11}
 \mathcal{K}_{\theta} \subset \mathcal{D}_{\theta'}, \qquad
 \theta'>\theta,
\end{equation}
and
\begin{equation}
  \label{O5}
  L^\Delta_{\theta'} k =  L^\Delta_{\theta' \theta} k, \qquad k\in
  \mathcal{K}_\theta.
\end{equation}
Set
\begin{gather}
  \label{o26}
  \beta (\theta)  =  \frac{3}{2}\langle c_1 \rangle e^\theta + 2\exp\left( \langle \phi\rangle e^\theta\right) \langle c_2
  \rangle, \\[.2cm] \nonumber
    T(\theta', \theta)  =  \frac{\theta' - \theta}{\beta(\theta')}, \qquad \theta'>\theta.
\end{gather}
\begin{theorem}\label{Th1}
For each $\alpha_0 \in \mathds{R}$ and $\alpha_* > \alpha_0$, and
for an arbitrary $k_0 \in \mathcal{K}_{\alpha_0}$, the problem in
\eqref{OCFE} has a unique classical solution $k_t \in
\mathcal{K}_{\alpha_*}$ on $[0, T (\alpha_*, \alpha_0))$.
\end{theorem}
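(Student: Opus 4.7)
The plan is to write $L^\Delta_\theta = A_\theta + B_\theta$, where $A_\theta k = -\Psi k$ (the summand $L^\Delta_{14}$) and $B_\theta = L^\Delta_{11} + L^\Delta_{12} + L^\Delta_{13} + L^\Delta_2$, and then to solve the equation by an Ovsyannikov-type iteration in the scale $\{\mathcal{K}_\theta\}$ with the dissipative $A$ absorbed into a contraction semigroup. Since $\Psi(\eta) \geq 0$, the multiplication $(S_\theta(t) k)(\eta) := e^{-t \Psi(\eta)} k(\eta)$ is a pointwise contraction on $\mathcal{K}_\theta$ for every $t \geq 0$ and every $\theta$, and $(S_\theta(t))_{t\geq 0}$ is a positive strongly continuous semigroup with generator $(A_\theta, \mathcal{D}_\theta)$; this is the only place where the quadratic-in-$|\eta|$ growth of $L^\Delta_{14}$ enters, and it is neutralised by the sign of $-\Psi$.

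For the remaining part $B$, the estimates (\ref{O2}) and (\ref{O4}), combined with the elementary inequality $n e^{-\delta n} \leq (e\delta)^{-1}$ valid for $n \geq 0$ and $\delta > 0$, should yield
\begin{equation*}
\|B_{\theta'\theta} k\|_{\theta'} \leq \frac{\beta(\theta')}{e(\theta' - \theta)} \, \|k\|_\theta, \qquad \alpha_0 \leq \theta < \theta' \leq \alpha_*,
\end{equation*}
with $\beta$ as in (\ref{o26}); in particular $B$ extends to a bounded linear map $\mathcal{K}_\theta \to \mathcal{K}_{\theta'}$ for each such pair. I then recast (\ref{OCFE}) as the Duhamel equation
\begin{equation*}
k_t = S(t) k_0 + \int_0^t S(t-s) B k_s \, ds,
\end{equation*}
where $S$ acts on the appropriate level of the scale, and solve it by Picard iteration.

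For fixed $\theta' \in (\alpha_0, \alpha_*)$ and $n \in \mathds{N}$, take the equal partition $\alpha_0 = \theta_0 < \theta_1 < \cdots < \theta_n = \theta'$ with step $(\theta' - \alpha_0)/n$. Using the contractivity of each $S_{\theta_j}$ and the operator-norm estimate above, the $n$-th Picard iterate in $\mathcal{K}_{\theta'}$ is bounded by
\begin{equation*}
\frac{t^n}{n!} \prod_{j=1}^n \frac{\beta(\theta_j)}{e(\theta_j - \theta_{j-1})} \, \|k_0\|_{\alpha_0} \leq \frac{n^n}{n!\, e^n} \left( \frac{t \, \beta(\theta')}{\theta' - \alpha_0} \right)^n \|k_0\|_{\alpha_0}.
\end{equation*}
By Stirling's asymptotic $n^n/n! \sim e^n/\sqrt{2\pi n}$, the series converges whenever $t \beta(\theta') < \theta' - \alpha_0$. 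Letting $\theta' \uparrow \alpha_*$ produces a mild solution $k_t \in \mathcal{K}_{\theta'}$ for every $\theta' < \alpha_*$ on $[0, T(\alpha_*, \alpha_0))$. Because $\mathcal{K}_{\theta'} \subset \mathcal{D}_{\alpha_*}$ by (\ref{o11}), the right-hand side of Duhamel's equation is differentiable in $\mathcal{K}_{\alpha_*}$, and the derivative equals $L^\Delta_{\alpha_*} k_t$; this upgrades the mild to a classical solution in the sense of Definition \ref{Odf}. Uniqueness follows by feeding the difference of two putative solutions through the same estimate and iterating up to time $T(\alpha_*, \alpha_0)$.

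The main obstacle is interlacing the unbounded dissipative operator $A$ with the scale-of-spaces bound for $B$ in the Dyson product so that contractivity of $S_\theta(t)$ at every level of the scale makes the norms factorise cleanly and the Stirling combinatorics reproduces exactly the radius $T(\alpha_*, \alpha_0) = (\alpha_* - \alpha_0)/\beta(\alpha_*)$. A subsidiary but non-trivial point is verifying that the mild solution lies in the domain $\mathcal{D}_{\alpha_*}$ rather than merely in $\mathcal{K}_{\alpha_*}$: this is what forces the construction to pass through an intermediate space $\mathcal{K}_{\theta'}$ with $\theta' < \alpha_*$ and then to invoke the embedding (\ref{o11}).
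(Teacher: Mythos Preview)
Your approach is essentially the paper's: the same splitting $L^\Delta = A + B$ with $A = L^\Delta_{14}$, the same multiplication operator $e^{-t\Psi}$, the same Dyson/Picard expansion across the scale with an $n$-fold partition of $(\alpha_0,\alpha_*)$, the same Stirling combinatorics producing the radius $T(\alpha_*,\alpha_0)$, and the same upgrade from mild to classical via the embedding $\mathcal{K}_{\theta'} \subset \mathcal{D}_{\alpha_*}$ for $\theta'<\alpha_*$.

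One technical slip worth fixing: your assertion that $(S_\theta(t))_{t\ge 0}$ is a strongly continuous semigroup on the fixed space $\mathcal{K}_\theta$ is false. Since $\mathcal{K}_\theta$ is an $L^\infty$-type space and $\Psi$ is unbounded, for $k$ with $|k(\eta)|=e^{\theta|\eta|}$ one has $\|S_\theta(t)k-k\|_\theta=\sup_\eta(1-e^{-t\Psi(\eta)})$, which does not tend to $0$ as $t\to 0^+$. The paper never claims this; it defines $S$ only as a map $S_{\theta'\theta}(t):\mathcal{K}_\theta\to\mathcal{K}_{\theta'}$ with $\theta'>\theta$ and obtains continuity and differentiability in $t$ from the bound $|\eta|^2 e^{-(\theta'-\theta)|\eta|}\le 4/(e(\theta'-\theta))^2$, see (\ref{o22}) and (\ref{SPrime}). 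Your Picard iteration already works entirely across the scale, and your differentiability argument uses only $k_t\in\mathcal{K}_{\theta'}\subset\mathcal{D}_{\alpha_*}$, so the error is cosmetic and does not affect the substance of the proof; simply delete the $C_0$ claim on a single $\mathcal{K}_\theta$ and phrase the regularity of $S$ across levels of the scale, as you in effect already do.
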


A priori the solution $k_t$ described in Theorem \ref{Th1} need not
be a correlation function of any state, which means that the result
stated therein has no direct relation to the evolution of states of
the system considered. Our next result removes this drawback.
\begin{theorem}\label{Th2}
Let $\mu_0 \in \Pexp$ be such that $k_{\mu_0}\in
\mathcal{K}_{\alpha_0}$. Then, for each $\alpha_*>\alpha_0$, the
evolution $k_{\mu_0}\to k_t$ described in Theorem \ref{Th1} has the
property: for each $t< T(\alpha_*,\alpha_0)/2$, $k_t$ is the
correlation function of a unique state $\mu_t \in \Pexp$.
\end{theorem}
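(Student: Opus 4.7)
The plan is to verify the three hypotheses of Proposition \ref{CorFunProp} for the solution $k_t$ produced by Theorem \ref{Th1}. Uniqueness of $\mu_t \in \Pexp$ will then follow from the uniqueness clause of that same proposition.

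Conditions (a) and (b) are more or less automatic. For (a), each summand of $L^{\Delta}$ appearing in (\ref{O1}) and in the formulas for $L^{\Delta}_{21}$, $L^{\Delta}_{22}$ contains a sum over elements of $\eta$ and hence vanishes at $\eta=\emptyset$; thus $t\mapsto k_t(\emptyset)$ is constant along (\ref{OCFE}) and equals $k_{\mu_0}(\emptyset)=1$. For (b), membership $k_t\in\mathcal{K}_{\alpha_*}$ combined with (\ref{KNormEstimate}) yields $\|k_t^{(n)}\|_{L^\infty}\leq \|k_t\|_{\alpha_*}\,e^{\alpha_* n}$, so (\ref{o5}) holds with any $C$ larger than $e^{\alpha_*}\max(1,\|k_t\|_{\alpha_*})$.

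The substance of the proof is the positivity (\ref{o7}) for $k_t$. I would establish it by finite-volume approximation. Fix an exhaustion $\Lambda_n\nearrow\Rd$ by compacts and replace $L$ by the cutoff $L^{\Lambda_n}$ obtained by restricting the free integration variables $z$ in the coalescence term and $y$ in the jump term of (\ref{OperatorL}) to lie in $\Lambda_n$. Then $L^{\Lambda_n}$ is the generator of a bona-fide Markov jump process on the Polish space $\Gamma_{\Lambda_n}$. Starting from the projection $\mu_0^{\Lambda_n}$ defined in (\ref{Projection}), the Kolmogorov forward equation produces a non-negative density $R_t^{\Lambda_n}$ against $\lambda$ with total mass one. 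Defining $k_t^{\Lambda_n}$ from $R_t^{\Lambda_n}$ via (\ref{CorAndDens}) and using the Minlos identities (\ref{Minlos1})--(\ref{Minlos2}), one checks that $k_t^{\Lambda_n}$ solves the finite-volume analog of (\ref{OCFE}). Since the probability measure $\mu_t^{\Lambda_n}$ associated with $R_t^{\Lambda_n}$ (extended by zero outside $\Gamma_{\Lambda_n}$) is concentrated on configurations inside a fixed compact and therefore lies in $\Pexp$, Proposition \ref{CorFunProp} yields $\langle\!\langle G, k_t^{\Lambda_n}\rangle\!\rangle\geq 0$ for every $G\in\bbs^*$.

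The decisive step is then to show that $k_t^{\Lambda_n}\to k_t$ in a sense strong enough to pass to the limit in $\langle\!\langle G,k_t^{\Lambda_n}\rangle\!\rangle$ for $G$ of bounded support. I would first establish uniform-in-$n$ bounds on $k_t^{\Lambda_n}$ in the scale $\{\mathcal{K}_\theta\}$ by the same Ovsyannikov-type fixed-point argument that underlies Theorem \ref{Th1}, and then estimate the difference $k_t-k_t^{\Lambda_n}$ in $\mathcal{K}_{\alpha_*}$ by treating both as fixed points of the relevant integral equations and controlling $L^{\Delta,\Lambda_n}-L^{\Delta}$ as a bounded operator from $\mathcal{K}_\theta$ to $\mathcal{K}_{\alpha_*}$ for some intermediate $\theta\in(\alpha_0,\alpha_*)$. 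The halving $T(\alpha_*,\alpha_0)\to T(\alpha_*,\alpha_0)/2$ reflects precisely that this argument requires two solves in the scale—one bounding $k_t^{\Lambda_n}$ itself and one bounding the difference $k_t-k_t^{\Lambda_n}$—each of which consumes a strip of width $(\alpha_*-\alpha_0)/2$ and therefore runs for time $T(\alpha_*,\alpha_0)/2$, joined through the intermediate space $\mathcal{K}_{(\alpha_0+\alpha_*)/2}$.

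The principal obstacle will be this last convergence step: designing the cutoff $L^{\Lambda_n}$ so that it is simultaneously Markovian on $\Gamma_{\Lambda_n}$ (so that $R_t^{\Lambda_n}\geq 0$ and the finite-volume version of Proposition \ref{CorFunProp} applies) and quantitatively close to $L^{\Delta}$ in the scale, with a rate that upgrades the pointwise convergence of $k_t^{\Lambda_n}$ to convergence of the pairings $\langle\!\langle G,k_t^{\Lambda_n}\rangle\!\rangle$ for each $G\in\bbs^*$ of bounded support. Once this is in hand, passing to the limit in $\langle\!\langle G,k_t^{\Lambda_n}\rangle\!\rangle\geq 0$ gives (\ref{o7}) for $k_t$, and Proposition \ref{CorFunProp} produces the unique $\mu_t\in\Pexp$ with $k_{\mu_t}=k_t$.
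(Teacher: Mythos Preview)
Your overall architecture---verify (a) and (b) directly, then obtain (c) by approximating with genuine finite-system correlation functions and passing to the limit---matches the paper. But two of your proposed implementation steps will not go through as written.

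\textbf{The convergence step.} You propose to show $k_t^{\Lambda_n}\to k_t$ by estimating the difference in $\mathcal{K}_{\alpha_*}$, controlling $L^{\Delta,\Lambda_n}-L^{\Delta}$ as a bounded operator between $\mathcal{K}$-spaces. This operator difference does \emph{not} tend to zero in the $L^\infty$-type norm. For instance, $(L_{11}^{\Delta}-L_{11}^{\Delta,\Lambda_n})k$ evaluated at $\eta=\{z_0\}$ with $z_0\in\Lambda_n^c$ equals $\tfrac12\int c_1(x,y;z_0)k(\{x,y\})\,dx\,dy$, which by translation invariance of $c_1$ does not decay as $z_0\to\infty$. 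What is actually needed---and what the paper does---is only the weak convergence $\langle\!\langle G,k_t^{\Lambda_n}\rangle\!\rangle\to\langle\!\langle G,k_t\rangle\!\rangle$ for $G\in\bbs$, and this is obtained by constructing the \emph{predual} evolution $G\mapsto H_{\alpha_0\alpha_*}(t)G$ in the $L^1$-type spaces $\mathcal{G}_\theta$ (Lemma \ref{Slm}) and then applying dominated convergence. The halving of the time horizon arises in exactly this step, from a Duhamel identity that requires simultaneously $t<T(\alpha_1,\alpha_0)$ and $t<T(\alpha_*,\alpha_2)$ for intermediate $\alpha_1<\alpha_2$.

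\textbf{The identification step.} You take for granted that the finite-volume solution of the $\mathcal{K}$-space problem coincides with the correlation function $q_t$ computed from the local density $R_t$ via (\ref{CorAndDens}). This is not automatic: $k_t^{\Lambda_n}$ lives in $\mathcal{K}_{\alpha_*}$ while $q_t$ lives in $\mathcal{G}^{fac}_\theta$, and a hard cutoff to $\Lambda_n$ does \emph{not} place the two in a common space---for $u$ supported on $\Gamma_{\Lambda_n}$ with $|u(\eta)|\le e^{\theta|\eta|}$ one gets $|u|_{\mathrm{fac},\theta'}\le \sum_n (e^{\theta+\theta'}|\Lambda_n|)^n=\infty$. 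The paper resolves this by introducing an additional soft cutoff $\psi_\sigma(x)=e^{-\sigma|x|^2}$ on the operator; the integrability of $\psi_\sigma$ yields the embedding $\mathcal{U}^\sigma_\theta\hookrightarrow\mathcal{G}^{fac}_{\theta'}$ (see (\ref{USubsetG})), and uniqueness in this common space forces $k_t^{\Lambda,N}=q_t^{\Lambda,N}$ (Corollary \ref{Aco}). A further $N$-truncation of the initial density (\ref{DensExt}) is needed to place the initial datum in the right domain for the Thieme--Voigt construction of the stochastic semigroup. One then removes the approximations in the order $N\to\infty$, $\Lambda\nearrow\Rd$, $\sigma\to 0^+$.
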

By Theorem \ref{Th2} we  also have the evolution $B_{\mu_0}
\to B_t= B_{\mu_t} = \langle\!\langle e(\cdot;\cdot),
k_t\rangle\!\rangle$, where $B_t$ solves (\ref{o3}), cf. (\ref{O}).
Along with its purely theoretical value, this result may serve as a
starting point for a numerical study of the random motion of this
type, cf. \cite{Omel}, including its consideration at different space
and time scales \cite{Banas,Presutti}. To this end one can use kinetic equations related 
to the model specified in (\ref{OperatorL}), 
see \cite{RepulsiveCoalescence}.

\section{Proof of theorem \ref{Th1}}\label{Section:ProofTh1}

The solution in question will be obtained in the form
\begin{equation}
  \label{SiQ}
 k_t = Q_{\alpha_*\alpha_0}(t) k_0,
\end{equation}
where the family of bounded operators $Q_{\alpha_*\alpha_0}(t):
\mathcal{K}_{\alpha_0} \to \mathcal{D}_{\alpha_*} \subset
\mathcal{K}_{\alpha_*}$, $t\in(0, T(\alpha_*, \alpha_0))$ satisfies 
\begin{equation}
  \label{S1}
  \frac{d}{dt} Q_{\alpha_*\alpha_0}(t) = L^\Delta_{\alpha_*}
  Q_{\alpha_*\alpha_0}(t).
\end{equation}
Additionally, $Q_{\alpha_*\alpha_0}(0)$ is considered as the embedding operator, and 
hence $k_t$ given in (\ref{SiQ}) satisfies the initial condition up to this embeding.
Each $Q_{\alpha_*\alpha_0}(t)$ is constructed as a series of
$t$-dependent operators, convergent in the operator norm topology for $t<
T(\alpha_*, \alpha_0)$. In estimating the norms of these operators
we crucially use (\ref{O2}) -- (\ref{O4}).

As the right-hand sides of (\ref{O2}) and (\ref{O3}) contain
different powers of $|\eta|$, it is convenient to split $L^\Delta= A
+ B$ with $A = L^\Delta_{14}$. By $A_\theta$ and $B_\theta$ we
denote the unbounded operators $(A,\mathcal{D}_\theta)$ and
$(B,\mathcal{D}_\theta)$, respectively. Likewise, we introduce
$A_{\theta'\theta}$ and $B_{\theta'\theta}$, $\theta'>\theta$. Their
operator norms are to be estimated by means of
(\ref{O2}) -- (\ref{O4}) and the following inequalities
\begin{equation*}
    x e^{-ax} \leq \frac{1}{ae}, \qquad     x^2 e^{-ax} \leq \frac{4}{(ae)^2}, \qquad a >
    0.
\end{equation*}
After some calculations we then get
\begin{equation}
  \label{O10}
  \|A_{\theta'\theta}\| \leq \frac{2c_1^{\rm max}}{e^2
  (\theta'-\theta)^2}, \qquad  \|B_{\theta'\theta}\| \leq \frac{\beta(\theta)}{e
  (\theta'-\theta)},
\end{equation}
with $\beta(\theta)$ given in (\ref{o26}). Now, for  $\theta' >
\theta$  and $t>0$, we define a bounded linear (multiplication)
operator $S_{\theta' \theta}(t):\mathcal{K}_{\theta}\to
\mathcal{K}_{\theta'}$ by the formula
\begin{equation}
  \label{o21}
(S_{\theta' \theta}(t) k)(\eta) = e^{- \Psi(\eta) t} k(\eta),
\end{equation}
and by $S_{\theta' \theta}(0)$ we will mean the corresponding
embedding operator. Then, for each $k\in \mathcal{K}_{\theta}$, the
map $[0,+\infty) \ni t \mapsto S_{\theta' \theta}(t) k \in
\mathcal{K}_{\theta}$ is continuous since
\begin{equation}
  \label{o22}
    \|S_{\theta' \theta} (t) k - S_{\theta' \theta} (t') k \|_{\theta'}
    \leq |t-t'| \cdot \frac{2 \coMax \|k\|_\theta}{(\theta' - \theta)^2
    e^2},
\end{equation}
that readily follows by (\ref{O10}). Note that the multiplication
operator by $\exp(-t \Psi)$ acts from $\mathcal{K}_{\theta}$ to
$\mathcal{K}_{\theta}$ for any $\theta$; hence,
$S_{\theta'\theta}(t): \mathcal{K}_\theta \to
\mathcal{D}_{\theta'}$, see (\ref{o11}). We define it, however, as
above in order to have the continuity secured by the estimate in
(\ref{o22}). By (\ref{o21}), for any $\theta'' \in (\theta,
\theta')$, we have that
\begin{equation}\label{SPrime}
    \frac{d}{d t}S_{\theta' \theta}(t) = A_{\theta'\theta''} S_{\theta''
    \theta}(t) =  A_{\theta'} S_{\theta'
    \theta}(t).
\end{equation}
Also by (\ref{o21}) it follows that
\begin{equation}\label{SNorm}
    \|S_{\theta' \theta} (t) k \|_{\theta'} \leq  \| k \|_{\theta}.
\end{equation}
Let $O$ be an operator acting in each $\mathcal{K}_\theta$ such
that: (a) $O:\mathcal{D}_\theta \to \mathcal{K}_\theta$; (b)
$O:\mathcal{K}_\theta \to \mathcal{K}_{\theta'}$ is a bounded
operator whenever $\theta'>\theta$. As in the case of $A$ and $B$,
we define the operators $O_\theta = (O, \mathcal{D}_\theta)$ and
$O_{\theta'\theta}$. Similarly as in (\ref{O5}), for these
operators, we have
\begin{equation}
  \label{S2}
  O_{\theta'}k = O_{\theta'\theta}k =
  S_{\theta'\theta''}(0)O_{\theta''\theta}k, \qquad k\in
  \mathcal{K}_\theta,
\end{equation}
where the second equality holds for all $\theta'' \in (\theta,
\theta')$.

Now we can turn to constructing the resolving operators
$Q_{\alpha_*\alpha_0}(t)$, see (\ref{SiQ}). For  a given $n\in
\mathds{N}$ and  $q > 1$, we introduce
\begin{gather}
\label{oo}
    \alpha_{2k+1} = \alpha_0 + \left( \frac{k+1}{n+1} \cdot \frac{q-1}{q} + \frac{k}{n} \cdot \frac{1}{q} \right) (\alpha_* -
    \alpha_0),
    \\[.2cm] \nonumber
    \alpha_{2k} = \alpha_0 + \left( \frac{k}{n+1} \cdot \frac{q-1}{q} + \frac{k}{n} \cdot \frac{1}{q} \right) (\alpha_* - \alpha_0), \qquad 0 \leq k \leq n.
\end{gather}
In particular $\alpha_{2n+1} = \alpha_*$.  For these $\alpha_l$,
$l=0,\dots, 2n+1$ and  $0 \leq t_n \leq t_{n-1} \leq
\ldots \leq t_2 \leq t_1 \leq t$, we introduce the operator
$\pi_{\alpha_* \alpha_0}^{(n)}(t, t_1, \dots, t_n):
\mathcal{K}_{\alpha_0} \rightarrow \mathcal{K}_{\alpha_*}$ as
follows
\begin{eqnarray}
  \label{o23}
\pi_{\alpha_* \alpha_0}^{(0)}(t) & = & S_{\alpha_*
\alpha_0}^{(0)}(t), \\[.2cm] \nonumber
  \pi_{\alpha_* \alpha_0}^{(n)}(t, t_1, \dots, t_n) &= & S_{\alpha_{2n+1} \alpha_{2n}} (t-t_1) B_{\alpha_{2n} \alpha_{2n-1}}
  S_{\alpha_{2n-1} \alpha_{2n-2}}
  (t_1-t_2) \\[.2cm] \nonumber
& \times &    B_{\alpha_{2n-2} \alpha_{2n-3}} \cdots S_{\alpha_{3}
\alpha_{2}}(t_{n-1} - t_n) B_{\alpha_{2} \alpha_{1}} S_{\alpha_{1}
     \alpha_{0}}(t_n), \quad n\in \mathds{N}.
\end{eqnarray}
Similarly as in obtaining the second equality in (\ref{S2}), we
conclude that $\pi_{\alpha_* \alpha_0}^{(n)}(t, t_1, ..., t_n)$ is
independent of the particular choice of the partition of
$(\alpha_0,\alpha_*)$ into subintervals $(\alpha_l,\alpha_{l+1})$.
In view of \eqref{SPrime}, we have that
\begin{equation}
  \label{o24}
    \frac{d}{dt} \pi_{\alpha_* \alpha_0}^{(n)}(t, t_1, \dots, t_n) = A_{\alpha_* \alpha} \pi_{\alpha \alpha_0}^{(n)}(t, t_1, \dots,
    t_n) = A_{\alpha_*} \pi_{\alpha_* \alpha_0}^{(n)}(t, t_1, \dots,
    t_n),
\end{equation}
holding for all $\alpha \in (\alpha_0, \alpha_*)$. For the same
$\alpha$, by setting in (\ref{o23}) $t_1 = t$ we obtain
\begin{eqnarray}
  \label{o24a}
  \pi^{(n)}_{\alpha_*\alpha_0} (t, t, t_2, \dots, t_n) & = &
  S_{\alpha_*\alpha_{2n}}(0) B_{\alpha_{2n}\alpha_{2n-1}}\pi^{(n-1)}_{\alpha_{2n-1}\alpha_0} ( t, t_2, \dots,
  t_n) \\[.2cm] & = & B_{\alpha_{*}\alpha}\pi^{(n-1)}_{\alpha\alpha_0} ( t, t_2, \dots,
  t_n), \nonumber
\end{eqnarray}
see (\ref{S2}). By \eqref{SNorm} and the second estimate in
(\ref{O10}) we get the following estimate of the operator norm of
(\ref{o23})
\begin{equation}
  \label{o25}
\| \pi_{\alpha_* \alpha_0}^{(n)}(t, t_1, ..., t_n) \| \leq
\prod\limits_{k = 1}^n \left(  \frac{q n
\beta(\alpha_{2k-1})}{(\alpha_* - \alpha_0)e} \right) \leq \left(
\frac{n}{e} \right)^n \cdot \left(\frac{q}{T(\alpha_*, \alpha_0)}
\right)^n .
\end{equation}
Now we set  $Q_{\alpha_* \alpha_0}^{(0)}(t) = S_{\alpha_*
\alpha_0}(t)$ and
\begin{equation}
\label{o25a} Q_{\alpha_* \alpha_0}^{(n)}(t) =     \suml{k=0}^n
\int\limits_0^t \int\limits_0^{t_1} \ldots \int\limits_0^{t_{k-1}}
\pi_{\alpha_* \alpha_0}^{(k)}(t, t_1, ..., t_k) dt_1 dt_2 \ldots
dt_k, \quad n \in \mathds{N}.
\end{equation}
Then by (\ref{o25}) it follows that
\begin{gather*}
    \|Q_{\alpha_* \alpha_0}^{(n)}(t) - Q_{\alpha_* \alpha_0}^{(n-1)}(t)\| \leq \int\limits_0^t \int\limits_0^{t_1} \ldots \int\limits_0^{t_{n-1}} \left(\frac{n}{e} \right)^n \cdot \left(\frac{q}{T(\alpha_*, \alpha_0)} \right)^n dt_1 dt_2 \ldots dt_n =
    \\[.2cm]
    = \frac{1}{n!} \left(\frac{n}{e} \right)^n \cdot \left(\frac{qt}{T(\alpha_*, \alpha_0)}\right)^n .
\end{gather*}
For each $\tau < T(\alpha_*, \alpha_0)$, by using (\ref{o26}) we
conclude that there exist $q
> 1$ and $\alpha\in (\alpha_0,\alpha_*)$ such that $q\tau <
T(\alpha, \alpha_0)$.  Then by the above estimate it follows that,
uniformly on $[0,\tau]$, $\{Q_{\alpha\alpha_0}^{(n)}(t)\}$ is a
Cauchy sequence with respect to the operator norm. Let $Q_{\alpha
\alpha_0}(t)$ be its limit. Clearly, this also applies to the
sequence $\{Q_{\alpha_*\alpha_0}^{(n)}(t)\}$, which therefore
converges to $Q_{\alpha_* \alpha_0}(t)$ in the same sense. By this
we have that: (a) for each $t\in [0,T(\alpha_*,\alpha_0))$, there
exists $\alpha\in (\alpha_0,\alpha_*)$ such that
\begin{equation}
  \label{o27}
\forall k \in \mathcal{K}_{\alpha_0} \qquad Q_{\alpha_* \alpha_0}(t)
k = Q_{\alpha \alpha_0}(t) k;
\end{equation}
(b) for each $k \in \mathcal{K}_{\alpha_0}$, the map
$[0,T(\alpha_*,\alpha_0))\ni t \mapsto Q_{\alpha_* \alpha_0}(t) k
\in \mathcal{K}_{\alpha_*}$ is continuous, and
\begin{equation*}
\forall k \in \mathcal{K}_{\alpha_0} \qquad Q_{\alpha_* \alpha_0}(t)
k \in \mathcal{D}_{\alpha_*}.
\end{equation*}
The latter follows by (\ref{o11}) and (\ref{o27}). In the sequel, we
will use the following estimate
\begin{equation}
  \label{o28a}
  \|Q_{\alpha_* \alpha_0}(t) \| \leq
  \frac{T(\alpha_*,\alpha_0)}{T(\alpha_*,\alpha_0)-t},
\end{equation}
that readily follows by (\ref{o25}).

For $n\in \mathds{N}$ and $\alpha\in (\alpha_0,\alpha_*)$, by
(\ref{o24}) and (\ref{o24a}) we obtain from (\ref{o25a}) that
\begin{eqnarray}
  \label{o29}
 \frac{d}{dt} Q^{(n)}_{\alpha_*\alpha_0} (t) = A_{\alpha_*\alpha}
 Q^{(n)}_{\alpha\alpha_0}(t) + B_{\alpha_*\alpha}
 Q^{(n-1)}_{\alpha\alpha_0} (t).
\end{eqnarray}
Fix $\tau< T(\alpha_*, \alpha_0)$ and then pick $\alpha \in
(\alpha_0, \alpha_*)$ such that $q\tau < T(\alpha, \alpha_0)$. By
the arguments used above the right-hand side of (\ref{o29})
converges as $n\to +\infty$, uniformly on $[0,\tau]$, to
\[
A_{\alpha_*\alpha}
 Q_{\alpha\alpha_0}(t) + B_{\alpha_*\alpha}
 Q_{\alpha\alpha_0} (t) = A_{\alpha_*}
 Q_{\alpha_*\alpha_0}(t) + B_{\alpha_*}
 Q_{\alpha_*\alpha_0} (t) = {\rm RHS}(\ref{S1}).
\]
This completes the proof that $k_t$ given in (\ref{SiQ}) is a
solution of the problem in (\ref{OCFE}) in the sense of Definition
\ref{Odf}. The uniqueness stated in the theorem can be obtained
similarly as in the proof of the same property in \cite[Lemma
4.1]{Kawasaki2}.

\section{Proof of Theorem \ref{Th2}}\label{Section:ProofTh2}

In this case, the proof is much longer and will be done in several
steps. In view of (\ref{O1}), the solution described by
Theorem \ref{Th1} has the property $k_t(\emptyset)=k_0(\emptyset)$
for all $t< T(\alpha_*, \alpha_0)$ since $(L^\Delta k)(\emptyset)
=0$. By the very choice of the spaces (\ref{KSpace}) this solution
satisfies condition (b) of Proposition \ref{CorFunProp}. Thus, it
remains to prove that it has the positivity property defined in
(\ref{o7}). To this end we make the following. First, in subsection
\ref{Subsection:AuxiliaryModel} we introduce an auxiliary model,
described by $L^\sigma$ with some $\sigma>0$. For this model, by
repeating the proof of Theorem \ref{Th1} we obtain the evolution
$k_0 \to k_t^\sigma$ in $\mathcal{K}_\theta$-spaces. In subsection
\ref{Subsection:SigmaConv}, we prove that
\begin{equation}
  \label{S10}
  \langle \! \langle G, k^\sigma_t \rangle \!\rangle \to \langle \! \langle G, k_t \rangle
  \!\rangle , \qquad \sigma \to 0^+,
\end{equation}
holding for all $G\in \mathcal{B}_{bs}$, cf. (\ref{o7}) and
(\ref{o8}). In the proof, we use the predual evolution constructed in
subsection \ref{Subsection:PredualEvolution}. To show that
$k_t^\sigma$ has the positivity property (\ref{o7}) we construct its
approximations (subsection \ref{Subsection:LocalEvolution}). As we
then show, these approximations coincide with the directly obtained
local correlation functions, see \eqref{LocalCorFun} and Corollary
\ref{Aco}, that have the required positivity by construction.
Finally, in subsection \ref{Subsection:LambdaNConv} we eliminate the
approximation and thus obtain the desired positivity of
$k^\sigma_t$.

\subsection{Auxiliary model}\label{Subsection:AuxiliaryModel}
For a given $\sigma
> 0$, we set $\psi_\sigma(x) = e^{- \sigma |x|^2}$, $x \in \Rd$.
Obviously
\[
    \intRd e^{- \sigma |x|^2} dx = \left(\frac{\pi}{\sigma} \right)^{d/2}.
\]
The model in question is described by
\begin{align}\label{OperatorLSigma}
(L^\sigma F)(\gamma) &= \suml{\{x,y\} \subset \gamma} \
\intRd \psi_\sigma(z) c_1(x,y;z) \Big(F\big(\gamma \bs \{x,y\} \cup z \big) - F(\gamma) \Big) dz \nonumber \\[.2cm]
&+ \suml{x \in \gamma} \  \intRd \psi_\sigma(y)
\tilde{c}_2(x;y;\gamma) \Big(F\big(\gamma \bs x \cup y\big) -
F(\gamma) \Big) dy.
\end{align}
Then we repeat the steps made in (\ref{o2}) and (\ref{o9}) to obtain
the operator $L^{\Delta,\sigma}=L^{\Delta, \sigma}_1 + L^{\Delta,
\sigma}_2$ in the following form, cf. (\ref{O1}),
\begin{align*}
(L^{\Delta, \sigma}_1 k)(\eta) &= \ \ \frac{1}{2}
\int\limits_{(\mathds{R}^d)^2} \sum\limits_{z \in \eta}
\psi_\sigma(z) c_1(x,y;z) k(\eta \backslash z \cup \{x,y\})  dx dy
    \\
&- \frac{1}{2} \int\limits_{(\mathds{R}^d)^2} \psi_\sigma(z)
\sum\limits_{x \in \eta} c_1(x,y;z) k(\eta \cup y)  dy dz
    \\
    &- \frac{1}{2} \int\limits_{(\mathds{R}^d)^2} \psi_\sigma(z) \sum\limits_{y \in \eta} c_1(x,y;z) k(\eta \cup x)  dx dz
    \\
    &- \int\limits_{\mathds{R}^d} \psi_\sigma(z) \sum\limits_{\{x,y\} \subset \eta}  c_1(x,y;z) dz \ k(\eta)
    \\
(L^{\Delta, \sigma}_{2} k)(\eta) &= \intRd \sum\limits_{y \in \eta}
(Q_y k)(\eta \backslash y \cup x) \psi_\sigma(y) c_2(x-y) \prodl{u
\in \eta \backslash y} e^{-\phi(y-u)} dx
    \\
    &- \intRd (Q_y k)(\eta) \psi_\sigma(y) \sum\limits_{x \in \eta} c_2(x-y) \prodl{u \in \eta \backslash x} e^{-\phi(y-u)}
    dy,
\end{align*}
where $Q_y$ is the same as in \eqref{QyNotion}. Like above, cf.
(\ref{O1}), we split $L^{\Delta,\sigma}_1$ (resp.
$L^{\Delta,\sigma}_2$) into the summands $L_{11}^{\Delta, \sigma}$,
$L_{12}^{\Delta, \sigma}$, $L_{13}^{\Delta, \sigma}$,
$L_{14}^{\Delta, \sigma}$ (resp. $L_{21}^{\Delta, \sigma}$ and
$L_{22}^{\Delta, \sigma}$). We also introduce
\begin{gather}
  \label{S11}
  (A^\sigma k)(\eta) = - \Psi_\sigma (\eta) k(\eta), \qquad   B^\sigma = L^{\Delta,\sigma} - A^\sigma, \\[.2cm] \nonumber
\Psi_\sigma
  (\eta):= \int\limits_{\mathds{R}^d} \psi_\sigma(z) \sum\limits_{\{x,y\} \subset \eta}  c_1(x,y;z)
  dz,
\end{gather}
and then define the operators $A^\sigma_\theta=(A^\sigma,
\mathcal{D}_\theta)$, $B^\sigma_\theta=(B^\sigma,
\mathcal{D}_\theta)$, $L^{\Delta,\sigma}_\theta =
A^\sigma_\theta+B^\sigma_\theta$, $A^\sigma_{\theta'\theta}$,
$B^\sigma_{\theta'\theta}$ for $\theta'>\theta$ and $\mathcal{D}_\theta$ defined in (\ref{o10}). Since
$\psi_{\sigma} \leq 1$, by the literal repetition of the proof of
Theorem \ref{Th1} we construct the family of operators
$Q_{\alpha_*\alpha_0}^\sigma (t)$, $\alpha_0\in \mathds{R}$,
$\alpha_*>\alpha$, $t\in [0, T(\alpha_*,\alpha_0))$ such that
$k_t^\sigma = Q_{\alpha_*\alpha_0}^\sigma (t)k_0$ with $k_0 \in
\mathcal{K}_{\alpha_0}$is the unique classical solution -- on the
time interval $[0,T(\alpha_*,\alpha_0))$ with $T(\alpha_*,\alpha_0)$
as in (\ref{o26}) -- of the problem
\begin{equation}\label{SigmaEv}
\frac{d}{dt} k_t^\sigma = L^{\Delta, \sigma}_{\alpha_*} k_t^\sigma,
\qquad k^\sigma_t|_{t=0} = k_0 .
\end{equation}
Note that the norm of $Q_{\alpha_*\alpha_0}^\sigma (t)$ also
satisfies (\ref{o28a}).

\subsection{Predual evolution}\label{Subsection:PredualEvolution}
To prove (\ref{S10}) we allow $G$ to evolve accordingly to the rule
\begin{equation*}
  \langle \! \langle G_t, k_0 \rangle \! \rangle = \langle \! \langle G_0, Q_{\alpha_*\alpha_0}(t) k_0 \rangle \! \rangle
\end{equation*}
The proper context to this is to construct the corresponding evolution
in the space predual to $\mathcal{K}_{\alpha_*}$, which
ought to be of $L^1$-type. For $\theta \in \mathds{R}$, we introduce
\begin{equation}\label{GThetaSpace}
 \mathcal{G}_\theta =\{G:\Gamma_0\to \mathds{R}: |G|_\theta < \infty\}, \qquad    |G|_\theta := \intGamN |G(\eta)| e^{\theta |\eta|} \lambda(d\eta).
\end{equation}
Obviously, for $\theta' > \theta$, we have that
$\mathcal{G}_{\theta'} \hookrightarrow \mathcal{G}_\theta$. Notably,
$G \in \bbs$ lies in $\mathcal{G}_\theta$ with an arbitrary $\theta
\in \mathds{R}$. Indeed, let $M$ be the bound of $|G|$ and $N$ and
$\Lambda$ be as in Definition \ref{Gdf}. Then we have
\begin{equation}\label{BbsAndG}
\intGamN |G(\eta)| e^{\theta |\eta|} \lambda (d \eta)= \suml{n=0}^N
\frac{1}{n!} e^{\theta n} \intl{\Lambda^n} |G(x_1, \ldots, x_n)|
dx_1 \ldots dx_n \leq M e^{|\Lambda| e^\theta}.
\end{equation}
\begin{lemma}
  \label{Slm}
Let $Q_{\alpha_*\alpha_0}(t)$, $\alpha_0\in \mathds{R}$,
$\alpha_*>\alpha_0$,  $t<T(\alpha_*,\alpha_0)$, see (\ref{o26}), be
the family of bounded operators constructed in the proof of Theorem
\ref{Th1}. Then there exists the family $H_{\alpha_0\alpha_*} (t):
\mathcal{G}_{\alpha_*} \to \mathcal{G}_{\alpha_0}$,
$t<T(\alpha_*,\alpha_0)$ such that: (a) the norm of
$H_{\alpha_0\alpha_*}(t)$ satisfies (\ref{o28a}); (b) for
each $G\in \mathcal{G}_{\alpha_*}$ and $k\in
\mathcal{K}_{\alpha_0}$, the following holds
\begin{equation}
  \label{QHDUality}
  \langle \! \langle H_{\alpha_0\alpha_*} (t) G, k \rangle \! \rangle = \langle \! \langle G, Q_{\alpha_*\alpha_0}(t) k \rangle \!
  \rangle;
\end{equation}
(c) the map $[0,T(\alpha_*,\alpha_0)) \ni t \mapsto
H_{\alpha_0\alpha_*} (t)$ is continuous  in the operator norm
topology.
\end{lemma}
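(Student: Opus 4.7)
The plan is to mirror the construction of $Q_{\alpha_*\alpha_0}(t)$ from Section~\ref{Section:ProofTh1} on the predual scale $\{\mathcal{G}_\theta\}_{\theta\in\mathds{R}}$. The pairing $\llangle G,k\rrangle$ realises $\mathcal{K}_\theta$ as a subspace of the Banach dual of $\mathcal{G}_\theta$, and for $\theta'>\theta$ the embedding $\mathcal{G}_{\theta'}\hookrightarrow\mathcal{G}_\theta$ is the predual counterpart of $\mathcal{K}_\theta\hookrightarrow\mathcal{K}_{\theta'}$. For each building block of $Q$ I introduce its predual. The multiplication operator $S_{\theta'\theta}(t)$ of (\ref{o21}) has the formal dual $(S^\dagger_{\theta\theta'}(t)G)(\eta)=e^{-\Psi(\eta)t}G(\eta)$ viewed as a map $\mathcal{G}_{\theta'}\to\mathcal{G}_\theta$, and $A^\dagger_{\theta\theta'}G=-\Psi G$. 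For the summands making up $B$, I apply the Minlos identities (\ref{Minlos1}) and (\ref{Minlos2}) to rewrite $\llangle G,Bk\rrangle$ in the form $\llangle B^\dagger G,k\rrangle$ for an explicit operator $B^\dagger_{\theta\theta'}\colon\mathcal{G}_{\theta'}\to\mathcal{G}_\theta$; term by term this is a sum of integral operators whose kernels are built from $c_1$, $c_2$ and $\phi$ in the same fashion as those of $B$, with arguments redistributed according to the shift $\eta\mapsto\eta\cup x$ or $\eta\mapsto\eta\setminus z\cup\{x,y\}$ dualised via Minlos.

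The next step is to transfer the norm estimates to the predual side. Integrating the defining formula (\ref{GThetaSpace}) against the kernel of each predual summand and repeating the computations that produced (\ref{O10}) and (\ref{SNorm}) yields
\[
\|A^\dagger_{\theta\theta'}\|\leq\frac{2\coMax}{e^2(\theta'-\theta)^2},\qquad\|B^\dagger_{\theta\theta'}\|\leq\frac{\beta(\theta')}{e(\theta'-\theta)},\qquad\|S^\dagger_{\theta\theta'}(t)G\|_\theta\leq|G|_{\theta'},
\]
together with a continuity bound of the form $|S^\dagger_{\theta\theta'}(t)G-S^\dagger_{\theta\theta'}(t')G|_\theta\leq|t-t'|\cdot 2\coMax|G|_{\theta'}/(e^2(\theta'-\theta)^2)$ mirroring (\ref{o22}). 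All of these rest on the same elementary inequalities for $xe^{-ax}$ and $x^2 e^{-ax}$, applied in the $|\cdot|_\theta$-integral rather than in the $\|\cdot\|_\theta$-supremum.

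With these ingredients, I set
\[
\pi^{\dagger,(n)}_{\alpha_0\alpha_*}(t,t_1,\dots,t_n)=S^\dagger_{\alpha_0\alpha_1}(t_n)B^\dagger_{\alpha_1\alpha_2}S^\dagger_{\alpha_2\alpha_3}(t_{n-1}-t_n)\cdots B^\dagger_{\alpha_{2n-1}\alpha_{2n}}S^\dagger_{\alpha_{2n}\alpha_{2n+1}}(t-t_1),
\]
with $\alpha_l$ as in (\ref{oo}), and
\[
H^{(n)}_{\alpha_0\alpha_*}(t)=\sum_{k=0}^n\int_0^t\!\int_0^{t_1}\!\!\cdots\!\int_0^{t_{k-1}}\pi^{\dagger,(k)}_{\alpha_0\alpha_*}(t,t_1,\dots,t_k)\,dt_1\cdots dt_k.
\]
Plugging the norm bounds above into the telescoped product yields exactly the bound (\ref{o25}) for $\|\pi^{\dagger,(n)}\|$, so the argument following (\ref{o25a}) applies verbatim: $\{H^{(n)}_{\alpha_0\alpha_*}(t)\}$ is Cauchy in operator norm uniformly on $[0,\tau]$ for each $\tau<T(\alpha_*,\alpha_0)$, and its limit $H_{\alpha_0\alpha_*}(t)\colon\mathcal{G}_{\alpha_*}\to\mathcal{G}_{\alpha_0}$ satisfies (\ref{o28a}), which gives (a).

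For (b), the elementary duality identities for each factor give
\[
\llangle \pi^{\dagger,(n)}_{\alpha_0\alpha_*}(t,t_1,\dots,t_n)G,k\rrangle=\llangle G,\pi^{(n)}_{\alpha_*\alpha_0}(t,t_1,\dots,t_n)k\rrangle,
\]
which persists under the parameter integrals and the summation defining $H^{(n)}$ and $Q^{(n)}$; passing to the limit via the uniform norm convergence and the bound $|\llangle G,k\rrangle|\leq|G|_{\alpha_*}\|k\|_{\alpha_*}$ proves (\ref{QHDUality}). Continuity (c) follows from the uniform Cauchy property of $\{H^{(n)}_{\alpha_0\alpha_*}(t)\}$ together with the norm continuity in $t$ of each $S^\dagger$-factor propagated through the finite products. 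The main obstacle is the first step: carrying out the Minlos-based identification of $B^\dagger$ term by term and checking that each of the resulting summands really fits under the common envelope $\beta(\theta')/(e(\theta'-\theta))$, since the coalescence and jump summands of $B$ rearrange the arguments of $k$ in different ways and each must be dualised by its own application of (\ref{Minlos1}) or (\ref{Minlos2}).
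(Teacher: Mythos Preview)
Your proposal is correct and follows essentially the same route as the paper: construct predual building blocks $\hat S_{\theta\theta'}(t)$, $\hat A_{\theta\theta'}$, $\hat B_{\theta\theta'}$ (the paper uses hats rather than daggers), verify they satisfy the same norm bounds as (\ref{O10}) and (\ref{o22}), assemble the predual iterates $\varpi^{(n)}_{\alpha_0\alpha_*}$ exactly as you write $\pi^{\dagger,(n)}$, and obtain $H_{\alpha_0\alpha_*}(t)$ as the norm-convergent series (\ref{OperatorHn}), with the duality (\ref{QHDUality}) reduced term by term to (\ref{S16}). One cosmetic point: the paper reserves the dagger notation $L^\dagger$, $A^\dagger$, $B^\dagger$ for the Fokker--Planck operators in subsection~\ref{Subsection:LocalEvolution}, so you may want to rename your predual operators to avoid a clash; also note the paper simply cites \cite{RepulsiveCoalescence} for the explicit form of $\hat L$ rather than carrying out the Minlos computations you flag as the ``main obstacle''.
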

\begin{proof}
Clearly, the most challenging part is the continuity stated in (c).
Thus, we start by deriving the corresponding generating operator. To
this end, we use the rule
\[
 \langle \! \langle G, L^\Delta k \rangle \! \rangle = \langle \! \langle\hat{L} G,  k \rangle \!
  \rangle .
\]
It can be shown, see \cite{RepulsiveCoalescence},
that it has the following form
\begin{equation*}
    \hat{L} = \hat{L}_1 + \hat{L}_2,
\end{equation*}
with
\begin{eqnarray*}
  & &  \qquad \quad ( \hat{L}_1 G) (\eta) = \int\limits_{\mathds{R}^d} \sum\limits_{\{x,y\} \subset \eta}
   c_1(x,y;z) \\[.2cm] \nonumber & & \times
    \Big(G(\eta \backslash \{x,y\} \cup z) - G(\eta \backslash y) -G(\eta \backslash x) -G(\eta) \Big) dz,
    \\[.2cm] \nonumber
    & & (\hat{L}_2 G )(\eta) = \intRd \suml{x \in \eta} c_2(x-y) \suml{\xi \subset \eta \bs x} \big[G(\xi \cup y) - G(\xi \cup x)\big] \\[.2cm]
     \nonumber & & \qquad  \times \suml{\zeta \subset \xi} \prodl{u \in \eta \bs \xi \bs x \cup \zeta} \left(e^{-\phi (y -u)} - 1\right)
     dy.
\end{eqnarray*}
Now we set
\begin{equation*}
\hat{L} = \hat{A} + \hat{B}, \qquad  ( \hat{A} G) (\eta)
:= - \Psi(\eta) G(\eta),
\end{equation*}
where $\Psi$ is as in the last line of (\ref{O1}). Then define,
cf. (\ref{o10}),
\begin{equation}
  \label{hD}
\hat{ \mathcal{D}}_\theta = \{G\in \mathcal{G}_\theta: |\cdot|^2 G
\in \mathcal{G}_\theta\}, \qquad \theta \in \mathds{R}.
\end{equation}
Like in the dual spaces $\mathcal{K}_\theta$, cf. (\ref{o11}), here
we have that both $\hat{A}$ and $\hat{B}$ map $\hat{
\mathcal{D}}_\theta$ in $\mathcal{G}_\theta$. This allows one to
introduce the operators $\hat{A}_\theta
=(\hat{A},\hat{\mathcal{D}}_\theta)$ and $\hat{B}_\theta
=(\hat{B},\hat{\mathcal{D}}_\theta)$ as well as bounded operators
$\hat{A}_{\theta \theta'}$ and $\hat{B}_{\theta \theta'}$ mapping
$\mathcal{G}_{\theta'}$ to $\mathcal{G}_\theta$ for
$\theta'>\theta$. Their operator norms satisfy the same estimates as
the norms of $A_{\theta'\theta}$ and $B_{\theta'\theta}$,
respectively, see (\ref{O10}).

For such $\theta$ and $\theta'$, we also set
$\hat{S}_{\theta \theta'}(t): \mathcal{G}_{\theta'}\to
\mathcal{G}_\theta$  to be the multiplication operator by the
function $\exp(- t \Psi(\eta))$. Similarly as in (\ref{o22}) one
shows that
\begin{equation}
  \label{S15}
\left\vert S_{\theta \theta'} (t) G - S_{\theta \theta'} (t')
G\right\vert_\theta \leq |t-t'| \frac{2 c_1^\text{max}}{(\theta' -
\theta)^2 e^2} |G|_{\theta'},
\end{equation}
which yields the continuity of the map $[0,+\infty)\ni t \mapsto
\hat{S}_{\theta \theta'} (t)$ in the operator norm topology. By the
very construction of these operators we have that, for each $G\in
\mathcal{G}_{\theta'}$ and $k \in \mathcal{K}_\theta$, the following
holds
\begin{equation}
 \label{S16}
 \langle \!\langle \hat{B}_{\theta \theta'} G, k \rangle \!\rangle = \langle \!\langle G, B_{\theta' \theta} k \rangle \!\rangle , \qquad
 \langle \!\langle \hat{S}_{\theta \theta'} (t) G, k \rangle \!\rangle = \langle \!\langle G, S_{\theta' \theta} (t) k \rangle \!\rangle,
\end{equation}
where the second equality holds for all $t\geq0$. Now, for a given
$n\in \mathds{N}$, $\alpha_l$, $l=0,\dots , 2n+1$ defined in
(\ref{oo}) and $t_1, \dots , t_n$ as in (\ref{o23}), we set
\begin{gather*}
    \varpi^{(n)}_{\alpha_0 \alpha_*} (t, t_1, \ldots, t_n) = \hat{S}_{\alpha_0 \alpha_1} (t_n) \hat{B}_{\alpha_1 \alpha_2}
    \hat{S}_{\alpha_2 \alpha_3} (t_{n-1} - t_n) \hat{B}_{\alpha_3
    \alpha_4} \times
    \cdots \times
    \\[.2cm] \nonumber
   \times  \hat{S}_{\alpha_{2n-2} \alpha_{2n-1}} (t_{1} - t_2) \hat{B}_{\alpha_{2n-1} \alpha_{2n}} \hat{S}_{\alpha_{2n} \alpha_{2n+1}} (t - t_1).
\end{gather*}
Then we define
\begin{equation}\label{OperatorHn}
    H_{\alpha_0 \alpha_*}(t) = S_{\alpha_* \alpha_0}(t) + \suml{n=1}^{\infty} \int\limits_0^t \int\limits_0^{t_1} \ldots
    \int\limits_0^{t_{n-1}} \varpi_{\alpha_0 \alpha_*}^{(n)}(t, t_1, ..., t_n) dt_1 dt_2 \ldots dt_n.
\end{equation}
Since the operator norms of all $\hat{S}$ and $\hat{B}$ satisfy the
same estimates as the norms of respectively $S$ and $B$, the
operator norm of $\varpi^{(n)}_{\alpha_0\alpha_**}$ satisfies
(\ref{o25}). Hence, the series in (\ref{OperatorHn}) converges in
the norm topology, uniformly on compact subsets of
$[0,T(\alpha_*,\alpha_0))$, which together with (\ref{S15}) yields
the continuity stated in claim (c) and the bound stated in (a). In
view of the convergence just mentioned, to prove (\ref{QHDUality})
it is enough to show that, for each $n\in \mathds{N}$ and $0\leq t_n
\leq t_{n-1} \leq \cdots \leq t_1 \leq t$, the following holds
\begin{gather*}
\langle\! \langle \varpi^{(n)}_{\alpha_0 \alpha_*} (t, t_1, \ldots,
t_n) G, k \rangle \! \rangle = \langle\! \langle
 G, \pi^{(n)}_{\alpha_* \alpha_0} (t, t_1, \ldots, t_n) k \rangle
\! \rangle,
\end{gather*}
which is obviously the case in view of (\ref{S16}).
\end{proof}

\subsection{Taking the limit $\sigma \rightarrow 0$}\label{Subsection:SigmaConv}

Our aim now is to prove the following statement, cf. (\ref{S10}).
\begin{lemma}
  \label{S1lm}
For arbitrary $\alpha_0\in \mathds{R}$, $\alpha_*>\alpha_0$, every
$G\in \mathcal{B}_{bs}$ and $k_0\in \mathcal{K}_{\alpha_0}$, the
following holds
\begin{equation}
  \label{S20}
 \forall t<T(\alpha_*, \alpha_0)/2 \qquad  \langle \! \langle G, Q^\sigma_{\alpha_* \alpha_0} (t) k_0\rangle \!
  \rangle \to \langle \! \langle G, Q_{\alpha_* \alpha_0} (t) k_0\rangle \!
  \rangle, \qquad \sigma\to 0^+.
\end{equation}
\end{lemma}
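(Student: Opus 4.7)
The plan is to move the convergence in \eqref{S20} to the predual side, where the difference $Q-Q^\sigma$ becomes $H-H^\sigma$ and can be attacked by dominated convergence on the coefficient level. To begin, I would repeat the construction of Lemma~\ref{Slm} verbatim, replacing $\hat{L}$ by $\hat{L}^\sigma$ (the dual of $L^{\Delta,\sigma}$ derived exactly as in \eqref{o9}), to obtain a family $H^\sigma_{\alpha_0\alpha_*}(t)\colon \mathcal{G}_{\alpha_*}\to \mathcal{G}_{\alpha_0}$ predual to $Q^\sigma_{\alpha_*\alpha_0}(t)$. Because $0\leq \psi_\sigma\leq 1$, all operator-norm estimates underlying the construction (the analogues of \eqref{O10} and \eqref{S15}) are dominated by their $\psi_\sigma\equiv 1$ counterparts, so the series \eqref{OperatorHn} converges on the same horizon $T(\alpha_*,\alpha_0)$ and $\{H^\sigma_{\alpha_0\alpha_*}(t)\}_{\sigma>0}$ is uniformly bounded on each compact subinterval. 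By the $\sigma$-analogue of \eqref{QHDUality}, \eqref{S20} reduces to showing
\[
\bigl|H^\sigma_{\alpha_0\alpha_*}(t)G - H_{\alpha_0\alpha_*}(t)G\bigr|_{\alpha_0}\to 0,\qquad \sigma\to 0^+,
\]
for $G\in\mathcal{B}_{bs}$ (which lies in $\mathcal{G}_{\alpha_*}$ by \eqref{BbsAndG}).

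To handle this difference I would set up a Duhamel-type identity across an intermediate scale. Put $\alpha_\star = (\alpha_0+\alpha_*)/2$; by \eqref{o26} one has $T(\alpha_*,\alpha_\star)=T(\alpha_*,\alpha_0)/2$, and since $\beta$ is increasing one also has $T(\alpha_\star,\alpha_0)>T(\alpha_*,\alpha_0)/2$. For the given $t<T(\alpha_*,\alpha_0)/2$, pick $\alpha_0<\alpha_1<\alpha_\star<\alpha_2<\alpha_*$ close enough to $\alpha_\star$ so that $t<T(\alpha_*,\alpha_2)$ and $t<T(\alpha_1,\alpha_0)$ both remain valid. Then for every $s\in[0,t]$, the operators $H^\sigma_{\alpha_2\alpha_*}(s)$ and $H_{\alpha_0\alpha_1}(t-s)$ are bounded in the scale (with bounds uniform in $\sigma$ and $s$), and $\hat{L}_{\alpha_1\alpha_2}-\hat{L}^\sigma_{\alpha_1\alpha_2}$ is bounded from $\mathcal{G}_{\alpha_2}$ to $\mathcal{G}_{\alpha_1}$. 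The target identity is
\[
H_{\alpha_0\alpha_*}(t)G - H^\sigma_{\alpha_0\alpha_*}(t)G = \int_0^t H_{\alpha_0\alpha_1}(t-s)\bigl(\hat{L}_{\alpha_1\alpha_2}-\hat{L}^\sigma_{\alpha_1\alpha_2}\bigr)H^\sigma_{\alpha_2\alpha_*}(s)G\,ds,
\]
obtained by differentiating $s\mapsto H_{\alpha_0\alpha_1}(t-s)H^\sigma_{\alpha_2\alpha_*}(s)G$ in $\mathcal{G}_{\alpha_0}$ and using the dual versions of \eqref{S1} satisfied by $H$ and $H^\sigma$.

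Coefficient convergence is then a routine consequence of dominated convergence: each summand in $\hat{L}-\hat{L}^\sigma$ is of the form $\int_{\mathds{R}^d}(1-\psi_\sigma(z))c_1(x,y;z)\,dz$ or $\int_{\mathds{R}^d}(1-\psi_\sigma(y))c_2(x-y)(\cdots)\,dy$, and by \eqref{ModelAssumptions} the kernels $c_1(x,y;\cdot)$ and $c_2$ are integrable uniformly in $(x,y)$; since $0\leq 1-\psi_\sigma\leq 1$ and $1-\psi_\sigma(\cdot)\to 0$ pointwise, one obtains $|(\hat{L}-\hat{L}^\sigma)G'|_{\alpha_1}\to 0$ for every fixed $G'\in\mathcal{G}_{\alpha_2}$. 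Inserting $G'=H^\sigma_{\alpha_2\alpha_*}(s)G$, whose $|\cdot|_{\alpha_2}$-norm is bounded uniformly in $\sigma$ and $s\in[0,t]$, together with the uniform bound on $\|H_{\alpha_0\alpha_1}(t-s)\|$, a second application of dominated convergence in $s$ concludes the proof.

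The hardest step I expect is establishing the Duhamel identity across the four scales $\alpha_0<\alpha_1<\alpha_2<\alpha_*$, namely verifying that $s\mapsto H_{\alpha_0\alpha_1}(t-s)H^\sigma_{\alpha_2\alpha_*}(s)G$ is differentiable in $\mathcal{G}_{\alpha_0}$ with the expected derivative, and that the fundamental theorem of calculus applies in integrated form. This step is precisely what forces $t<T(\alpha_*,\alpha_0)/2$: the symmetric split at $\alpha_\star$ requires $H^\sigma(s)$ and $H(t-s)$ to be simultaneously defined on intervals lying within $T(\alpha_*,\alpha_\star)=T(\alpha_*,\alpha_0)/2$. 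The remaining ingredients---the uniform-in-$\sigma$ bounds on $H^\sigma$ and pointwise convergence $\psi_\sigma\to 1$ under an integrable envelope---are straightforward consequences of the construction and of the assumptions \eqref{ModelAssumptions}.
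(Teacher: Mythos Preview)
Your plan has a genuine gap at the final step. You correctly establish that $|(\hat{L}-\hat{L}^\sigma)G'|_{\alpha_1}\to 0$ for every \emph{fixed} $G'\in\mathcal{G}_{\alpha_2}$, but then you apply this to the $\sigma$-dependent element $G'_\sigma:=H^\sigma_{\alpha_2\alpha_*}(s)G$. Strong convergence of $\hat{L}-\hat{L}^\sigma$ together with a uniform bound on $|G'_\sigma|_{\alpha_2}$ does \emph{not} give $|(\hat{L}-\hat{L}^\sigma)G'_\sigma|_{\alpha_1}\to 0$; for that you would need operator-norm convergence of $\hat{L}^\sigma\to\hat{L}$, and this fails here. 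Indeed, the multiplication part alone already shows it: $\Psi(\eta)-\Psi_\sigma(\eta)=\sum_{\{x,y\}\subset\eta}\int(1-\psi_\sigma(z))c_1(x,y;z)\,dz$, and by the translation invariance $c_1(x,y;z)=c_1(0,y-x;z-x)$ the mass of $c_1(x,y;\cdot)$ sits near $x$, so for $|x|$ large the integral stays of order $c_1^{\max}$ no matter how small $\sigma$ is. Hence $\|\hat{A}-\hat{A}^\sigma\|_{\alpha_1\alpha_2}\not\to 0$, and your ``second dominated convergence in $s$'' lacks the needed pointwise-in-$s$ convergence of the integrand.

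The easiest repair within your framework is to reverse the roles in the Duhamel identity so that the $\sigma$-independent flow acts first on $G$: differentiate $s\mapsto H^\sigma_{\alpha_0\alpha_1}(t-s)H_{\alpha_2\alpha_*}(s)G$ instead, obtaining
\[
H_{\alpha_0\alpha_*}(t)G-H^\sigma_{\alpha_0\alpha_*}(t)G=\int_0^t H^\sigma_{\alpha_0\alpha_1}(t-s)\bigl(\hat{L}_{\alpha_1\alpha_2}-\hat{L}^\sigma_{\alpha_1\alpha_2}\bigr)H_{\alpha_2\alpha_*}(s)G\,ds.
\]
Now $H_{\alpha_2\alpha_*}(s)G$ is fixed, your strong-convergence argument gives $|(\hat{L}-\hat{L}^\sigma)H_{\alpha_2\alpha_*}(s)G|_{\alpha_1}\to 0$ for each $s$, the uniform bound on $\|H^\sigma_{\alpha_0\alpha_1}(t-s)\|$ and on $\|\hat{L}-\hat{L}^\sigma\|$ furnishes an integrable majorant in $s$, and dominated convergence finishes the job. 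The paper takes a different but related route: it writes the Duhamel on the $Q$-side, uses $H$ only once to move $Q_{\alpha_*\alpha_2}(t-s)$ to the $G$-slot, and then exploits the $L^\infty$-type estimate $|k^\sigma_s(\eta)|\leq e^{\alpha_1|\eta|}\|k^\sigma_s\|_{\alpha_1}$ to strip the $\sigma$-dependence from $k^\sigma_s$ pointwise in $\eta$, leaving only the explicit factor $(1-\psi_\sigma)$ to which Lebesgue's theorem applies directly. That trick is unavailable on the predual side because $\mathcal{G}_\theta$ is an $L^1$-space.
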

\begin{proof}
First of all we note that, for each $\alpha_2>\alpha_1$, both
$Q_{\alpha_2\alpha_1}(0)$ and $Q^\sigma_{\alpha_2\alpha_1}(0)$ are
the corresponding embedding operators. Then, for $\alpha_0 <
\alpha_1 < \alpha_2 < \alpha_*$, we can write, cf. (\ref{S1}) and
(\ref{O5}),
\begin{gather}\label{SigmaLim}
[Q_{\alpha_*\alpha_0}(t) - Q^\sigma_{\alpha_*\alpha_0}(t)] k_0 = -
\int_{0}^{t} \frac{d}{ds}
[Q_{\alpha_*\alpha_2}(t-s)Q^\sigma_{\alpha_2\alpha_0}(s)] k_0 ds \\[.2cm] \nonumber =
\int_{0}^{t} Q_{\alpha_* \alpha_2} (t-s) (A_{\alpha_2\alpha_1} -
A^\sigma_{\alpha_2\alpha_1}) k^\sigma_s ds \\[.2cm] \nonumber + \int_{0}^{t}
Q_{\alpha_* \alpha_2}(t-s) (B_{\alpha_2\alpha_1} -
B^\sigma_{\alpha_2\alpha_1}) k^\sigma_s ds,
\end{gather}
where $k_s^\sigma$ is supposed to lie in $\mathcal{K}_{\alpha_1}$
and
\begin{equation}
  \label{S21}
t < \min\{T(\alpha_1, \alpha_0); T(\alpha_*, \alpha_2)\}.
\end{equation}
Then
\begin{equation}
  \label{S21a}
\langle \! \langle G, Q_{\alpha_* \alpha_0} (t) k_0\rangle \!
  \rangle - \langle \! \langle G, Q^\sigma_{\alpha_* \alpha_0} (t) k_0\rangle \!
  \rangle = \Upsilon^1_\sigma(t) + \Upsilon^2_\sigma(t),
\end{equation}
where $\Upsilon^1_\sigma(t)$ and $\Upsilon^2_\sigma(t)$ correspond
to the first and second summands in the right-hand side of
(\ref{SigmaLim}), respectively.  By means of (\ref{QHDUality}) we
obtain
\begin{gather}
\label{S22} \Upsilon^1_\sigma(t)= \intGamN G(\eta)\left(
\int_{0}^{t} Q_{\alpha_* \alpha_2}(t-s) (A_{\alpha_2
\alpha_1}-A_{\alpha_2 \alpha_1}^\sigma) k^\sigma_s(\eta) ds
\right)\lambda(d\eta)
    \\[.2cm] \nonumber
= -\int_{0}^{t}\left( \intGamN G_{t-s}(\eta) \left[
\Psi(\eta)-\Psi_\sigma (\eta)\right] k_s^\sigma(\eta)
\lambda(d\eta)\right) ds,
\end{gather}
where $t$ (hence $t-s$ and $s$) satisfy (\ref{S21}), and $G_{t-s}:=
H_{\alpha_2 \alpha_*}(t-s) G \in \mathcal{G}_{\alpha_2}$ since $G\in
\mathcal{G}_{\alpha_*}$, see (\ref{BbsAndG}). By
(\ref{KNormEstimate}) and then by (\ref{o28a}) we get from
(\ref{S22}) the following estimate
\begin{gather}
\label{S22a} \left|\Upsilon^1_\sigma(t)\right| \leq
\frac{\|k_0\|_{\alpha_0} T(\alpha_1, \alpha_0)}{T(\alpha_1,
\alpha_0) - t} \int_0^t \int_{\Gamma_0} \left| G_s (\eta)\right|
\left[\Psi(\eta) - \Psi_\sigma(\eta) \right]e^{\alpha_1|\eta|} d s
\lambda (d\eta)
\end{gather}
For each $\eta\in \Gamma_0$, the integral in the right-hand side of
the second line in (\ref{S11}) is bounded by $c_1^{\rm
max}|\eta|(|\eta|-1)/2$, see the second line in
(\ref{ModelAssumptions}). Then by Lebesgue's dominated convergence
theorem we conclude
\begin{equation}
\label{S22b} \forall \eta \in \Gamma_0 \qquad \Psi_\sigma (\eta) \to
\Psi(\eta).
\end{equation}
At the same time, the integral over $[0,t]\times \Gamma_0$ in
(\ref{S22a}) is bounded by
\begin{gather*}
 c_1^{\rm max} \int_0^t \left(\int_{\Gamma_0} |\eta|^2e^{-(\alpha_2 -
\alpha _1)|\eta|} \left| G_s (\eta)\right|e^{\alpha_2|\eta|} \lambda
(d\eta)\right) d s \\[.2cm] \leq \frac{4c_1^{\rm max}
}{\left(e(\alpha_2-\alpha_1) \right)^2} \int_0^t |G_s|_{\alpha_2} ds
\leq \frac{4c_1^{\rm max} t T(\alpha_*,
\alpha_2)}{\left(e(\alpha_2-\alpha_1) \right)^2\left(T(\alpha_*,
\alpha_2)-t \right)},
\end{gather*}
where the latter estimate is obtained by claim (a) of Lemma
\ref{Slm}. This allows one to apply the Lebesgue dominated
convergence theorem to the mentioned integral in (\ref{S22a}), which
by (\ref{S22b}) yields
\begin{equation*}
\Upsilon^1_\sigma(t) \to 0, \qquad \sigma\to 0^+,
\end{equation*}
whenever $t$ satisfies (\ref{S21}).

The second summand in the right-hand side of (\ref{S21a}) is
\begin{eqnarray}
\label{S27} \Upsilon^2_\sigma(t) &= & \intGamN G(\eta)\left(
\int_{0}^{t} Q_{\alpha_* \alpha_2}(t-s) \left[B_{\alpha_2
\alpha_1}-B_{\alpha_2 \alpha_1}^\sigma\right] k^\sigma_s(\eta) ds
\right)\lambda(d\eta)
    \\[.2cm] \nonumber
& = & \int_{0}^{t}\left( \intGamN G_{t-s}(\eta) \left[B_{\alpha_2
\alpha_1}-B_{\alpha_2 \alpha_1}^\sigma \right] k_s^\sigma(\eta)
\lambda(d\eta)\right) ds\\[.2cm] \nonumber & = &
\Upsilon^{2,1}_\sigma(t) +\cdots + \Upsilon^{2,5}_\sigma(t),
\end{eqnarray}
For $i=1,2,3$, the summands $\Upsilon^{2,i}_\sigma(t)$ correspond to
$L^\Delta_{1i}$ with the same $i$; for $i=4,5$, they correspond to
$L^\Delta_{21}$ and $L^\Delta_{22}$, respectively, see (\ref{O1}).
To estimate $\Upsilon^{2,1}_\sigma(t)$, by \eqref{Minlos1},
\eqref{Minlos2} and (\ref{KNormEstimate}) we obtain
\begin{eqnarray}
\label{S27a}
 & & \left\vert\intGamN G_{t-s}(\eta) (L_{11}^\Delta -
L_{11}^{\Delta, \sigma})_{\alpha_2\alpha_1} k^\sigma_s(\eta)
\lambda(d\eta) \right\vert
    \\[.2cm] \nonumber
& &  \leq  \frac{1}{2} \intGamN |G_{t-s}(\eta)|
\int\limits_{\left(\Rd\right)^2} \suml{z\in \eta} (1 - \psi_{\sigma}
(z)) c_1(x,y;z) |k^\sigma_s(\eta \bs z \cup \{x,y\})| dx dy
\lambda(d\eta)
    \\[.2cm] \nonumber
& &   =  \intGamN \intRd\suml{\{x,y\} \subset \eta}  |G_{t-s}(\eta
\bs \{x,y\} \cup z)| (1 - \psi_{\sigma} (z)) c_1(x,y;z) dz
|k^\sigma_s(\eta)| \lambda(d\eta)   \\[.2cm] \nonumber
& &
     \leq  ||k^\sigma_s||_{\alpha_1} \intRd (1 - \psi_{\sigma}
(z)) \intGamN \suml{\{x,y\} \subset \eta}  |G_{t-s}(\eta \bs \{x,y\}
\cup z)|  c_1(x,y;z)   e^{\alpha_1 |\eta|} \lambda(d\eta) dz.
\end{eqnarray}
By this estimate we then get
\begin{eqnarray}
  \label{S28}
\left\vert \Upsilon^{2,1}_\sigma(t) \right\vert & = &  \left|
\int_{0}^{t} \intGamN G_{t-s}(\eta) (L_{11}^\Delta - L_{11}^{\Delta,
\sigma})_{\alpha_2\alpha_1} k^\sigma_s(\eta) \lambda(d\eta) ds  \right| \\[.2cm]
\nonumber & \leq & \intRd (1 - \psi_{\sigma} (z)) g(z) dz
\end{eqnarray}
where
\begin{gather*}
 g(z) = \intGamN \int_0^t \|k^\sigma_s\|_{\alpha_1} \left(\suml{\{x,y\} \subset \eta}  |G_{t-s}(\eta \bs \{x,y\} \cup z)|
 c_1(x,y;z)   e^{\alpha_1 |\eta|} \lambda(d\eta)\right) ds .
\end{gather*}
Let us show that $g$ is integrable whenever $t$ (hence $s$ and
$t-s$) satisfy (\ref{S21}). To this end by \eqref{Minlos1},
\eqref{Minlos2} and claim (a) of Lemma \ref{Slm} we obtain
\begin{eqnarray}
\label{S29} & &     \intRd g(z) dz \qquad \qquad  \\[.2cm] \nonumber & & \quad = \int_0^t
\|k^\sigma_s\|_{\alpha_1}\left( \intRd  \intGamN \suml{\{x,y\}
\subset \eta}  |G_{t-s}(\eta \bs \{x,y\} \cup z)|  c_1(x,y;z)
e^{\alpha_1 |\eta|} \lambda(d\eta) dz \right)ds
 \qquad  \\[.2cm] \nonumber & & \
    = \frac{e^{\alpha_1}}{2}  \int_0^t \|k^\sigma_s\|_{\alpha_1}
\intGamN e^{\alpha_2 |\eta|} |G_{t-s}(\eta)| e^{-(\alpha_2 -
\alpha_1) |\eta|} \left(\suml{z \in \eta} \int\limits_{(\Rd)^2}
c_1(x,y;z) dx dy \right) \lambda(d\eta) ds
    \\[.2cm] \nonumber & & \
    \leq  \frac{e^{\alpha_1} \cOne}{2(\alpha_2 - \alpha_1) e}
 \int_0^t \|k^\sigma_s\|_{\alpha_1} |G_{t-s}|_{\alpha_2}  ds
   \leq \frac{e^{\alpha_1} \cOne }{2(\alpha_2 - \alpha_1) e}
D_t(\alpha_2,\alpha_1),
\end{eqnarray}
with
\begin{equation}
  \label{S26}
D_t (\alpha_2, \alpha_1) = \frac{t T(\alpha_*, \alpha_2) T(\alpha_1,
\alpha_0)}{(T(\alpha_*, \alpha_2) - t)(T(\alpha_1, \alpha_0)-t)}.
\end{equation}
Then by (\ref{S28}) we obtain that
\begin{equation}
  \label{S26a}
\Upsilon^{2,1}_\sigma(t) \to 0, \qquad \sigma\to 0^+,
\end{equation}
whenever $t$ satisfies (\ref{S21}). By the literal repetition of the
arguments yielding (\ref{S26a}) we prove the same convergence to
zero also for $\Upsilon^{2,2}_\sigma(t)$ and
$\Upsilon^{2,3}_\sigma(t)$, cf. (\ref{O1}). To estimate
$\Upsilon^{2,4}_\sigma(t)$ similarly as in (\ref{S27a}), we write
\begin{eqnarray}
\label{S26b} \left|\Upsilon^{2,4}_\sigma(t)\right| & = & \left|
\int_{0}^{t} \intGamN G_{t-s}(\eta) (L_{21}^\Delta - L_{21}^{\Delta,
\sigma})_{\alpha_2\alpha_1} k^\sigma_s(\eta) \lambda(d\eta) ds
\right|
\\[.2cm] \nonumber
& \leq & \intRd  (1 - \psi_{\sigma}(y)) \int_{0}^{t}\bigg{(}
\intGamN \suml{x \in \eta} c_2(x;y) |G_{t-s}(\eta \bs x \cup y)|
\\[.2cm] \nonumber
& \times & \intGamN |k_s^\sigma(\eta \cup \xi)|  \prodl{u \in \xi}
\left( 1 - e^{-\phi(y-u)} \right) \lambda(d\xi)
\lambda(d\eta)\bigg{)} ds dy\\[.2cm] \nonumber
& = &  \intRd  (1 - \psi_{\sigma}(y)) h(y) dy,
\end{eqnarray}
with
\begin{eqnarray*}
h(y) & = & \int_{0}^{t} \intGamN \suml{x \in \eta} c_2(x-y)
|G_{t-s}(\eta \bs x \cup y)| \\[.2cm] \nonumber
& \times & \bigg{(} \intGamN |k_s^\sigma(\eta \cup \xi)|  \prodl{u
\in \xi} \left( 1 - e^{-\phi(y-u)} \right) \lambda(d\xi) \bigg{)}
\lambda(d\eta) ds.
\end{eqnarray*}
Analogously as in (\ref{S29}), we have
\begin{align*}
 \intRd h(y) dy &\leq \exp(\phinorm e^{\alpha_1}) \int_{0}^{t} \|k_s^\sigma\|_{\alpha_1} \intGamN e^{\alpha_1 |\eta|}
 |G_{t-s}(\eta)| \suml{y \in \eta} \ \intRd  c_2(x-y)  dx \lambda(d\eta) ds
    \\[.2cm]
    &\leq   \frac{\cTwo}{(\alpha_2 - \alpha_1) e} t D_t(\alpha_2,\alpha_1)\exp(\phinorm e^{\alpha_1}),
\end{align*}
where $D_t$ is the same as in (\ref{S26}). Then we apply  the same
dominated convergence theorem in the last line of (\ref{S26b}) and
obtain that
\begin{equation*}
\Upsilon^{2,4}_\sigma(t) \to 0, \qquad \sigma\to 0^+,
\end{equation*}
whenever $t$ satisfies (\ref{S21}). The proof of the same
convergence for $\Upsilon^{2,5}_\sigma(t)$ is completely analogous.
Thus, all the summands in the last line of (\ref{S27}) tend to zero
as $\sigma\to 0^{+}$ -- that yields (\ref{S20}) -- whenever $t$
satisfies (\ref{S21}). It remains to prove that, for each $t<
T(\alpha_*,\alpha_0)/2$, one can pick $\alpha_1, \alpha_2 \in
(\alpha_0, \alpha_*)$ such that (\ref{S21}) holds for these
$\alpha_2$ and $\alpha_1$. To this end, we fix
$t<T(\alpha_*,\alpha_0)/2$, take $\alpha_1 = (\alpha_* +
\alpha_0)/2$ and $\alpha_2 = \alpha_1 + \epsilon \beta(\alpha_*)$
with $\epsilon
> 0$ being chosen later and such that $\alpha_2 < \alpha_*$. For this choice, by
(\ref{o26}) we have that $T(\alpha_1, \alpha_0) \geq \frac{1}{2}
T(\alpha_*,\alpha_0)>t$ since $\beta(\theta)$ is increasing. At the
same time, $T(\alpha_*, \alpha_2)) +\epsilon =
\frac{1}{2}T(\alpha_*,\alpha_0)$. Then we take $\epsilon =
(\frac{1}{2} T(\alpha_*,\alpha_0) - t)/2$ in the choice of
$\alpha_2$, which yields $t< T(\alpha_*, \alpha_2)$.
\end{proof}

\subsection{Approximations}\label{Subsection:LocalEvolution}

Our aim now is to prove that $k^\sigma_t =
Q^\sigma_{\alpha_*\alpha_0} (t)k_0$ has the positivity property
defined in (\ref{o7}) whenever $k_0$ is the correlation
function of a certain $\mu_0 \in \mathcal{P}_{\rm exp}$. Then, for
$t<T(\alpha_*, \alpha_0)/2$, the same positivity property of $k_t=
Q_{\alpha_*\alpha_0} (t)k_0$ will follow by Lemma \ref{S1lm}.
Similarly as in \cite{Kawasaki2,SpatialEcologicalModel}, the main
idea of proving the positivity of $k^\sigma_t$ is to approximate it
by a correlation function of a finite system of this kind, which is
positive by Proposition \ref{CorFunProp}. Thereafter, one has to
prove that the positivity is preserved when the approximation is
eliminated.

\subsubsection{The approximate evolution} For a compact $\Lambda\subset \mathds{R}^d$, let
$\mu_0^{\Lambda}$ be the projection of the initial state $\mu_0$,
see (\ref{Projection}). Then its density $R^\Lambda_{\mu_0}$ and the
correlation function $k_0$ are related to each other in
(\ref{CorAndDens}). For $N\in \mathds{N}$, we set
\begin{equation}\label{DensExt}
    R_0^{\Lambda, N} (\eta) =
    \begin{cases}
        R_{\mu_0}^\Lambda(\eta) &\text{if } |\eta| \leq N \ \text{and} \ \eta \in \Gamma_\Lambda,
        \\[.2 cm]
        0 &\text{otherwise.}
    \end{cases}
\end{equation}
Note that $R_0^{\Lambda, N}:\Gamma_0 \to \mathds{R}$, unlike to
$R^\Lambda_{\mu_0}$ which is a function on $\Gamma_\Lambda$. Now we
define
\begin{equation}\label{kLambdaN}
    k_0^{\Lambda, N}(\eta) = \int\limits_{\Gamma_\Lambda} R_0^{\Lambda, N}(\eta \cup \xi) \lambda(d\xi), \qquad \eta \in \Gamma_0,
\end{equation}
By (\ref{DensExt}) and (\ref{kLambdaN}) we have that  $k_0^{\Lambda,
N} \leq k_{\mu_0}$, so that $k_{\mu_0} \in \mathcal{K}_{\alpha_0}$
implies $k_0^{\Lambda, N} \in \mathcal{K}_{\alpha_0}$. Then by
Theorem \ref{Th1},
\begin{equation}
  \label{A1}
k_t^{\Lambda, N} = Q^\sigma_{\alpha_0 \alpha_*}(t) k_0^{\Lambda, N}
\in \mathcal{K}_{\alpha_*}
\end{equation}
is the unique classical solution of the problem
\begin{equation*}
\frac{d}{dt} k_t^{\Lambda, N} = L^{\Delta, \sigma}_{\alpha_*}
k_t^{\Lambda, N}, \qquad k_t|_{t=0}^{\Lambda, N} = k_0^{\Lambda, N}
\in \mathcal{K}_{\alpha_0},
\end{equation*}
on the time interval $[0,T(\alpha_*,\alpha_0))$.
\begin{lemma}
  \label{Alm}
Let $k_t^{\Lambda,N}$, $t< T(\alpha_*,\alpha_0)$ be as in
(\ref{A1}). Then, for each $G\in \mathcal{B}_{bs}^*$ and $t<
T(\alpha_*,\alpha_0)$, the following holds
\begin{equation}
  \label{A2}
\langle \! \langle G,k_t^{\Lambda,N}\rangle \! \rangle \geq 0.
\end{equation}
\end{lemma}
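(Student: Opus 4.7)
The plan is to realise $k_t^{\Lambda,N}$ as the correlation function of an honest (sub-)probability measure $\mu_t^{\Lambda,N}$ on $\Gamma_0$, whose density $R_t^{\Lambda,N}$ is constructed directly from the forward Kolmogorov equation dual to $L^\sigma$. Since $R_0^{\Lambda,N}$ is supported on configurations of cardinality at most $N$, and the dynamics generated by $L^\sigma$ never increases $|\eta|$, this density remains in a bounded--size sector for all $t$; Minlos' lemma will then turn its non-negativity into the required inequality for $G\in \mathcal{B}_{bs}^*$.

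First, I would construct $R_t^{\Lambda,N}$. Let $L^{\sigma,*}$ denote the formal $\lambda$-adjoint of $L^\sigma$, characterised by $\int (L^\sigma F)\,R\,d\lambda = \int F\,(L^{\sigma,*}R)\,d\lambda$. Computing it by means of (\ref{Minlos1})--(\ref{Minlos2}) (a mirror of how $L^\Delta$ is derived from $L$), one sees that $L^{\sigma,*}$ acts layerwise on $R = \{R^{(n)}\}_{n\ge 0}$: the equation for $R^{(n)}_t$ couples $R^{(n)}_t$ to itself (through jumps and through the outgoing coalescence loss $-\Psi_\sigma$) and to $R^{(n+1)}_t$ through an incoming coalescence kernel. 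Because $\psi_\sigma \le 1$ and the assumptions (\ref{ModelAssumptions}) make all kernels bounded, each layer equation is governed by a bounded linear operator in $L^\infty((\mathds{R}^d)^n)$ with non-negative off-diagonal part and a non-positive diagonal (multiplication) part. Solving downwards from $n=N$ (where the source term is absent since $R_0^{(N+1)}=0$) to $n=0$ by variation of constants yields a unique classical solution with $R^{(n)}_t\ge 0$, using $R^{(n)}_0\ge 0$ from (\ref{DensExt}).

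Next I would identify
\begin{equation*}
\tilde k_t(\eta) := \int_{\Gamma_0} R_t^{\Lambda,N}(\eta\cup\xi)\,\lambda(d\xi)
\end{equation*}
with $k_t^{\Lambda,N}$. At $t=0$ this is precisely (\ref{kLambdaN}). The uniform bound $|R_t^{\Lambda,N}|\le C_N$ together with its support in $\bigsqcup_{n=0}^N \Gamma^{(n)}$ gives $\tilde k_t \in \mathcal{K}_\theta$ for every $\theta\in\mathds{R}$, and in particular $\tilde k_t \in \mathcal{D}_{\alpha_*}$. Differentiating under the integral (legitimate thanks to the same bounds) and using the intertwining $\mathcal{K}\circ L^{\sigma,*} = L^{\Delta,\sigma}\circ \mathcal{K}$ that is built into (\ref{o2})--(\ref{o9}) for the map $(\mathcal{K}R)(\eta) := \int R(\eta\cup\xi)\lambda(d\xi)$, one obtains that $\tilde k_t$ solves the Cauchy problem (\ref{SigmaEv}) with datum $k_0^{\Lambda,N}$. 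Uniqueness for (\ref{SigmaEv}) -- inherited from Theorem \ref{Th1} applied to the $\sigma$-model -- then forces $\tilde k_t = k_t^{\Lambda,N}$.

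Finally, by Minlos' lemma,
\begin{equation*}
\langle\!\langle G, k_t^{\Lambda,N}\rangle\!\rangle = \int_{\Gamma_0} G(\eta)\int_{\Gamma_0} R_t^{\Lambda,N}(\eta\cup\xi)\,\lambda(d\xi)\,\lambda(d\eta) = \int_{\Gamma_0} (KG)(\zeta)\, R_t^{\Lambda,N}(\zeta)\,\lambda(d\zeta) \ge 0
\end{equation*}
for every $G\in \mathcal{B}_{bs}^*$, thanks to (\ref{o8}) and the positivity of $R_t^{\Lambda,N}$. I expect the technical heart of the argument to be the identification step: computing $L^{\sigma,*}$ cleanly from (\ref{OperatorLSigma}) and matching its action on $\tilde k_t$ against the four coalescence summands in (\ref{O1}) (and the two jump summands in $L^{\Delta,\sigma}_2$) requires a somewhat intricate accounting with (\ref{Minlos1})--(\ref{Minlos2}), and one must also verify enough $t$-regularity of $R_t^{\Lambda,N}$ (hence of $\tilde k_t$) to place it in Definition \ref{Odf} and invoke uniqueness.
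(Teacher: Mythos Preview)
Your overall strategy is exactly the paper's: build a non-negative density $R_t^{\Lambda,N}$ solving the Fokker--Planck equation dual to $L^\sigma$, form its correlation function $\tilde k_t$, identify $\tilde k_t$ with $k_t^{\Lambda,N}$ by uniqueness, and read off (\ref{A2}) via Minlos. The layerwise construction of $R_t^{\Lambda,N}$ in $L^\infty$ is a nice simplification of the paper's Thieme--Voigt semigroup argument and does work, since on each fixed layer $\Gamma^{(n)}$ the function $E^\sigma$ is bounded and the off-diagonal parts are positive bounded operators.

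There is, however, a genuine gap in your identification step. The claim ``$|R_t^{\Lambda,N}|\le C_N$ together with support in $\bigsqcup_{n\le N}\Gamma^{(n)}$ gives $\tilde k_t\in\mathcal{K}_\theta$'' is not justified: for $|\eta|<N$ one has
\[
\tilde k_t(\eta)=\sum_{m=0}^{N-|\eta|}\frac{1}{m!}\int_{(\mathds{R}^d)^m} R_t^{(|\eta|+m)}(\eta,y_1,\dots,y_m)\,dy_1\cdots dy_m,
\]
and a uniform pointwise bound on $R_t$ does \emph{not} control these integrals over the non-compact $(\mathds{R}^d)^m$ (nor does $R_t\in L^1(\Gamma_0,\lambda)$, since the required bound is of mixed $L^\infty_\eta L^1_y$ type). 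After time $0$ the density is no longer supported in $\Gamma_\Lambda$, because jumps and coalescence send particles anywhere in $\mathds{R}^d$ with the $\psi_\sigma$ weight. Without $\tilde k_t\in\mathcal{K}_{\alpha_*}$ you cannot invoke the uniqueness from Theorem~\ref{Th1}, and the identification $\tilde k_t=k_t^{\Lambda,N}$ collapses.

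This is precisely the obstacle the paper's machinery is built to overcome. The paper places $q_t^{\Lambda,N}$ in the weighted $L^1$-spaces $\mathcal{G}^{\mathrm{fac}}_\theta$ (Lemma~\ref{A2lm}), places $k_t^{\Lambda,N}$ in $\mathcal{K}_{\alpha_*}$, and then introduces the intermediate spaces $\mathcal{U}^\sigma_\theta$ with norm $\esssup_\eta |u(\eta)|e^{-\theta|\eta|}/e(\psi_\sigma;\eta)$. The Gaussian weight $e(\psi_\sigma;\cdot)$ makes $\mathcal{U}^\sigma_\theta$ embed simultaneously into $\mathcal{K}_\theta$ and into $\mathcal{G}^{\mathrm{fac}}_{\theta'}$ for suitable $\theta'$ (see (\ref{USubsetG})); a third existence/uniqueness result in $\mathcal{U}^\sigma_{\alpha_*}$ (Lemma~\ref{A3lm}) then produces a solution that, by the two respective uniqueness statements, coincides with both $k_t^{\Lambda,N}$ and $q_t^{\Lambda,N}$ (Corollary~\ref{Aco}). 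Note that the embedding (\ref{USubsetG}) fails for $\sigma=0$ --- this is the real reason the auxiliary model $L^\sigma$ was introduced in the first place, and your sketch does not make use of it.
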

The  proof of this statement will follow by Corollary \ref{Aco}
proved below in which we show that
\begin{equation}
  \label{A3}
k_t^{\Lambda,N} = q_t^{\Lambda,N},
\end{equation}
where
\begin{equation}\label{LocalCorFun}
q_t^{\Lambda, N} (\eta) = \intGamN R_t^{\Lambda, N} (\eta \cup \xi)
\lambda(d\xi),\qquad t \geq 0.
\end{equation}
Here $R_t^{\Lambda, N}$ is the (non-normalized) density obtained
from $R_0^{\Lambda, N}$ given in (\ref{DensExt}) in the course of
the evolution related to $L^\sigma$. By this fact, $q_t^{\Lambda,
N}$ satisfies (\ref{A2}), which will yield the proof. According to
this, we proceed by constructing the evolution $R_0^{\Lambda, N}\to
R_t^{\Lambda, N}$, which will allow us to use $q_t^{\Lambda, N}$
defined in (\ref{LocalCorFun}). The next step will be to prove
(\ref{A3}).

\subsubsection{The local evolution}
As just mentioned, the evolution $R_0^{\Lambda, N}\to R_t^{\Lambda,
N}$ is related to the local evolution of the auxiliary model
described by $L^\sigma$, see subsection
\ref{Subsection:AuxiliaryModel}. Here \emph{local} means the
following. Assume that the initial state $\nu_0$ is such that $\nu_0
(\Gamma_0)=1$. That is, the system is finite and hence local. Assume
also that it has density $R_{\nu_0} = \frac{d \nu_0}{d\lambda}$.
Then the evolution related to the  Kolmogorov equation with
$L^\sigma$ can be described as the evolution of densities by solving
the  corresponding Fokker-Planck equation
\begin{equation}
  \label{A4}
\frac{d}{dt} R_t = L^\dagger R_t, \qquad R_t|_{t=0} = R_{\nu_0},
\end{equation}
where $L^\dagger$ is related to $L^\sigma$ according to the rule
\begin{equation}
  \label{A4a}
\int_{\Gamma_0} (L^\sigma F)(\eta) R(\eta) \lambda (d \eta) =
\int_{\Gamma_0} F(\eta) (L^\dagger R)(\eta) \lambda (d \eta),
\end{equation}
by which and  \eqref{OperatorLSigma}, \eqref{Minlos1} and
\eqref{Minlos2} one obtains
\begin{eqnarray}
\label{A5}
(L^{\dag}R)(\eta) & = & \frac{1}{2} \suml{z \in \eta} \int\limits_{(\mathds{R}^d)^2} \psi_\sigma(z) c_1(x,y;z)
 R\big(\eta\cup \{x,y \} \bs z \big) dx dy  \\[.2cm] \nonumber & + &
\suml{y \in \eta} \intRd \psi_\sigma(y) c_2(x-y) \prodl{u \in \eta
\bs y} e^{-\phi(y-u)} R(\eta \cup x \bs y) dx \\[.2cm] \nonumber & - & E^\sigma(\eta)
R(\eta),
\end{eqnarray}
where
\begin{eqnarray}
  \label{A5b}
E^\sigma & = & E^\sigma_1 + E^\sigma_2, \\[.2cm] \nonumber
E^\sigma_1(\eta) & = & \suml{\{x,y\} \subset \eta} \  \intRd
\psi_\sigma(z) c_1(x,y;z) dz,
\\[.2cm] \nonumber
 E^\sigma_2(\eta) & = & \suml{x \in \eta} \  \intRd
\psi_\sigma(y) \tilde{c}_2(x;y;\eta) dy.
\end{eqnarray}
Our aim now is to show that $L^\dagger$ is the generator of a
stochastic semigroup $S^\dagger=\{S^\dagger(t)\}_{t\geq 0}$, which
we will use to obtain $R^{\Lambda,N}_t$ in the form $S^\dagger
(t)R^{\Lambda,N}_0$. In doing this, we follow the scheme
developed in \cite[Sect. 3.1]{Kawasaki2}.

The semigroup $S^\dagger$ is supposed to act in the space
$\mathcal{G}_0$, see (\ref{GThetaSpace}). Along with this space we
will also use
\begin{equation}
  \label{A7}
\mathcal{G}_\theta^{fac}  =\{G:\Gamma_0\to \mathds{R}: |G|_{{\rm
fac}, \theta} <\infty\}, \quad |G|_{{\rm fac}, \theta}:=
\int_{\Gamma_0} |G(\eta)| |\eta|! e^{\theta|\eta|} \lambda (d \eta).
\end{equation}
Clearly, for each $\theta\in \mathds{R}$ and $\theta'>\theta$, we
have that
\begin{equation}
  \label{A7a}
\mathcal{G}_\theta^{fac} \hookrightarrow \mathcal{G}_0, \qquad
\mathcal{G}_{\theta'}^{fac} \hookrightarrow
\mathcal{G}_{\theta}^{fac}.
\end{equation}
Note that these are $AL$-spaces, which means that their norms are
additive on the corresponding cones of positive elements
\begin{equation*}
\mathcal{G}^+_0 =\{ G\in \mathcal{G}_0: G(\eta) \geq 0\}, \qquad
\mathcal{G}_\theta^{fac,+} = \{G\in \mathcal{G}_\theta^{fac} :
G(\eta) \geq 0\}.
\end{equation*}
These cones naturally define the cones of positive operators acting
in the corresponding spaces. It is also convenient to relate this
property to the following linear functionals
\begin{equation}
 \label{A9}
\varphi (G) = \int_{\Gamma_0} G(\eta) \lambda (d \eta), \qquad
\varphi^{fac}_\theta (G) = \int_{\Gamma_0} G(\eta)|\eta|!
e^{\theta|\eta|} \lambda (d \eta).
\end{equation}
Then $\varphi(G) =|G|_0$ and $\varphi^{fac}_\theta (G)=|G|_{{\rm
fac},\theta}$ for $G\in \mathcal{G}_0^+$ and $G\in
\mathcal{G}_\theta^{fac,+}$, respectively.

To formulate (\ref{A4}) in the Banach space
$\mathcal{G}_0$, we have to define the corresponding domain of
$L^\dagger$. To this end, we write
\begin{equation}
  \label{A10}
L^\dagger = A^\dagger + B^\dagger, \qquad (A^\dagger G)(\eta) = -
E^\sigma (\eta) G(\eta),
\end{equation}
and then set
\begin{equation}
  \label{A11}
\mathcal{D}^\dagger = \{G\in \mathcal{G}_0: \int_{\Gamma_0} E^\sigma
(\eta) |G(\eta)|\lambda (d \eta) < \infty\}.
\end{equation}
By means of (\ref{Minlos1}) we obtain that
\begin{equation}
  \label{A12}
\int_{\Gamma_0} |(B G)(\eta)| \lambda (d\eta) \leq \int_{\Gamma_0}
E^\sigma (\eta) |G(\eta)|\lambda (d \eta).
\end{equation}
By (\ref{A12}) and the evident positivity of $B$ it follows that
\begin{equation}
  \label{A13}
B :\mathcal{D}^{\dagger}_+ \to \mathcal{G}_0^{+}, \qquad
\mathcal{D}^{\dagger}_+ := \mathcal{D}^{\dagger}\cap
\mathcal{G}_0^+.
\end{equation}
Define
\begin{equation}
  \label{A14}
\mathcal{D}^\dagger_\theta = \{ G \in \mathcal{G}_\theta^{fac}:
A^\dagger G \in \mathcal{G}_\theta^{fac}\}, \qquad A^\dagger_\theta
= A^\dagger|_{\mathcal{D}^\dagger_\theta}.
\end{equation}
That is, $A^\dagger_\theta$ is the restriction of $A^\dagger$ to
$\mathcal{D}^\dagger_\theta$ -- the trace of $A^\dag$ in
$\mathcal{G}_\theta^{fac}$. The construction of the semigroup
$S^\dagger$ is performed by means of the perturbation technique
developed in \cite{ThiemeVoigt}. We formulate here the corresponding
statement borrowed from \cite[Proposition 3.2]{Kawasaki2} in the
form adapted to the present context.
\begin{proposition}
 \label{Apn}
Assume that the operators introduced in (\ref{A10}), (\ref{A11})
have the following properties:
\begin{itemize}
  \item[(i)] $-A^\dagger: \mathcal{D}^{\dagger}_+ \to
\mathcal{G}_0^{+}$ and $B^\dagger: \mathcal{D}^{\dagger}_+ \to
\mathcal{G}_0^{+}$;
\item[(ii)] $(A^\dagger,\mathcal{D}^\dagger)$ is the generator of a sub-stochastic semigroup $S^\dagger_0 =
\{S_0^\dagger(t)\}_{t\geq 0}$ on $\mathcal{G}_0$ such that, for all
$t>0$, $S_0^\dagger(t): \mathcal{G}_\theta^{fac} \to
\mathcal{G}_\theta^{fac}$ and the restrictions $S_0^\dagger(t)|_{
\mathcal{G}_\theta^{fac}}$ constitute a $C_0$-semigroup of operators
on $\mathcal{G}_\theta^{fac}$ generated by $(A^\dagger_\theta,
\mathcal{D}^\dagger_\theta)$ defined in (\ref{A14});
\item[(iii)] $B^\dagger :  \mathcal{D}^\dagger_\theta \to
\mathcal{G}^{fac}_\theta$ and $\varphi ((A^\dagger+B^\dagger)G) =0$
for all $G\in \mathcal{D}^\dagger_+$;
\item[(iv)] there exsist positive $c$ and $\varepsilon$ such that
\[
\varphi_\theta^{fac} \left((A^\dagger + B^\dagger)G \right) \leq c
\varphi_\theta^{fac} (G) - \varepsilon |A^\dagger G|_0 , \quad {\rm
for} \ {\rm all} \  G\in \mathcal{D}^\dagger_\theta \cap
\mathcal{G}_0^{+}.
\]
\end{itemize}
Then the closure of $(A^\dagger, \mathcal{D}^\dagger)$ in
$\mathcal{G}_0$ is the generator of a stochastic semigroup
$S^\dagger = \{S^\dagger(t)\}_{t\geq 0}$ in $\mathcal{G}_0$ that
leaves $\mathcal{G}^{fac}_\theta$ invariant.
\end{proposition}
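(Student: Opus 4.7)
The claim is an instance of the Thieme-Voigt perturbation theorem \cite{ThiemeVoigt}, and the plan is to verify its hypotheses in $\mathcal{G}_0$, upgrade the resulting sub-stochastic semigroup to a stochastic one using (iii), and then transfer the construction to the finer space $\mathcal{G}^{fac}_\theta$ using (iv).

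First, I would work in $\mathcal{G}_0$. By hypothesis (ii), $(A^\dagger,\mathcal{D}^\dagger)$ generates a positive sub-stochastic $C_0$-semigroup $S_0^\dagger$, and hypothesis (i) together with (\ref{A12}) makes $B^\dagger$ a positive perturbation dominated on the positive cone by $-A^\dagger$ in the sense $|B^\dagger G|_0 \le |A^\dagger G|_0$. The Kato-Voigt perturbation theorem then gives that the closure of $(A^\dagger+B^\dagger,\mathcal{D}^\dagger)$ in $\mathcal{G}_0$ generates a positive sub-stochastic $C_0$-semigroup $S^\dagger$, realized on $\mathcal{G}_0^+$ as the monotone limit of the Dyson-Phillips series
\[
S^\dagger(t)G=\sum_{n=0}^\infty U_n(t)G,\qquad U_0(t)=S_0^\dagger(t),\qquad U_{n+1}(t)G=\int_0^t S_0^\dagger(t-s)B^\dagger U_n(s)G\,ds.
\]
To promote this to a stochastic semigroup, one applies $\varphi$ from (\ref{A9}) to the Duhamel identity $S^\dagger(t)G=S_0^\dagger(t)G+\int_0^t S_0^\dagger(t-s)B^\dagger S^\dagger(s)G\,ds$ and uses the conservation law $\varphi((A^\dagger+B^\dagger)G)=0$ from (iii): this is the Thieme-Voigt honesty criterion and yields $\varphi(S^\dagger(t)G)=\varphi(G)$ on $\mathcal{G}_0^+$, i.e.\ $S^\dagger$ is stochastic.

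Second, for the invariance of $\mathcal{G}^{fac}_\theta$, I would rerun the perturbation construction inside this smaller $AL$-space. By (ii), $(A^\dagger_\theta,\mathcal{D}^\dagger_\theta)$ generates a $C_0$-semigroup on $\mathcal{G}^{fac}_\theta$; by the first part of (iii), $B^\dagger$ maps $\mathcal{D}^\dagger_\theta$ into $\mathcal{G}^{fac}_\theta$; and (iv) in the form $\varphi_\theta^{fac}((A^\dagger+B^\dagger)G)\le c\,\varphi_\theta^{fac}(G)$ is the required dissipativity on the positive cone of $\mathcal{G}^{fac}_\theta$ after the shift by $c$, while the sharper version with the extra $-\varepsilon|A^\dagger G|_0$ margin supplies the Miyadera-type domination needed to iterate the perturbation argument inside $\mathcal{G}^{fac}_\theta$. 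Applying Thieme-Voigt there yields a $C_0$-semigroup $\tilde S^\dagger$ on $\mathcal{G}^{fac}_\theta$ with growth bound $e^{ct}$; a uniqueness argument via coinciding Dyson-Phillips representations along the embedding $\mathcal{G}^{fac}_\theta\hookrightarrow\mathcal{G}_0$ from (\ref{A7a}) identifies $\tilde S^\dagger(t)$ with $S^\dagger(t)|_{\mathcal{G}^{fac}_\theta}$, establishing the invariance.

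The main obstacle is the honesty step. Sub-stochasticity follows from positivity alone, but the equality $\varphi(S^\dagger(t)G)=\varphi(G)$ requires the termwise application of $\varphi$ to the Dyson-Phillips series together with a Fatou-type argument, and this interchange rests crucially on the bound (\ref{A12}); without it, mass can be lost in the limit even when (iii) holds pointwise on $\mathcal{D}^\dagger_+$. The analogous role inside $\mathcal{G}^{fac}_\theta$ is played by the deficit term $-\varepsilon|A^\dagger G|_0$ in (iv), which prevents mass from escaping along the configuration-size direction and is what makes the iteration in the smaller space converge.
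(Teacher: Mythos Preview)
Your identification of the Thieme--Voigt perturbation theorem as the source is exactly right, and your sketch of how the hypotheses feed into that machinery is reasonable. However, you should know that the paper does not prove this proposition at all: it is explicitly stated as a result ``borrowed from \cite[Proposition 3.2]{Kawasaki2}'' and ultimately from \cite{ThiemeVoigt}, reformulated for the present setting. So there is no ``paper's own proof'' to compare against; the proposition functions as a quoted black box.

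That said, your outline is in the correct spirit and matches the structure of the Thieme--Voigt argument as deployed in \cite{Kawasaki2}. One small caution: you invoke the bound (\ref{A12}) as part of your honesty argument, but (\ref{A12}) is a property specific to the operators of this model, established \emph{after} Proposition~\ref{Apn} is stated and used only in the verification (Lemma~\ref{A1lm}) that the abstract hypotheses (i)--(iv) hold here. The proposition itself is meant to be abstract, with (i)--(iv) as its only inputs; the Kato--Voigt domination you need in $\mathcal{G}_0$ should be derived from (i) and (iii) (specifically $\varphi(B^\dagger G)=-\varphi(A^\dagger G)=|A^\dagger G|_0$ on the positive cone), not from a model-specific estimate. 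Otherwise your two-step plan --- build $S^\dagger$ in $\mathcal{G}_0$, then rerun inside $\mathcal{G}^{fac}_\theta$ and identify via the embedding --- is the standard route and is what \cite{ThiemeVoigt} and \cite{Kawasaki2} do.
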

By means of this statement we prove the following.
\begin{lemma}
  \label{A1lm}
The closure of the operator $(A^\dagger,\mathcal{D}^\dagger)$
defined in (\ref{A10}), (\ref{A11}) generates a stochastic semigroup
$S^\dagger=\{S^\dagger(t)\}_{t\geq 0}$ of operators in
$\mathcal{G}_0$ that leaves invariant $\mathcal{G}_\theta^{fac}$
with any $\theta\in \mathds{R}$.
\end{lemma}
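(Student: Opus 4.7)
The plan is to apply Proposition~\ref{Apn} to the pair $(A^\dagger, B^\dagger)$ defined in (\ref{A10}), so the task reduces to verifying its four hypotheses. Conditions (i) and (ii) are essentially immediate: $E^\sigma \geq 0$ by (\ref{A5b}) and (\ref{ModelAssumptions}), so $-A^\dagger$ is positivity-preserving as a multiplication operator, and $B^\dagger$ is positivity-preserving since each kernel in (\ref{A5}) is non-negative. The multiplication operator $A^\dagger$ generates the explicit sub-stochastic $C_0$-semigroup $(S_0^\dagger(t) G)(\eta) = e^{-t E^\sigma(\eta)} G(\eta)$, which obviously leaves every $\mathcal{G}_\theta^{fac}$ invariant and has generator $(A_\theta^\dagger, \mathcal{D}_\theta^\dagger)$ from (\ref{A14}) when restricted.

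Condition (iii) splits into mass preservation and a mapping statement. For the former, I would take $F \equiv 1$ in the duality (\ref{A4a}): the operator $L^\sigma$ in (\ref{OperatorLSigma}) is a sum of jump-type differences $F(\gamma') - F(\gamma)$ which annihilates constants, so $\varphi((A^\dagger + B^\dagger) G) = \varphi(L^\dagger G) = 0$. The mapping property $B^\dagger : \mathcal{D}_\theta^\dagger \to \mathcal{G}_\theta^{fac}$ will follow from the Minlos computation below.

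The substantive step is (iv). The plan is to compute $\varphi_\theta^{fac}(B^\dagger G)$ for $G \in \mathcal{D}_\theta^\dagger \cap \mathcal{G}_0^+$ via (\ref{Minlos1}) and (\ref{Minlos2}), transferring the weight $|\eta|!\, e^{\theta|\eta|}$ onto $G(\eta)$ itself rather than onto its shifted argument. For the two terms making up $B_2^\dagger$, a pair of applications of (\ref{Minlos1}) (first removing $\sum_{y\in\eta}$ and the $dx$-integration, then reconstructing $\sum_{x\in\eta}$ and the $dy$-integration) preserves the cardinality and yields the exact identity
\begin{equation*}
 \varphi_\theta^{fac}(B_2^\dagger G) = \int_{\Gamma_0} |\eta|!\, e^{\theta|\eta|} E_2^\sigma(\eta) G(\eta)\, \lambda(d\eta) = -\varphi_\theta^{fac}(A_2^\dagger G),
\end{equation*}
so the jump contributions cancel out of $A^\dagger + B^\dagger$. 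For the coalescence gain in $B_1^\dagger$, the merger $\{x,y\}\to z$ drops the cardinality by one; one use of (\ref{Minlos1}) on $\sum_{z\in\eta}$ and a reverse use of (\ref{Minlos2}) on the $dx\, dy$ integral turn the weight into $(|\eta|-1)!\, e^{\theta(|\eta|-1)} = (|\eta|\, e^{\theta})^{-1}\, |\eta|!\, e^{\theta|\eta|}$. Since $E_1^\sigma$ is supported on $|\eta| \geq 2$, this produces
\begin{equation*}
 \varphi_\theta^{fac}\bigl((A^\dagger + B^\dagger) G\bigr) \leq -\!\int_{\Gamma_0}\! \bigl(1 - \tfrac{e^{-\theta}}{|\eta|}\bigr) |\eta|!\, e^{\theta|\eta|} E_1^\sigma(\eta) G(\eta)\, \lambda(d\eta).
\end{equation*}

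The main obstacle is converting this into the form $\leq c\, \varphi_\theta^{fac}(G) - \varepsilon\, |A^\dagger G|_0$ demanded by (iv). For $|\eta|$ large the bracket is bounded below by a positive constant, so that part dominates $-\varepsilon\, |A^\dagger G|_0$ after crudely estimating $|A^\dagger G|_0 \leq -\varphi_\theta^{fac}(A^\dagger G)$ on the support of $E^\sigma$. The potentially positive contribution from the bounded range of $|\eta|$ where $\tfrac{e^{-\theta}}{|\eta|} > 1$ (relevant only for $\theta$ sufficiently negative) is absorbed into $c\, \varphi_\theta^{fac}(G)$ using the polynomial bound $E^\sigma(\eta) \leq c_1^{\max}|\eta|^2/2 + \langle c_2\rangle |\eta|$ supplied by (\ref{ModelAssumptions}), with a constant $c$ depending on $\theta$. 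Once (iv) is verified for a given $\theta$, Proposition~\ref{Apn} yields a stochastic semigroup $S^\dagger$ on $\mathcal{G}_0$ leaving $\mathcal{G}_\theta^{fac}$ invariant; since the generator on $\mathcal{G}_0$ is always the closure of the same operator $(A^\dagger, \mathcal{D}^\dagger)$, the resulting semigroup is independent of $\theta$, and repeating the verification for each $\theta \in \mathds{R}$ delivers the invariance claim for every $\theta$.
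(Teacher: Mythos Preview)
Your approach is essentially the same as the paper's: verify the four hypotheses of Proposition~\ref{Apn}, with the heart of the matter being the Minlos computation showing that the jump contributions to $\varphi_\theta^{fac}((A^\dagger+B^\dagger)G)$ cancel exactly and the coalescence term produces the factor $(e^{-\theta}/|\eta|-1)E_1^\sigma(\eta)$. Your displayed formula is in fact an equality, matching the paper's identity $\varphi_\theta^{fac}((A^\dagger+B^\dagger)G)=\int (|\eta|-1)!\,e^{\theta|\eta|}E_1^\sigma(\eta)(e^{-\theta}-|\eta|)G(\eta)\,\lambda(d\eta)$.

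One caution: the inequality you invoke, $|A^\dagger G|_0 \le -\varphi_\theta^{fac}(A^\dagger G)$, is false in general (for $\theta<0$ and small $|\eta|$ the weight $|\eta|!\,e^{\theta|\eta|}$ drops below $1$), so your large-$|\eta|$/small-$|\eta|$ split needs a little more care to handle the $E_2^\sigma$ part of $|A^\dagger G|_0$. The paper sidesteps this by rewriting condition~(iv) pointwise as $W(c,\varepsilon;\eta)\le 0$ with
\[
W(c,\varepsilon;\eta)=\Bigl(\tfrac{e^{-\theta}}{|\eta|}-1\Bigr)E_1^\sigma(\eta)+\varepsilon\,\tfrac{e^{-\theta|\eta|}}{|\eta|!}\,E^\sigma(\eta)-c,
\]
and simply observing that the first two summands are bounded above in $\eta$ (the first is eventually negative, the second tends to zero since $|\eta|!$ beats $|\eta|^2 e^{-\theta|\eta|}$), so a large enough $c$ works. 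This is cleaner than your split and avoids the imprecise comparison.
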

\begin{proof}
 We ought to show that all the conditions of Proposition
\ref{Apn} are met. Condition (i) is met by (\ref{A10}), (\ref{A11})
(case of $A^\dagger$), and by (\ref{A12}), (\ref{A13}) (case of
$B^\dagger$). The operators $S_0^\dag(t)$ mentioned in item (ii) act
as follows $(S_0^\dag(t)G)(\eta) = \exp(- t E^\sigma(\eta))
G(\eta)$. Hence, to prove that all the conditions mentined in (ii)
are  met we have to show the continuity of the map $t \mapsto
(S_0^\dag(t)G)(\eta) \in \mathcal{G}^{fac}_\theta$. That is, we have
to show that
\[
\int_{\Gamma_0} \bigg{(} 1 - \exp\left(-t E^\sigma (\eta)
\right)\bigg{)} |G(\eta)| e^{\theta|\eta|} |\eta|! \lambda (d \eta)
\to 0, \qquad {\rm as} \ t \to 0^{+},
\]
which obviously holds by the Lebesgue dominated convergence theorem
since $G\in \mathcal{G}_\theta^{fac}$. To check that
$B^\dagger:\mathcal{D}_\theta^{\dag} \to \mathcal{G}_\theta^{fac}$,
for $G\in \mathcal{D}^\dag_{\theta}\cap \mathcal{G}_0^+$, we apply
(\ref{Minlos1}) and obtain from (\ref{A5}) that
\begin{eqnarray}
\label{A15} & & \varphi_\theta^{fac}(B^\dag G) \\[.2cm] \nonumber & & \quad  = \frac{1}{2}
\int_{\Gamma_0} |\eta|! e^{\theta|\eta|} \bigg{(}\sum_{z\in \eta}
\int_{(\mathds{R}^d)^2} \psi_\sigma (z) c_1 (x,y;z)
G(\eta\cup\{x,y\}\setminus z) d x d y \bigg{)} \lambda (d \eta)
\qquad  \\[.2cm] \nonumber & &  \quad + \int_{\Gamma_0} |\eta|! e^{\theta|\eta|} \bigg{(}\sum_{y\in \eta}
\int_{\mathds{R}^d} \psi_\sigma (y) \tilde{c}_2 (x,y;\eta)
G(\eta\cup x \setminus y) d x  \bigg{)} \lambda (d \eta)
\qquad  \\[.2cm] \nonumber & & \quad = e^{-\theta} \int_{\Gamma_0} (|\eta|-1)! e^{\theta|\eta|}
\bigg{(} \sum_{\{x,y\}\subset \eta} \int_{\mathds{R}^d} \psi_\sigma
(z) c_1 (x,y;z) d z \bigg{)} G(\eta) \lambda ( d \eta) \qquad  \\[.2cm] \nonumber & &
\quad + \int_{\Gamma_0} |\eta|! e^{\theta|\eta|} \bigg{(}\sum_{x\in
\eta} \int_{\mathds{R}^d} \psi_\sigma (y) \tilde{c}_2 (x,y;\eta) dy
 \bigg{)} G(\eta)   \lambda (d \eta) \qquad  \\[.2cm] \nonumber & &
\quad \leq \max\{1; e^{-\theta}\} \int_{\Gamma_0} |\eta|!
e^{\theta|\eta|} E^\sigma (\eta) G(\eta) \lambda ( d \eta).
\end{eqnarray}
This yields $B^\dagger:\mathcal{D}_\theta^{\dag} \to
\mathcal{G}_\theta^{fac}$. In the same way, one shows that
\[
\varphi(B^\dagger G) = \int_{\Gamma_0} E^\sigma (\eta) G(\eta)
\lambda ( d \eta),
\]
that completes the proof of item (iii).

To prove that (iv) holds, by (\ref{A10}), (\ref{A5b}) and
(\ref{A15}) we obtain
\[
\varphi_\theta^{fac} \left((A^\dagger + B^\dagger)G \right) =
\int_{\Gamma_0} (|\eta|-1)! e^{\theta|\eta|} G(\eta) E_1^\sigma
(\eta) \left(e^{-\theta} -|\eta| \right)\lambda ( d\eta).
\]
Then the inequality in item (iv) can be written in the form
\[
- c G(\emptyset) + \int_{\Gamma_0}|\eta|! e^{\theta|\eta|}
W(c,\varepsilon;\eta) G(\eta) \lambda (d \eta) \leq 0,
\]
where $W(c,\varepsilon;\emptyset) = 0$ and
\begin{eqnarray}
  \label{A16}
W(c,\varepsilon;\eta)=  \left( \frac{e^{-\theta}}{|\eta|} -1 \right)
E^\sigma_1 (\eta) + \varepsilon \frac{e^{-\theta|\eta|}}{|\eta|!}
E^\sigma (\eta)  - c, \qquad |\eta|\geq 1 .
\end{eqnarray}
Since both first and second summands in (\ref{A16}) are bounded from above in
$\eta$, one can pick $c>0$ big enough to make
$W(c,\varepsilon;\eta)\leq 0$ for all $\eta\in \Gamma_0$. This
completes the  proof of the lemma.
\end{proof}
By (\ref{DensExt}) and (\ref{A9}) we have that
\[
|R_0^{\Lambda,N}|_{{\rm fac},\theta} = \varphi_\theta^{fac}
(R_0^{\Lambda,N}) \leq N! e^{\theta N} \varphi (R^\Lambda_{\mu_0}) =
N! e^{\theta N}.
\]
Hence, $R_0^{\Lambda,N} \in \mathcal{G}_\theta^{fac}$ for any
$\theta\in \mathds{R}$. By the same arguments we also have that
$|R_0^{\Lambda,N}|_0 \leq 1$. Then, for all $t>0$, we have that
\begin{equation}
  \label{A17}
R^{\Lambda,N}_t = S^\dag (t) R_0^{\Lambda,N} \in
\mathcal{G}_\theta^{fac} \cap \mathcal{G}_0^{+},
\end{equation}
and $|R^{\Lambda,N}_t|_0\leq 1$. Also, $R^{\Lambda,N}_t$ is a unique
classical solution of the problem in (\ref{A4}) with the initial
condition $R^{\Lambda,N}_0$. These facts follow by Lemma \ref{A1lm}.
Now we define $q^{\Lambda,N}_t$ by (\ref{LocalCorFun}), see also
(\ref{CorAndDens}). Then $q^{\Lambda,N}_t(\emptyset) =
|R^{\Lambda,N}_t|_0\leq 1$ and $q^{\Lambda,N}_t$ has  both (b) and
(c) properties of Proposition \ref{CorFunProp}. By (\ref{A4a}) and
then by (\ref{o9}) and (\ref{o2}) we conclude that
\begin{equation}
  \label{A18}
\langle\! \langle F^\omega, L^\dag F^{\Lambda,N}_t \rangle \!\rangle
= \langle\! \langle e(\omega;\cdot), L^{\Delta,\sigma}
q^{\Lambda,N}_t \rangle \!\rangle.
\end{equation}
The latter means that one can apply $L^{\Delta,\sigma}$ to
$q^{\Lambda,N}_t$ pointwise, and then calculate the integral with
$e(\omega;\cdot)$. At the same time, for each $\theta\in \mathds{R}$,
we have
\begin{gather}\label{QInGFac}
|q_t^{\Lambda, N}|_{\text{fac}, \theta} = \intGamN R_t^{\Lambda, N}
(\eta) \suml{\xi \subset \eta} e^{\theta |\xi|} |\xi|! \lambda(d\eta)\\[.2cm]
 = \intGamN R_t^{\Lambda, N} (\eta) \suml{k = 0}^{|\eta|} \frac{|\eta|!}{(|\eta| - k)!} e^{\theta k} \lambda(d\eta)
\leq e^{e^{- \theta}} |R_t^{\Lambda, N}|_{\text{fac}, \theta}.
\nonumber
\end{gather}
Keeping in mind (\ref{A18}) and (\ref{QInGFac}) let us define
$L^{\Delta,\sigma}$ in a given $\mathcal{G}^{fac}_\theta$. To this
end, we set, cf. (\ref{hD}),
\begin{equation}
  \label{A19}
\mathcal{D}^{fac}_\theta =\{ G \in \mathcal{G}_\theta^{fac}:
|\cdot|^2 G \in \mathcal{G}_\theta^{fac}\},
\end{equation}
and, see (\ref{S11}),
\begin{equation}
  \label{A20}
(A^{fac}G)(\theta) = - \Psi_\sigma (\eta) G(\eta), \qquad B^{fac} =
L^{\Delta,\sigma}- A^{fac}.
\end{equation}
As above, one shows that both $A^{fac}$ and $B^{fac}$ map
$\mathcal{D}^{fac}_\theta$ into $\mathcal{G}_\theta^{fac}$ that
allows for defining the corresponding unbounded operators
$A^{fac}_\theta = (A^{fac}, \mathcal{D}^{fac}_\theta)$,
$B^{fac}_\theta = (B^{fac}, \mathcal{D}^{fac}_\theta)$ and
$L^{{fac},\Delta,\sigma}_{\theta} = (A^{fac}+B^{fac},
\mathcal{D}^{fac}_\theta)$. Similarly as in the proof of Theorem
\ref{Th1} we can define the corresponding bounded operators acting
from $\mathcal{G}_{\theta'}^{fac}$ to $\mathcal{G}_{\theta}^{fac}$
for $\theta'>\theta$, see (\ref{A7a}). Their operator norms satisfy
\begin{equation}
\label{A19a}
    \|A^{fac}_{\theta \theta'}\| \leq \frac{2 \coMax }{(\theta' - \theta)^2
e^2}, \qquad
    \|B^{fac}_{\theta \theta'}\| \leq \frac{\frac{3}{2} \coMax e^{-\theta } + 2 e^{ e^{- \theta  }} \cTwo}{(\theta'  - \theta
)e}.
\end{equation}
By (\ref{A7a}) and (\ref{A19}) we also have that
\begin{equation*}
\forall \theta'> \theta \qquad \mathcal{G}^{fac}_{\theta'} \subset
\mathcal{D}^{fac}_{\theta}.
\end{equation*}
\begin{lemma}
  \label{A2lm}
For each $\theta\in \mathds{R}$, the function $t\mapsto
q^{\Lambda,N}_t\in \mathcal{D}^{fac}_{\theta} \subset
\mathcal{G}^{fac}_{\theta}$ defined in (\ref{LocalCorFun}) is a
unique global in time classical solution of the Cauchy problem
\begin{equation}
  \label{A21}
\frac{d}{dt}G_t = L^{{fac},\Delta,\sigma}_{\theta} G_t, \qquad
G_t|_{t=0} = q^{\Lambda,N}_0,
\end{equation}
with $R^{\Lambda,N}_0$ defined in (\ref{DensExt}).
\end{lemma}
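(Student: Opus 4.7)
The plan is to exploit the relation $q^{\Lambda,N}_t = KR^{\Lambda,N}_t$, where $(KR)(\eta):=\int_{\Gamma_0}R(\eta\cup\xi)\lambda(d\xi)$, together with the algebraic fact that $L^{\Delta,\sigma}\circ K = K\circ L^\dagger$, which is the pointwise version of the integrated duality (\ref{A18}). Existence, continuity, and the differential equation will then follow by transferring the corresponding properties of $R^{\Lambda,N}_t$ through $K$; uniqueness will be handled by the scale-of-Banach-spaces technique used for Theorem \ref{Th1}.

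First, I would establish a cardinality support bound. By the explicit form (\ref{A5}), the operator $L^\dagger$ evaluates $R$ at $\eta\cup\{x,y\}\setminus z$ (cardinality $|\eta|+1$), at $\eta\cup x\setminus y$ (cardinality $|\eta|$), and multiplies by $-E^\sigma(\eta)$; in particular it sends functions supported on $\{\eta:|\eta|\leq N\}$ into functions with the same support. Hence the semigroup action $R^{\Lambda,N}_t = S^\dagger(t)R_0^{\Lambda,N}$, with $R_0^{\Lambda,N}$ as in (\ref{DensExt}), preserves this support for all $t\geq 0$. Consequently $q^{\Lambda,N}_t(\eta)$ vanishes for $|\eta|>N$, the factor $|\cdot|^2$ in the definition (\ref{A19}) of $\mathcal{D}^{fac}_\theta$ is harmless, and $q^{\Lambda,N}_t\in\mathcal{D}^{fac}_\theta$ for every $\theta\in\mathds{R}$. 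Continuity of $t\mapsto q^{\Lambda,N}_t\in\mathcal{G}^{fac}_\theta$ is inherited from the $C_0$-continuity of $t\mapsto R^{\Lambda,N}_t$ in $\mathcal{G}^{fac}_\theta$ (Lemma \ref{A1lm}) via the linear estimate (\ref{QInGFac}) applied to $R^{\Lambda,N}_t - R^{\Lambda,N}_s$.

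The main computational step, which I expect to be the technically heaviest, is the pointwise identity
\[
\int_{\Gamma_0}(L^\dagger R)(\eta\cup\xi)\,\lambda(d\xi) \;=\; (L^{\Delta,\sigma}\,KR)(\eta), \qquad R\in\mathcal{G}^{fac}_\theta,
\]
applied to $R = R^{\Lambda,N}_t$. I would verify it term by term: substituting the three summands of (\ref{A5}) on the left and applying the Minlos identities (\ref{Minlos1}), (\ref{Minlos2}) to move the sums over $z\in\eta\cup\xi$ and $y\in\eta\cup\xi$ into the outer integration, one reorganizes the contributions into precisely $L^{\Delta,\sigma}_{11}+L^{\Delta,\sigma}_{12}+L^{\Delta,\sigma}_{13}+L^{\Delta,\sigma}_{14}$ and $L^{\Delta,\sigma}_{21}+L^{\Delta,\sigma}_{22}$ acting on $KR$. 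This is the pointwise analogue of the integrated identity (\ref{A18}) and proceeds by the same combinatorial bookkeeping as the derivation of $L^{\Delta,\sigma}$ from $L^\sigma$ via (\ref{o2}) and (\ref{o9}). Once this identity is in hand, differentiating (\ref{LocalCorFun}) under the $\lambda(d\xi)$-integral -- justified by the cardinality bound together with the classical differentiability of $R^{\Lambda,N}_t$ in $\mathcal{G}^{fac}_\theta$ -- gives $\tfrac{d}{dt}q^{\Lambda,N}_t = L^{\Delta,\sigma}q^{\Lambda,N}_t$ in $\mathcal{G}^{fac}_\theta$.

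For uniqueness, I would repeat the scheme of Section \ref{Section:ProofTh1} in the scale $\{\mathcal{G}^{fac}_\theta\}_{\theta\in\mathds{R}}$. The estimates (\ref{A19a}) have the same scale structure as (\ref{O10}), so the construction analogous to (\ref{SiQ})--(\ref{o25a}) yields resolving operators for (\ref{A21}) and a local uniqueness statement on $[0,T(\theta',\theta))$ for every $\theta'>\theta$. Since $q^{\Lambda,N}_t$ belongs to $\mathcal{G}^{fac}_{\theta'}$ for \emph{every} $\theta'\in\mathds{R}$ (by the cardinality bound established above), the argument can be restarted at $t_0 = T(\theta',\theta)/2$ in an enlarged scale and iterated, thereby extending uniqueness to all $t\geq 0$.
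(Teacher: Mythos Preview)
Your argument is correct and parallels the paper's proof closely: both transfer the properties of $R^{\Lambda,N}_t$ to $q^{\Lambda,N}_t$ through the $K$-transform, invoke the duality (\ref{A18}) (you state it pointwise as $L^{\Delta,\sigma}\circ K = K\circ L^\dagger$, the paper cites it in integrated form), and obtain uniqueness by the scale technique of Theorem~\ref{Th1} applied in $\{\mathcal{G}^{fac}_\theta\}$ via the estimates (\ref{A19a}).

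The one genuine difference is how you place $q^{\Lambda,N}_t$ into $\mathcal{D}^{fac}_\theta$. The paper uses that $S^\dagger$ leaves each $\mathcal{G}^{fac}_{\theta'}$ invariant (Lemma~\ref{A1lm}), so $R^{\Lambda,N}_t\in\mathcal{G}^{fac}_{\theta'}$ for \emph{every} $\theta'$; then (\ref{QInGFac}) and the embedding $\mathcal{G}^{fac}_{\theta'}\subset\mathcal{D}^{fac}_\theta$ for $\theta'>\theta$ finish. You instead observe that $L^\dagger$ cannot increase particle number, whence $R^{\Lambda,N}_t$ and $q^{\Lambda,N}_t$ remain supported on $\{|\eta|\leq N\}$, trivializing the $|\cdot|^2$ weight in (\ref{A19}). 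Your route is more elementary and model-specific (it exploits that the dynamics only merges or moves particles), while the paper's route reuses a structural fact already proved. One caveat: the step ``generator preserves $\{|\eta|\leq N\}$, hence so does the semigroup'' deserves a word --- it holds because $L^\dagger$ restricted to this closed subspace of $\mathcal{G}_0$ is \emph{bounded} (as $E^\sigma(\eta)\leq \tfrac12 c_1^{\max}N^2 + N\langle c_2\rangle$ there), so its bounded exponential coincides with $S^\dagger$ by uniqueness of classical solutions. For the uniqueness continuation, note that the cardinality bound is not actually needed: any competing solution lies in $\mathcal{G}^{fac}_\theta$ by hypothesis, and the local-uniqueness window depends only on $\theta$ and the auxiliary $\theta'<\theta$, so the restart is automatic without passing to ``an enlarged scale''.
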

\begin{proof}
The continuity and continuous differentiability of the function
$t\mapsto q^{\Lambda,N}_t\in \mathcal{G}^{fac}_{\theta}$ follow by
(\ref{A17}) and the fact that $S^\dag$ is a $C_0$-semigroup. The
inclusion $q^{\Lambda,N}_t\in \mathcal{D}^{fac}_{\theta}$ follows by
(\ref{QInGFac}) and the fact that $R^{\Lambda,N}_t\in
\mathcal{G}^{fac}_{\theta'}$ with an arbitrary $\theta'$, see
(\ref{A7a}) and (\ref{A20}). The fact that $q^{\Lambda,N}_t$
satisfies the first equality in (\ref{A21}) can be proved by means
of (\ref{A18}). Thus, it remains to prove the stated uniqueness.
Take any $\theta''>\theta'>\theta$ and consider the problem in
(\ref{A21}) in $\mathcal{G}^{fac}_{\theta'}$. Since the initial
condition $q_0^{\Lambda,N}$ lies in $\mathcal{G}^{fac}_{\theta''}$,
by means of the estimates in (\ref{A19a}) and the technique
developed for the proof of Theorem \ref{Th1} one can prove the
existence of a unique classical solution of the latter problem in
$\mathcal{G}^{fac}_{\theta'}$, on a bounded time interval. The
latter thus coincides with the one given in (\ref{LocalCorFun}),
which yields the uniqueness in question of this interval. Its
further continuation is performed by repeating the same arguments.
\end{proof}

\subsubsection{The common evolution}\label{Subsection:Spaces}

Our aim now is to show that (\ref{A3}) holds, which by
(\ref{LocalCorFun}) would yield the desired positivity of
$k_t^{\Lambda,N}$. A priori (\ref{A3}) does not make any sense as
$k_t^{\Lambda,N}$ and $q_t^{\Lambda,N}$ belong to different spaces
(and are not defined pointwise). The resolution consists in placing
$k_t^{\Lambda,N}$ and $q_t^{\Lambda,N}$ into a common subspace of
these two spaces, that is $\Usigma_\theta$ which we define now.

For $u:\Gamma_0\to \mathds{R}$, we set
\begin{equation*}
\|u\|_{\sigma, \theta} = \esssup_{\eta \in \Gamma_0} \frac{|u(\eta)|
e^{- \theta |\eta|}}{e(\psi_{\sigma};\eta)},
\end{equation*}
where $e(\psi_{\sigma};\eta)$ = $\prodl{x \in \eta} e^{- \sigma
|x|^2}$. Analogously as in \eqref{KNormEstimate}, we have
\begin{equation}\label{UNormEstimate}
    |u(\eta)| \leq  e^{\theta |\eta|} e(\psi_\sigma; \eta) \|u\|_{\sigma, \theta}.
\end{equation}
Then
\begin{equation}
  \label{A23}
\Usigma_\theta:=\{ u:\Gamma_0\to \mathds{R}: \|u\|_{\sigma,
\theta}<\infty\} \subset \mathcal{K}_{\theta}.
\end{equation}
By (\ref{UNormEstimate}) and (\ref{A7}), for $u \in \Usigma_\theta$,
we have that
\begin{eqnarray*}
    |u|_{\text{fac}, \theta'} & \leq & \intGamN e(\psi_\sigma; \eta) e^{(\theta + \theta') |\eta|} |\eta|! ||u||_{\sigma, \theta}
\lambda(d\eta)\\[.2cm] \nonumber
   &  = & \|u\|_{\sigma,
\theta} \sum\limits_{n=0}^{\infty} \left(e^{\theta + \theta'}
\intRd \psi_\sigma(x) dx \right)^n = \|u\|_{\sigma, \theta}
\sum\limits_{n=0}^{\infty} \left( e^{\theta + \theta'}
\left(\frac{\pi}{\sigma}\right)^{d/2}  \right)^n,
\end{eqnarray*}
which yields, cf. (\ref{A23}),
\begin{equation}\label{USubsetG}
    \mathcal{U}^\sigma_\theta \subset \mathcal{G}^{\text{fac}}_{\theta'}, \qquad {\rm for} \ \ \theta' < -\theta - \frac{d}{2}(\ln \pi - \ln \sigma).
\end{equation}
Since $k_0^{\Lambda, N} = q_0^{\Lambda, N}$, it belongs to $
\mathcal{U}^\sigma_{\alpha_0}$ and 
to $\mathcal{G}^{\text{fac}}_{\beta_0}$ with
\begin{equation}\label{BetaToAlphaRelation}
    \beta_0 < -\alpha_0 - \frac{d}{2}(\ln \pi - \ln \sigma).
\end{equation}
To prove the former, by \eqref{kLambdaN} we readily get
\[
   \|k_0^{\Lambda, N}\|_{\sigma, \theta} \leq \exp\left( \sigma N \sup_{y\in \Lambda} |y|^2 \right)  \|k_{\mu_0}\|_{\alpha_0}.
\]
Our aim now is to prove that both evolutions $q_0^{\Lambda,N} =
k_0^{\Lambda,N} \to k_t^{\Lambda,N}$ and $q_0^{\Lambda,N}  \to
q_t^{\Lambda,N}$ take place in $\mathcal{U}^\sigma_{\alpha_*}$. To this end, we define
$L^{\Delta,\sigma}$ in $\mathcal{U}^\sigma_{\theta}$ and split it
$L^{\Delta,\sigma}= A^\sigma+B^\sigma$, as we did in (\ref{S11}).
Then set
\begin{equation}
  \label{A24}
\mathcal{D}^\sigma_\theta =\{u \in \mathcal{U}^\sigma_\theta:
|\cdot|^2 u\in \mathcal{U}^\sigma_\theta\}\subset
\mathcal{D}_\theta,
\end{equation}
see (\ref{o10}) and (\ref{A23}). At the same time, similarly as in (\ref{USubsetG}) one shows that
\begin{equation}
\label{A24a} \mathcal{D}_\theta^\sigma \subset
\mathcal{D}^{fac}_{\theta'}, \quad {\rm for} \ \ \theta' < -\theta
-\frac{d}{2} (\ln \pi - \ln \sigma).
\end{equation}
Like above, one can show that $\mathcal{U}^\sigma_{\theta''}\subset
\mathcal{D}^\sigma_\theta$ whenever $\theta''<\theta$, cf.
(\ref{o11}). Thus, $A^\sigma: \mathcal{D}^\sigma_\theta \to
\mathcal{U}^\sigma_\theta$. Likewise, $B^\sigma:
\mathcal{D}^\sigma_\theta \to \mathcal{U}^\sigma_\theta$, and hence
we can define in $\mathcal{U}^\sigma_\theta$ the unbounded operators
$A^\sigma_{u,\theta} = (A^\sigma,\mathcal{D}^\sigma_\theta)$,
$B^\sigma_{u,\theta} = (B^\sigma,\mathcal{D}^\sigma_\theta)$ and
$L^{\Delta, \sigma}_{u,\theta} =
(A^\sigma+B^\sigma,\mathcal{D}^\sigma_\theta)$. Note that
$L^{\Delta, \sigma}_{u,\theta}$ satisfies, cf. (\ref{A24}),
\begin{equation}
  \label{A25a}
\forall u \in \mathcal{D}^\sigma_\theta \qquad L^{\Delta,
\sigma}_{u,\theta} u = L^{\Delta, \sigma}_{\theta} u,
\end{equation}
where the latter operator is the same as in (\ref{SigmaEv}).
Likewise, by (\ref{A24a}) we have
\begin{equation}
\label{A25b}
\forall u \in \mathcal{D}^\sigma_\theta \qquad L^{\Delta,
    \sigma}_{u,\theta} u = L^{fac,\Delta,\sigma}_{\theta'} u,
\end{equation}
with $\theta'$ satisfying the bound in (\ref{A24a}). That is,
$L^{\Delta, \sigma}_{u,\theta}= L^{\Delta,
\sigma}_{\theta}|_{\mathcal{D}_\theta^\sigma}$, that holds for each
$\theta\in \mathds{R}$; as well as, $L^{\Delta, \sigma}_{u,\theta}=
L^{fac,\Delta,\sigma}_{\theta'}|_{\mathcal{D}_\theta^\sigma}$,
holding for all $\theta$ and $\theta'$ satisfying (\ref{USubsetG}).

Let us now consider the problem
\begin{equation}
  \label{A25}
\frac{d}{dt} u_t = L^{\Delta, \sigma}_{u,\alpha_*} u_t, \qquad
u_t|_{t=0} = q_0^{\Lambda,N}.
\end{equation}
Its solution is to be understood according to Definition \ref{Odf}.
\begin{lemma}
  \label{A3lm}
Let $\alpha_*$ and $\alpha_0$ be as in Theorem \ref{Th1}, and then
$T(\alpha_*, \alpha_0)$ be as in (\ref{o26}). Then the problem in
(\ref{A25}) has a unique classical solution in
$\mathcal{U}^\sigma_{\alpha_*}$ on the time interval
$[0,T(\alpha_*,\alpha_0))$.
\end{lemma}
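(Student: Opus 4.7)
The plan is to carry out, in the scale $\{\Usigma_\theta\}_{\theta \in \mathds{R}}$, a verbatim transcription of the construction used in the proof of Theorem \ref{Th1}. The pivotal observation is that each summand of $L^{\Delta,\sigma}$ carries a factor of $\psi_\sigma$ attached to the point that is being \emph{removed} from $\eta$, and this factor exactly cancels the $\psi_\sigma^{-1}$ that appears in the estimate (\ref{UNormEstimate}) when the argument of $u$ differs from $\eta$ by such a removal. For example, for $u \in \Usigma_\theta$ one has
\[
|u(\eta \bs z \cup \{x,y\})| \leq e^{\theta(|\eta|+1)}\, e(\psi_\sigma; \eta)\, \frac{\psi_\sigma(x)\psi_\sigma(y)}{\psi_\sigma(z)}\, \|u\|_{\sigma,\theta},
\]
and the factor $\psi_\sigma(z)$ built into $L^{\Delta,\sigma}_{11}$ cancels the denominator; the remaining integral is bounded by $\cOne$ in view of the first line of (\ref{ModelAssumptions}) and $\psi_\sigma \leq 1$. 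The analogous cancellation takes place in $L^{\Delta,\sigma}_{12}$, $L^{\Delta,\sigma}_{13}$ and in the two jump summands (where the operator $Q_y$ is treated by the standard Lebesgue--Poisson computation using $1-e^{-\phi(y-u)}\leq \phi(y-u)$ together with $\psi_\sigma \leq 1$). For the multiplication operator $A^\sigma$ one uses $\Psi_\sigma \leq \Psi \leq \coMax |\eta|(|\eta|-1)/2$. Applying $xe^{-ax}\leq 1/(ae)$ and $x^2 e^{-ax}\leq 4/(ae)^2$ then yields norm estimates
\[
\|A^\sigma_{u,\theta'\theta}\| \leq \frac{2\coMax}{e^2(\theta'-\theta)^2},\qquad \|B^\sigma_{u,\theta'\theta}\|\leq \frac{\beta(\theta)}{e(\theta'-\theta)},
\]
identical in form to (\ref{O10}), with the same $\beta(\theta)$ as in (\ref{o26}).

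Once these bounds are in place, the remainder of the argument is a direct copy of (\ref{o21})--(\ref{o29}). I would introduce the multiplication operator $S^{\sigma,u}_{\theta'\theta}(t)u:=e^{-t\Psi_\sigma}u$ (continuous in $t$ by the analogue of (\ref{o22}) and mapping $\Usigma_\theta$ into $\mathcal{D}^\sigma_{\theta'}$ for $\theta'>\theta$), form the ordered products $\pi^{(n)}_{\alpha_*\alpha_0}(t,t_1,\dots,t_n)$ as in (\ref{o23}) with the partition (\ref{oo}), and sum the iterated integrals as in (\ref{o25a}) to obtain a family $Q^{\sigma,u}_{\alpha_*\alpha_0}(t):\Usigma_{\alpha_0}\to\mathcal{D}^\sigma_{\alpha_*}$ defined on $[0,T(\alpha_*,\alpha_0))$ and satisfying the analogue of (\ref{o28a}). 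Since $q_0^{\Lambda,N} = k_0^{\Lambda,N}$ lies in $\Usigma_{\alpha_0}$ (as recorded in the text preceding (\ref{BetaToAlphaRelation})), the function $u_t := Q^{\sigma,u}_{\alpha_*\alpha_0}(t)q_0^{\Lambda,N}$ is a classical solution of (\ref{A25}) in the sense of Definition \ref{Odf} adapted to the $\Usigma$-scale; termwise differentiation reproduces (\ref{o29}) without change.

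For uniqueness, I would appeal to the continuous embedding $\Usigma_{\alpha_*}\hookrightarrow\mathcal{K}_{\alpha_*}$ (which holds with constant one since $e(\psi_\sigma;\eta)\leq 1$): by (\ref{A25a}), any classical solution of (\ref{A25}) in $\Usigma_{\alpha_*}$ is automatically a classical solution of (\ref{SigmaEv}) in $\mathcal{K}_{\alpha_*}$ with the same initial datum $k_0^{\Lambda,N}$, and so the uniqueness half of Theorem \ref{Th1} (which applies \emph{mutatis mutandis} to the auxiliary model) forces it to coincide with the previously constructed $u_t$.

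The main obstacle is entirely the bookkeeping behind the first paragraph: one must inspect each of the seven summands of $L^{\Delta,\sigma}$ individually and verify that the $\psi_\sigma$-factors placed in the operator by (\ref{OperatorLSigma}) compensate precisely the $\psi_\sigma^{-1}$-factors produced by the $\Usigma_\theta$-norm, so that no smallness in $\sigma$ is lost in the estimates and the life-time $T(\alpha_*,\alpha_0)$ produced by the construction coincides with the one furnished by Theorem \ref{Th1}. Once this verification is in hand, every step of the proof of Theorem \ref{Th1} transposes to the present setting line by line.
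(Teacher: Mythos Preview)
Your proposal is correct and follows essentially the same route as the paper. The paper's proof consists precisely of verifying the norm estimate (\ref{A26}) for $L_{11}^{\Delta,\sigma}$ in the $\Usigma_\theta$-scale (exploiting exactly the $\psi_\sigma$-cancellation you describe), asserting the analogous bounds for the remaining summands, observing that the resulting operator norms for $(B^\sigma_u)_{\theta'\theta}$ match (\ref{O10}), and then declaring that the construction of Theorem~\ref{Th1} carries over verbatim. Your uniqueness argument via the embedding $\Usigma_{\alpha_*}\hookrightarrow\mathcal{K}_{\alpha_*}$ and (\ref{A25a}) is a clean shortcut; the paper simply says ``the proof follows in the same way as in the case of Theorem~\ref{Th1}'' without singling this out.
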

\begin{proof}
As in the case of Theorem \ref{Th1}, the present proof is based on
the following estimates of the summands of $L^{\Delta,\sigma}$, cf.
(\ref{O2}) and (\ref{O3}). By \eqref{UNormEstimate} and
\eqref{Minlos1} together with \eqref{Minlos2}, for $\theta' >
\theta$, we get
\begin{align}
\label{A26}
 \|L_{11}^{\Delta, \sigma} u\|_{\sigma, \theta'}
&\leq \esssup_{\eta \in \Gamma_0} \frac{e^{- \theta'|\eta|}}{e(\psi_\sigma; \eta)}
 \cdot \frac{1}{2} \intl{(\mathds{R}^d)^2} \suml{z \in \eta} \psi_\sigma(z) c_1(x,y;z) |u(\eta \bs z \cup \{x,y\})| dx dy
\\[.2cm] \nonumber
   &\leq \esssup_{\eta \in \Gamma_0} e^{- (\theta' - \theta)|\eta|} \cdot \frac{1}{2} e^\theta \|u\|_{\sigma, \theta}
 \intl{(\mathds{R}^d)^2} \suml{z \in \eta} c_1(x,y;z) \psi_\sigma(x) \psi_\sigma(y) dx dy
\\[.2cm] \nonumber
    &\leq  \frac{e^\theta \cOne}{2 e (\theta' - \theta)} \|u\|_{\sigma,
\theta}.
\end{align}
Similarly, one obtains, cf. (\ref{O2}), that $L_{12}^{\Delta,
\sigma}$ and $L_{13}^{\Delta, \sigma}$ satisfy (\ref{A26}), and
furthermore,  cf. (\ref{O4}),
\begin{eqnarray*}
\|L_{2}^{\Delta, \sigma} u\|_{\sigma, \theta'} \leq \frac{2 \langle
c_2 \rangle}{e(\theta'-\theta)}\exp\left(\langle \phi \rangle
e^\theta \right)  \|u\|_{\sigma}.
\end{eqnarray*}
By means of these estimates we define a bounded operator
$(B^\sigma_u)_{\theta'\theta}$ acting from
$\mathcal{U}^\sigma_\theta$ to $\mathcal{U}^\sigma_{\theta'}$. Its
norm satisfies the corresponding estimate in (\ref{O10}) with the
same right-hand side. Then the proof follows in the same way as in
the case of Theorem \ref{Th1}.
\end{proof}
\begin{corollary}
  \label{Aco}
For each fixed $\sigma>0$ and all $t<T(\alpha_*, \alpha_0)$, it
follows that $k^{\Lambda,N}_t=q^{\Lambda,N}_t$, and hence
$k^{\Lambda,N}_t$ satisfies (\ref{A2}) for all $G\in
\mathcal{B}_{bs}^*$.
\end{corollary}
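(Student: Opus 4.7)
The plan is to use the intermediate space $\mathcal{U}^\sigma_{\alpha_*}$ introduced in (\ref{A23}) as a bridge that sits inside both $\mathcal{K}_{\alpha_*}$ (the habitat of $k^{\Lambda,N}_t$) and, via (\ref{USubsetG}), inside $\mathcal{G}^{fac}_{\beta_*}$ for appropriate $\beta_*$ (the habitat of $q^{\Lambda,N}_t$). The initial condition $q^{\Lambda,N}_0 = k^{\Lambda,N}_0$ already lies in $\mathcal{U}^\sigma_{\alpha_0}$, as displayed right before Lemma \ref{A3lm}. Hence Lemma \ref{A3lm} produces a unique classical solution $u_t \in \mathcal{U}^\sigma_{\alpha_*}$ of (\ref{A25}) on $[0,T(\alpha_*,\alpha_0))$. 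The strategy is then to identify $u_t$ with both $k_t^{\Lambda,N}$ and $q_t^{\Lambda,N}$ by invoking uniqueness in each of the two ambient Banach scales.

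For the first identification, since $\mathcal{U}^\sigma_{\alpha_*}\hookrightarrow \mathcal{K}_{\alpha_*}$ with continuous embedding and $\mathcal{D}^\sigma_\theta \subset \mathcal{D}_\theta$, the relation (\ref{A25a}) shows that $t\mapsto u_t$, viewed as a trajectory in $\mathcal{K}_{\alpha_*}$, is a classical solution of (\ref{SigmaEv}) with initial datum $k_0^{\Lambda,N}$. The uniqueness part of Theorem \ref{Th1} (applied to the $\sigma$-model of subsection \ref{Subsection:AuxiliaryModel}) forces $u_t = Q^\sigma_{\alpha_*\alpha_0}(t)k_0^{\Lambda,N} = k_t^{\Lambda,N}$. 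For the second identification, pick any $\beta_*$ satisfying the bound in (\ref{USubsetG}) for $\theta=\alpha_*$; by (\ref{A24a}) and the continuity of the embedding $\mathcal{U}^\sigma_{\alpha_*}\hookrightarrow \mathcal{G}^{fac}_{\beta_*}$, the trajectory $t\mapsto u_t$ inherits continuity into $\mathcal{D}^{fac}_{\beta_*}$ and continuous differentiability into $\mathcal{G}^{fac}_{\beta_*}$. By (\ref{A25b}), this trajectory satisfies the Cauchy problem (\ref{A21}) in $\mathcal{G}^{fac}_{\beta_*}$ with initial datum $q_0^{\Lambda,N}$, and Lemma \ref{A2lm} (taken with $\theta = \beta_*$) gives $u_t = q_t^{\Lambda,N}$.

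Combining these two identifications yields (\ref{A3}) as an equality of elements of $\mathcal{U}^\sigma_{\alpha_*}$, and therefore as an equality of functions on $\Gamma_0$ up to $\lambda$-null sets. To finish, observe that $R_t^{\Lambda,N} = S^\dagger(t)R_0^{\Lambda,N}\geq 0$ because $S^\dagger$ is a stochastic (hence positivity-preserving) semigroup by Lemma \ref{A1lm}, and $R_0^{\Lambda,N}\geq 0$ by (\ref{DensExt}). A direct application of the Minlos identity preceding (\ref{Minlos1}), together with (\ref{LocalCorFun}) and the definition of $K$ in (\ref{o8}), gives
\[
\langle\!\langle G, q^{\Lambda,N}_t \rangle\!\rangle = \int_{\Gamma_0} (KG)(\eta)\, R^{\Lambda,N}_t(\eta)\, \lambda(d\eta) \geq 0
\]
for every $G\in \mathcal{B}_{bs}^*$, and (\ref{A2}) for $k^{\Lambda,N}_t$ follows from (\ref{A3}).

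The main obstacle I anticipate is the second identification: verifying that the $\mathcal{U}^\sigma_{\alpha_*}$-solution $u_t$ really inherits the regularity required by Definition \ref{Odf} in the weaker scale $\mathcal{G}^{fac}_{\beta_*}$. The parameter matching via (\ref{USubsetG}) and (\ref{A24a}) together with boundedness of the embedding should make this routine, but one must track the different values of $\theta$ carefully (there is a Gaussian-type price $\frac{d}{2}(\ln\pi-\ln\sigma)$ to pay for each embedding step), and ensure that the auxiliary scale in the uniqueness proof of Lemma \ref{A2lm} still fits inside the range where $u_t$ is defined.
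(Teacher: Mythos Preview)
Your proof is correct and follows essentially the same route as the paper: use Lemma \ref{A3lm} to produce the common solution $u_t$ in $\mathcal{U}^\sigma_{\alpha_*}$, then identify it with $k_t^{\Lambda,N}$ via (\ref{A25a}) and the uniqueness in the $\sigma$-version of Theorem \ref{Th1}, and with $q_t^{\Lambda,N}$ via (\ref{A25b}) and Lemma \ref{A2lm}. Your additional paragraph explicitly deriving (\ref{A2}) from the Minlos identity and the positivity of $S^\dagger$ is a useful clarification that the paper leaves implicit.
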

\begin{proof}
We take
\begin{equation}
 \label{A27a}
 \beta_* < - \alpha_* - \frac{d}{2}(\ln \pi - \ln \sigma),
\end{equation}
and obtain by (\ref{USubsetG}), (\ref{BetaToAlphaRelation}) and
(\ref{A23}) that
\begin{equation}
  \label{A28}
q_0^{\Lambda,N} = k_0^{\Lambda,N} \in
\mathcal{U}^\sigma_{\alpha_0} \subset \mathcal{G}^{fac}_{\beta_0}
\cap \mathcal{K}_{\alpha_0}, \qquad
 \mathcal{U}^\sigma_{\alpha_*} \subset \mathcal{G}^{fac}_{\beta_*}
\cap \mathcal{K}_{\alpha_*}.
\end{equation}
Then $k_t^{\Lambda,N} =
Q^\sigma_{\alpha_*\alpha_0}(t) k_0^{\Lambda,N}$ is a unique
classical solution of the problem in (\ref{SigmaEv}) with the
initial condition $k_0^{\Lambda,N}$. Let $u_t$,
$t<T(\alpha_*,\alpha_0)$ be the solution of the problem in
(\ref{A25}). Then the map $t\mapsto u_t\in
\mathcal{U}^\sigma_{\alpha_*} \subset \mathcal{K}_{\alpha_*}$, cf.
(\ref{A28}), is continuous and continuously differentiable in
$\mathcal{K}_{\alpha_*}$ as the corresponding embedding is
continuous. By (\ref{A25a}) $u_t$ satisfies also (\ref{SigmaEv})
with the initial condition $q_0^{\Lambda,N}=k_0^{\Lambda,N}$, and
hence $u_t = k^{\Lambda,N}_t =
Q^\sigma_{\alpha_*\alpha_0}(t)q_0^{\Lambda,N}$ in view of the
uniqueness of the solution of (\ref{SigmaEv}). Likewise, by
(\ref{A25b}) and Lemma \ref{A2lm} one proves that $u_t =
q_t^{\Lambda,N}$, $t<T(\alpha_*,\alpha_0)$, where $q_t^{\Lambda,N}$
is the unique solution of (\ref{A21}) in
$\mathcal{G}^{fac}_{\beta_*}$ with $\beta_*$ as in (\ref{A27a}).
\end{proof}

\subsubsection{Eliminating the approximations}\label{Subsection:LambdaNConv}

We recall that $k^{\Lambda,N}_t = Q^\sigma_{\alpha_*\alpha_0}(t)
k_0^{\Lambda,N}$ approximates $k^{\sigma}_t =
Q^\sigma_{\alpha_*\alpha_0}(t) k_0$ that solves (\ref{SigmaEv}) and
is mentioned in Lemma \ref{S1lm}. Our aim now is to eliminate this
approximation. By a cofinal sequence of compact subsets
$\{\Lambda_n\}_{n\in \mathds{N}}$ we mean a sequence which is ordered
by inclusion $\Lambda_n \subset \Lambda_{n+1}$ and exhaustive in the
sense that each $x\in \mathds{R}^d$ is eventually contained in its
element.
\begin{lemma}
  \label{A5lm}
 For each fixed $\sigma>0$, $t<T(\alpha_*, \alpha_0)$ and any
 cofinal sequence $\{\Lambda_n\}_{n\in \mathds{N}}$, it follows that
\begin{equation}\label{ConvNLamb}
\forall G\in   \bbs\qquad     \lim_{n \rightarrow \infty}\left(
\lim_{N \rightarrow \infty}\langle \!\langle G, k_t^{\Lambda_n,
N}\rangle \!\rangle\right) = \langle \!\langle G,
k_t^{\sigma}\rangle \!\rangle.
\end{equation}
\end{lemma}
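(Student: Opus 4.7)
The plan is to transfer the evolution $Q^{\sigma}_{\alpha_{*}\alpha_{0}}(t)$ from the correlation side to the observable side by duality, and then dispatch the two iterated limits by dominated convergence against a single integrable majorant. The construction of the predual evolution in Lemma \ref{Slm} depends only on the bounds in \eqref{O10}, \eqref{S15} and \eqref{o28a}, none of which degrade when the kernels are multiplied by $\psi_{\sigma}\le 1$; so the same construction yields a family $H^{\sigma}_{\alpha_{0}\alpha_{*}}(t)\colon \mathcal{G}_{\alpha_{*}}\to \mathcal{G}_{\alpha_{0}}$, $t<T(\alpha_{*},\alpha_{0})$, enjoying the duality
\[
 \langle\!\langle H^{\sigma}_{\alpha_{0}\alpha_{*}}(t) G, k\rangle\!\rangle = \langle\!\langle G, Q^{\sigma}_{\alpha_{*}\alpha_{0}}(t) k\rangle\!\rangle, \qquad G\in \mathcal{G}_{\alpha_{*}},\ k\in \mathcal{K}_{\alpha_{0}}.
\]
For $G\in \mathcal{B}_{bs}$, \eqref{BbsAndG} places $G$ in $\mathcal{G}_{\alpha_{*}}$, so $G_{t}:=H^{\sigma}_{\alpha_{0}\alpha_{*}}(t)G\in \mathcal{G}_{\alpha_{0}}$, and the claim \eqref{ConvNLamb} reduces to
\[
 \lim_{n\to\infty}\lim_{N\to\infty} \intGamN G_{t}(\eta)\, k^{\Lambda_{n},N}_{0}(\eta)\,\lambda(d\eta) \;=\; \intGamN G_{t}(\eta)\, k_{0}(\eta)\,\lambda(d\eta).
\]

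Next I analyse $k_{0}^{\Lambda,N}$ pointwise on $\Gamma_{0}$. From \eqref{DensExt} and \eqref{kLambdaN}, $k_{0}^{\Lambda,N}(\eta)=0$ unless $\eta\subset\Lambda$, and for $\eta\in\Gamma_{\Lambda}$
\[
 k_{0}^{\Lambda,N}(\eta)=\int_{\Gamma_{\Lambda}} R^{\Lambda}_{\mu_{0}}(\eta\cup\xi)\,\mathbf{1}_{|\eta\cup\xi|\le N}\,\lambda(d\xi).
\]
Since $R^{\Lambda}_{\mu_{0}}\ge 0$, monotone convergence together with \eqref{CorAndDens} gives $k_{0}^{\Lambda,N}(\eta)\uparrow k_{0}(\eta)$ as $N\to\infty$ for every $\eta\in\Gamma_{\Lambda}$. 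Hence, pointwise on $\Gamma_{0}$,
\[
 k_{0}^{\Lambda,N}(\eta)\ \xrightarrow[N\to\infty]{}\ k_{0}(\eta)\,\mathbf{1}_{\eta\subset\Lambda}, \qquad 0\le k_{0}^{\Lambda,N}(\eta)\le k_{0}(\eta)\le \|k_{0}\|_{\alpha_{0}}\, e^{\alpha_{0}|\eta|},
\]
where the last inequality uses $k_{0}\in\mathcal{K}_{\alpha_{0}}$ and the fact that the truncation can only decrease the integrand.

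The two limits now dispatch by dominated convergence with a common majorant. For the inner limit, $|G_{t}(\eta)k_{0}^{\Lambda,N}(\eta)|\le \|k_{0}\|_{\alpha_{0}}\,|G_{t}(\eta)|e^{\alpha_{0}|\eta|}$, which belongs to $L^{1}(\lambda)$ because $G_{t}\in\mathcal{G}_{\alpha_{0}}$; therefore
\[
 \lim_{N\to\infty}\langle\!\langle G_{t}, k_{0}^{\Lambda,N}\rangle\!\rangle = \intGamN G_{t}(\eta)\,k_{0}(\eta)\,\mathbf{1}_{\eta\subset\Lambda}\,\lambda(d\eta).
\]
For the outer limit, cofinality of $\{\Lambda_{n}\}$ ensures that each finite $\eta$ is eventually contained in $\Lambda_{n}$, so $\mathbf{1}_{\eta\subset\Lambda_{n}}\uparrow 1$ pointwise; the same majorant $\|k_{0}\|_{\alpha_{0}}|G_{t}(\eta)|e^{\alpha_{0}|\eta|}$ applies, and a second application of dominated convergence yields $\langle\!\langle G_{t},k_{0}\rangle\!\rangle$, which is $\langle\!\langle G, k_{t}^{\sigma}\rangle\!\rangle$ by duality. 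Both reductions are valid for every $t<T(\alpha_{*},\alpha_{0})$.

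The only step that is not purely routine is the construction of $H^{\sigma}_{\alpha_{0}\alpha_{*}}(t)$: it requires re-running the proof of Lemma \ref{Slm} with the $\sigma$-decorated generators $\hat{A}^{\sigma}$, $\hat{B}^{\sigma}$ derived from \eqref{OperatorLSigma}. Because $\psi_{\sigma}\le 1$, each bound used there — on $\hat{A}_{\theta\theta'}$, $\hat{B}_{\theta\theta'}$, $\hat{S}_{\theta\theta'}(t)$ and on the resolving series \eqref{OperatorHn} — carries over with the same right-hand side, so the duality statement analogous to \eqref{QHDUality} holds. Once this bookkeeping is in place, no further difficulty arises, and the proof is complete.
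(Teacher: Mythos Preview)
Your proof is correct and follows a genuinely simpler route than the paper. Both arguments begin the same way: pass to the predual via a $\sigma$-version of Lemma~\ref{Slm} and reduce the question to the convergence of $\langle\!\langle G_{t}, k_{0}^{\Lambda_n,N}\rangle\!\rangle$ to $\langle\!\langle G_{t}, k_{0}\rangle\!\rangle$ with $G_{t}\in\mathcal{G}_{\alpha_{0}}$. From here the paper splits the discrepancy as $J_{n}^{(1)}+J_{n,N}^{(2)}$, expands $J_{n,N}^{(2)}$ explicitly as a series tail over $m>N$ via \eqref{CorAndDens} and the Minlos identities, and handles $J_{n}^{(1)}$ by buying an extra $\epsilon$ of room (showing $G_{t}\in\mathcal{G}_{\alpha_{0}+\epsilon}$) followed by a finite-sum-plus-tail argument. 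You bypass all of this by exploiting the uniform pointwise bound $0\le k_{0}^{\Lambda,N}\le k_{0}\le \|k_{0}\|_{\alpha_{0}}e^{\alpha_{0}|\eta|}$ (which the paper itself records just after \eqref{kLambdaN}) together with the pointwise limits $k_{0}^{\Lambda,N}\uparrow k_{0}\mathbf{1}_{\Gamma_{\Lambda}}$ and $\mathbf{1}_{\Gamma_{\Lambda_{n}}}\uparrow 1$, then invoke dominated convergence twice against the single majorant $\|k_{0}\|_{\alpha_{0}}|G_{t}|e^{\alpha_{0}|\cdot|}\in L^{1}(\lambda)$. This is shorter and avoids the need for the $\epsilon$-shift; the paper's explicit expansions could in principle yield quantitative rates, but that is not used downstream. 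Your handling of the $H^{\sigma}$ construction is also slightly more careful than the paper's notation, which invokes \eqref{QHDUality} for $Q^{\sigma}$ without comment.
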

\begin{proof}
We mostly follow the line of arguments used in
\cite[Appendix]{Kawasaki}. Throughout the proof $\sigma>0$ will be
fixed. For $t<T(\alpha_*,\alpha_0)$, by \eqref{QHDUality} we have
\[
\langle\!\langle G, k_t^{\Lambda_n, N}\rangle \!\rangle =
\langle\!\langle H_{\alpha_* \alpha_0}(t) G , k_0^{\Lambda_n, N}
\rangle \!\rangle , \quad
 \langle\!\langle G, k_t^{\sigma} \rangle \!\rangle = \langle\!\langle H_{\alpha_* \alpha_0}(t)G, k_0)\rangle \!\rangle.
\]
Set $G_t = H_{\alpha_* \alpha_0}(t) G$, and then write
\begin{equation*}
  \delta(n,N):= \langle\!\langle G, k_t^{\sigma} \rangle \!\rangle -\langle\!\langle G, k_t^{\Lambda_n, N}\rangle
  \!\rangle = \langle\!\langle G_t, k_0 - k_0^{\Lambda_n, N} \rangle
  \!\rangle =:J^{(1)}_n + J^{(2)}_{n,N}.
\end{equation*}
Here
\begin{eqnarray}
  \label{A36}
J^{(1)}_n &= & \intGamN G_t(\eta) k_0(\eta) \left(1 -
\mathds{1}_{\Gamma_{\Lambda_n}}(\eta)\right) \lambda(d\eta),
    \\[.2cm] \nonumber
    J^{(2)}_{n,N} &= & \intGamN G_t(\eta) \left( k_0(\eta) \mathds{1}_{\Gamma_{\Lambda_n}}(\eta) - k_0^{\Lambda_n, N}(\eta) \right)
    \lambda(d\eta),
\end{eqnarray}
and $\mathds{1}_{\Gamma_{\Lambda_n}}$ is the indicator of
$\Gamma_{\Lambda_n}$. Let us prove that, for each $n$, $J^{(2)}_{n,N}\to 0$
as $N\to +\infty$. To this end we rewrite it in the following form
\begin{eqnarray}
\label{A40} J^{(2)}_{n,N} &= & \intGamN \Big[ G_t(\eta)
\intl{\Gamma_{\Lambda_n}} R_0^{\Lambda_n}(\eta \cup \xi)
I_{\Gamma_{\Lambda_n}}(\eta) \left( 1 -  I_N(\eta \cup \xi) \right)
\lambda(d\xi) \Big] \lambda(d\eta)
    \\[.2cm] \nonumber
&= & \intGamN G_t(\eta) \intGamN R_0^{\Lambda_n}(\eta \cup \xi)
I_{\Gamma_{\Lambda_n}}(\eta \cup \xi)\left(1 - I_N(\eta \cup \xi)
\right) \lambda(d\xi) \lambda(d\eta)
    \\[.2cm] \nonumber
&=& \intl{\Gamma_{\Lambda_n}} \suml{\xi \subset \eta} G_t(\xi)
R_0^{\Lambda_n}(\eta) \left( 1 - I_N(\eta) \right) \lambda(d\eta)
    \\[.2cm] \nonumber
&= & \suml{m = N+1}^\infty \frac{1}{m!} \intl{(\Lambda_n)^m}
R_0^{\Lambda_n}(\{x_1, \ldots, x_m\})\\[.2cm] \nonumber & \times & \suml{k = 0}^m \ \suml{\{i_1,
\ldots i_k \} \subset \{1, \ldots m \}} G_t^{(k)}(x_{i_1}, \ldots,
x_{i_k}) dx_1 \cdots dx_m,
\end{eqnarray}
where $I_N(\eta)=1$ whenever $|\eta|\leq N$ and $I_N(\eta)=0$
otherwise. By (\ref{CorAndDens}) for $k_0 \in
\mathcal{K}_{\alpha_0}$, it follows that
\[
R_0^\Lambda(\{y_1, \ldots, y_s\}) \leq k_0(\{y_1, \ldots, y_s\})
\leq e^{\alpha_0 s} \|k_0\|_{\alpha_0}.
\]
We apply this estimate in (\ref{A40}) and obtain
\begin{eqnarray}
\label{A41} 
|J^{(2)}_{n,N}| & \leq & \|k_0\|_{\alpha_0} \\[.2cm]\nonumber  & \times & \suml{m =
N+1}^\infty \frac{1}{m!} e^{\alpha_0 m} \intl{(\Lambda_n)^m} \suml{k
= 0}^m \ \suml{\{i_1, \ldots i_k \} \subset \{1, \ldots m \}}
|G_t^{(k)}(x_{i_1}, \ldots, x_{i_k})| dx_1 \cdots dx_m
    \\[.2cm] \nonumber
& \leq & \|k_0\|_{\alpha_0} \suml{m = N+1}^\infty \frac{1}{m!}
e^{\alpha_0 m} \suml{k = 0}^m \frac{m!}{k! (m-k)!}
\|G_t^{(k)}\|_{L^1\left((\Rd)^k\right)} |\Lambda_n|^{m-k},
\end{eqnarray}
where $|\Lambda|$ stands for the Lebesgue measure of $\Lambda$. The
sum over $m$ in the last line of (\ref{A41}) is the remainder of the
series
\begin{eqnarray*}
& & \suml{m = 0}^\infty \suml{k = 0}^m  \frac{e^{\alpha_0 k}}{k! }
\|G_t^{(k)}\|_{L^1\left((\Rd)^k\right)} \frac{e^{\alpha_0 (m -
k)}}{(m-k)!} |\Lambda_n|^{m-k}
    \\[.2cm]
 & & \quad    = \suml{k = 0}^\infty  \frac{e^{\alpha_0 k}}{k! } \|G_t^{(k)}\|_{L^1\left((\Rd)^k\right)}
  \suml{m = 0}^\infty  \frac{e^{\alpha_0 m}}{m!} |\Lambda_n|^{m}
    \\[.2cm] & &  \quad
    = |G_t|_{\alpha_0} \exp\Big( e^{\alpha_0} |\Lambda_n| \Big).
\end{eqnarray*}
Hence, by (\ref{A41}) we obtain that
\begin{equation*}
  \delta_n := \lim_{N\to +\infty} \delta (n,N) = J_n^{(1)}.
\end{equation*}
Then to complete the proof of (\ref{ConvNLamb}) we should show that
$\delta_n\to 0$. By (\ref{A36}) we have
\[
|J^{(1)}_n| \leq \suml{p=1}^\infty \frac{1}{p!}
\int\limits_{(\Rd)^p} |G_t^{(p)} (x_1, \ldots, x_p)| k_0^{(p)}(x_1,
\ldots, x_p) \suml{l=1}^p \mathds{1}_{\Lambda_N^c}(x_l) dx_1 \cdots
dx_p.
\]
Since $k_0 \in \mathcal{K}_{\alpha_0}$ and $G_t^{(p)}$ and $k_0^{(p)}$
are symmetric, we may rewrite the above estimate as follows
\[
|J^{(1)}_n| \leq \|k_0\|_{\alpha_0} \suml{p=1}^\infty \frac{p}{p!}
e^{\alpha_0 p} \int\limits_{\Lambda_n^c}\int\limits_{(\Rd)^{p-1}}
|G_t^{(p)} (x_1, \ldots, x_p)| dx_1 \cdots dx_p.
\]
For each $t<T(\alpha_*,\alpha_0)$, one finds $\epsilon>0$ such that
$t<T(\alpha_*+\epsilon,\alpha_0+\epsilon)$, see (\ref{o26}). We fix
these $t$ and $\epsilon$. Since $G_0 \in \bbs$, and hence $G_0 \in
\mathcal{G}_{\alpha_*+\epsilon}$, by Lemma \ref{Slm} we have that
$G_t \in \mathcal{G}_{\alpha_0+\epsilon}$. We apply the latter fact in the
estimate above and obtain
\begin{eqnarray}
  \label{A37}
|J^{(1)}_n|  & \leq &  \frac{\|k_0\|_{\alpha_0}}{e \epsilon }
\Delta_n,\\[.2cm] \nonumber
\Delta_n & := & \suml{p=1}^\infty \frac{1}{p!} e^{(\alpha_0+\epsilon)
p} \int\limits_{\Lambda_n^c}\int\limits_{(\Rd)^{p-1}} |G_t^{(p)}
(x_1, \ldots, x_p)| dx_1 \cdots dx_p.
\end{eqnarray}
Furthermore, for each $M\in \mathds{N}$, we have
\begin{gather}
  \label{A51}
  \Delta_n \leq \Delta_{n,M}^{(1)} + \Delta_{M}^{(2)},\\[.2cm]
  \nonumber
 \Delta_{n,M}^{(1)} :=  \suml{p=1}^M \frac{1}{p!} e^{(\alpha_0+\epsilon)
p} \int\limits_{\Lambda_n^c}\int\limits_{(\Rd)^{p-1}} |G_t^{(p)}
(x_1, \ldots, x_p)| dx_1 \cdots dx_p, \\[.2cm] \nonumber
\Delta_{M}^{(2)} := \suml{p=M+1}^\infty \frac{1}{p!}
e^{(\alpha_0+\epsilon) p} \int\limits_{(\Rd)^{p}} |G_t^{(p)} (x_1,
\ldots, x_p)| dx_1 \cdots dx_p,
\end{gather}
Fix some $\varepsilon>0$, and then pick $M$ such that
$\Delta^{(2)}_M < \varepsilon/2$, which is possible since
\[
\suml{p=1}^\infty \frac{1}{p!} e^{(\alpha_0+\epsilon) p}
\int\limits_{(\Rd)^{p}} |G_t^{(p)} (x_1, \ldots, x_p)| dx_1 \cdots
dx_p = |G_t|_{\alpha_0+\epsilon},
\]
as $G_t \in \mathcal{G}_{\alpha_0+\epsilon}$. At the same time,
\[
\forall p\in \mathds{N} \qquad g_p (x) :=
\int_{(\mathds{R}^d)^{p-1}}|G^{(p)}_t (x, x_2, \dots , x_p)|d x_2
\cdots dx_p \in L^1(\mathds{R}^d).
\]
Thus, since the sequence $\{\Lambda_n\}$ is exhausting, for $M$
satisfying $\Delta^{(2)}_M < \varepsilon/2$, there exists $n_1$ such
that, for $n
> n_1$, the following holds
\[
\Delta_{n,M}^{(1)}=  \suml{p=1}^M\frac{1}{p!} e^{(\alpha_0+\epsilon)
p} \int\limits_{\Lambda_n^c} g_p(x) dx < \frac{\varepsilon}{2}.
\]
By (\ref{A51}) this yields $\Delta_n<\varepsilon$ for all $n>n_1$,
which by (\ref{A37}) completes the proof.

\end{proof}
\subsubsection{The proof of Theorem \ref{Th2}} 

The proof will be done by showing that the solution $k_t$ described in Theorem \ref{Th1} has the three properties
mentioned in Proposition \ref{CorFunProp}. 
As discussed at the beginning of Section \ref{Section:ProofTh2}, 
$k_t$ surely has properties (a) and (b). By Corollary \ref{Aco} and Lemma \ref{A5lm} we have that $k_t^\sigma$ satisfies (\ref{o7})
for all $t<T(\alpha_*, \alpha_0)$ and $\sigma>0$. Then by Lemma \ref{S1lm} we get that $k_t$ also satisfies (\ref{o7}) for $t<T(\alpha_*, \alpha_0)/2$, that completes the proof. 

\paragraph{\bf
Acknowledgement} In 2016 and 2017, the research of both authors
under the present paper was supported in part by the European
Commission under the project STREVCOMS PIRSES-2013-612669. In 2018,
Yuri Kozitsky was supported by National Science Centre, Poland,
grant 2017/25/B/ST1/00051. All these supports are cordially
acknowledged by the authors.

\bibliographystyle{plain}

\end{document}